\newcommand{\pl}[1]{\foreignlanguage{polish}{#1}}
\newcommand{\ZZ}{\mathbb{Z}}
\newcommand{\RR}{\mathbb{R}}
\newcommand{\NN}{\mathbb{N}}
\newcommand{\CC}{\mathbb{C}}
\newcommand{\bfc}{\mathbf c}
\newcommand{\bfb}{\mathbf b}
\newcommand{\calP}{\mathcal{P}}
\newcommand{\calQ}{\mathcal{Q}}
\newcommand{\calR}{\mathcal{R}}
\newcommand{\calO}{\mathcal{O}}
\newcommand{\calI}{\mathcal{I}}
\newcommand{\calV}{\mathcal{V}}
\newcommand{\calM}{\mathcal{M}}
\newcommand{\calF}{\mathcal{F}}
\newcommand{\scoef}[2]{\frac{c_{#2} e^{\langle #1, #2\rangle}}{\kappa(#1)}}
\newcommand{\sprod}[2]{{\langle {#1}, {#2}\rangle}}
\newcommand{\norm}[1]{{\left\lvert {#1} \right\rvert}}
\newcommand{\mnorm}[1]{{\left\lVert {#1} \right\rVert}}
\newcommand{\abs}[1]{{\lvert {#1} \rvert}}
\newcommand{\der}[1]{\partial^{#1}}
\DeclareMathOperator{\Log}{Log}
\newcommand{\brho}[1]{\big(B_{x_0}^{#1} \rho\big)}
\newcommand{\btheta}{\big\lvert B_{x_0}^{1/2} \theta \big\rvert}
\newcommand{\dth}{{\: \rm d}\theta}
\newcommand{\cl}{\operatorname{cl}}
\newcommand{\conv}{\operatorname{conv}}
\newcommand{\GL}{\operatorname{GL}}
\newcommand{\Aff}{\operatorname{Aff}}
\newcommand{\supp}{\operatornamewithlimits{supp}}
\newcommand{\cspan}{\operatorname{\mathbb{C}-span}}
\newcommand{\rspan}{\operatorname{\mathbb{R}-span}}
\newcommand{\zspan}{\operatorname{\mathbb{Z}-span}}
\newcommand{\dist}{\operatorname{dist}}
\newcommand{\bpi}{\boldsymbol{\pi}}
\renewcommand{\atop}[2]{\substack{{#1}\\{#2}}}
\newcounter{thm}
\newtheorem{theorem}{Theorem}
\newtheorem{proposition}{Proposition}[section]
\newtheorem{lemma}{Lemma}
\newtheorem{corollary}{Corollary}
\newtheorem{claim}{Claim}
\newtheorem*{theorem*}{Theorem}
\newtheorem{main_theorem}[thm]{Theorem}
\theoremstyle{definition}
\newtheorem{definition}{Definition}
\newtheorem{remark}{Remark}
\begin{document}
\selectlanguage{english}

\title[Heat kernels and Green functions]
{Asymptotic behavior of heat kernels and Green functions on affine buildings}

\author{Bartosz Trojan}
\thanks{The research was partial supported by the National Science Centre, Poland, Grant 2016/23/B/ST1/01665}

\address{
	\pl{
	Bartosz Trojan\\
	The Institute of Mathematics\\
	Polish Academy of Sciences\\
	ul. \'Sniadeckich 8\\
	00-696 Warszawa\\
	Poland}
}

\curraddr{
	\pl{
    Wydzia\l{} Matematyki\\
	Politechnika Wroc\l{}awska\\
	Wyb. Wyspia\'{n}skiego 27\\
	50-370 Wroc\l{}aw\\
	Poland}
}
\email{bartosz.trojan@gmail.com}


\begin{abstract}
	We obtain asymptotic formulas for the transition densities
	$p(n; x,y)$ of finite range isotropic random walks on affine buildings.
	We also describe the asymptotic behavior of the corresponding Green functions.
\end{abstract}

\keywords{affine building, random walk, transition density, Green function, asymptotic formula}

\subjclass[2010]{Primary: 33C52, 42C05, 51E24, 60B15, 60J10. 
		Secondary: 20E42, 20F55, 22E35, 31C35, 43A90, 60G50.}

\maketitle

\section{Introduction}
\label{sec:1}
Solving the heat equation led to develop at least two fundamental tools in modern mathematics, namely
the Fourier transform and the heat kernel. Harmonic analysis, i.e. the mathematical study of the
Fourier transform, is one of the main tools of this paper, while the heat kernel is its main object
of study. The latter can be constructed in the context of Riemannian geometry, see e.g. \cite{BGV}, leading to a deep
interplay between the analytic behavior of the heat kernel and the geometric properties of
the considered manifold. The better understood the ambient manifold, the more precise the information
on the heat kernel is expected. This paper is dedicated to studying the kernels on some singular spaces,
called affine buildings, in tight connection with Lie theory, more precisely with non-Archimedean Lie groups
\cite{BruhatTits1972}. The probabilistic viewpoint is systematically considered since it is particularly well adapted
to these singular spaces with strong symmetry properties. Some related results on associated Green functions are also
derived. The latter has a deep connection with potential theory, see e.g. \cite{Doob2001}.

In order to better motivate our study, let us first consider the case of Riemannian symmetric spaces of non-compact
type, called symmetric spaces for short, since affine buildings are the non-Archimedean counterparts to 
the latter manifolds; both situations are complementary pieces at the heart of Lie theory \cite{Helgason1979, Tits1977}.
On a symmetric space the heat kernel $h_t$ for the heat semigroup 
$e^{t\Delta}$ where $\Delta$ is the Laplace--Beltrami operator is a basic and well-studied object.
Estimates as well as asymptotic of $h_t$ play a fundamental r\^ole in studying geometry of the underlying space. 
Initial studies of $h_t$ were carried out by Sawyer \cite{saw1, saw2, saw3} and Anker 
\cite{a2, a3}. In \cite{aj} Anker and Ji proved sharp estimates on the kernel $h_t(x)$ whenever 
$\norm{x}$ is smaller than some constant multiple of $1 + t$. Global estimates were subsequently 
found by Anker and Ostellari \cite{ao2, ao}. These results have important applications. One is
to determine the behavior of the Green function which is the analytic input needed to describe
the Martin boundary \cite{gjt, gu}. In \cite{gjt} Guivarc'h, Ji and Taylor used results obtained
in \cite{aj} to construct the Martin compactification of symmetric spaces.

Apart from strong topological differences, real and non-Archimedean simple Lie groups share many combinatorial
and geometric properties. From the geometric point of view they both act, with strong transitivity properties, on
contractible spaces carrying nice non-positively curved complete distances. In the real case, these are of course
the symmetric spaces. The corresponding spaces for groups over totally disconnected local fields are \emph{Bruhat--Tits
buildings}. Bruhat--Tits buildings, or more generally \emph{affine buildings} when no group is assumed to act transitively 
on them, are unions of Euclidean tilings, called apartments, playing the r\^ole of maximal flats in symmetric spaces. 
Apartments contain Weyl cones, also called sectors, in which, thanks to a polar decomposition of the group,
the behavior of the heat kernel is suitably described. 

To get a deeper and more precise understanding of symmetric spaces many authors have
studied the corresponding problems on appropriate graphs. In this context, Guivarc'h, Ji and 
Taylor emphasize the importance of extending all the compactification procedures to Bruhat--Tits
buildings associated with reductive groups over $p$-adic fields. The group-theoretic part of
this program has been carried out by Guivarc'h and R\'emy in \cite{gure}. A basic important problem
raised in \cite{gjt} is to describe the asymptotic behavior of the Green function of a finite
range isotropic random walk on a Bruhat--Tits building. One way to understand the Green function is to
obtain the asymptotic formula for the corresponding heat kernel which in this context is the $n$'th 
iteration $p(n; \cdot, \cdot)$ of the transition operator given by the transition density $p$
(see Section \ref{sec:4.1} for definitions).

In the present paper we obtain the uniform asymptotic formula for the heat kernel $p(n; \cdot, \cdot)$,
giving a definitive answer to the question posed in \cite{gjt}. The heart of the matter is a 
detailed description of the off-diagonal behavior of $p(n; x, y)$. We achieve this for all affine buildings
in particular those with small or possibly trivial automorphism group. There has been considerable work done giving
on-diagonal estimates, i.e for $p(n; x, x)$ (see \cite{carwoe,park1} for instance). However, let us emphasize that one
needs to understand the off-diagonal part of the heat kernel $p(n; x, y)$ for the Green function asymptotic. Moreover,
the off-diagonal estimates can not be deduced from the on-diagonal results. We are able to establish
the asymptotic formula for $p(n; x, y)$, uniformly in a region asymptotically approaching the building-theoretic
analog of Cram\'er's zone which we denote by $\calM$. The main result of the paper is Theorem \ref{thm:4}, see also
Corollary \ref{cor:1} for its weaker version which is good enough for most applications. The asymptotic behavior of
the Green function is described in Theorems \ref{thm:6} and \ref{thm:7}. The Martin compactification
of affine buildings is the subject of the forthcoming paper \cite{RemyTrojan}.

Random walks on affine buildings have been studied for over thirty years. In 1978 Sawyer 
\cite{saw} obtained the asymptotic of $p(n; x, x)$ for homogeneous trees, i.e. affine buildings 
of type $\tilde{A}_1$. This is called \emph{Local Limit Theorem}. The result was 
extended to $\widetilde{A}_r$ by Tolli \cite{tol}, Lindlbauer and Voit \cite{linv} and Cartwright
and Woess \cite{carwoe}. Lastly, Local Limit Theorems for all affine buildings were proved by
Parkinson \cite{park1}.

Local Limit Theorems describe the behavior of $p(n; x, y)$ for fixed $x$ and $y$. However, in many applications it is 
desired to know the uniform asymptotic behavior in a large spacetime regime. For affine buildings it 
was previously studied in two cases only. For homogeneous
trees, uniform asymptotic were found by Lalley \cite{lal1,lal2}. For affine buildings of higher rank, 
the first results were obtained by Anker, Schapira and the author in \cite{ascht} where for each building of
type $\tilde{A}_r$, a \emph{distinguished} averaging operator was studied. We obtain sharp upper and lower bounds
on $p(n; x, y)$. In this paper we treat all affine buildings.

To be more precise, we need to introduce some notation. Let $\Phi$ be the type of the building, that is $\Phi$
is the affine root system in $\mathfrak{a}$, where $\mathfrak{a}$ is the Euclidean space on which apartments are modeled,
and let $\Phi^{++}$ denote the set of indivisible positive roots in $\Phi$.
By $W_0$ we denote the corresponding (spherical) Weyl group. Given a transition function $p$ of the isotropic finite
range random walk on good vertices $V_P$ of the building, we define the corresponding averaging operator acting on
functions on $V_P$ as
\[
	A f(x) = \sum_{y \in V_P} p(x, y) f(y).
\]
Then the Gelfand--Fourier transform of $A$, denoted as $\hat{A}$, is a $W_0$-invariant exponential polynomial
expressed as a combination of Macdonald spherical function $P_\omega$. The Cram\'er's zone $\mathcal{M}$ is the interior of
the convex hull in $\mathfrak{a}$ of the support of $\hat{A}$. For $\delta \in \mathcal{M}$, we set
\[
	\phi(\delta) = \max\big\{\sprod{x}{\delta} - \log \kappa(x) :  x \in \mathfrak{a} \big\}
\]
where $\kappa = \varrho^{-1} \hat{A}$ and $\varrho$ is the spectral radius of the random walk. We also need a quadratic
form on $\mathfrak{a}$ given by $B_0(u, u) = D_u^2 \log \kappa(0)$. Let us recall that each apartment of affine building
contains as a discrete subspace the coweight lattice of the root system $\Phi_0$, so the statement below completely
describes the process in the building.
\begin{main_theorem}
	\label{thm:A}
	Let $(\omega_n : n \in \NN)$ be a sequence of co-weights such that the sphere centered at $O$ and 
	radius $\omega_n$ is contained in the support of $p(n; O, \cdot\,)$. Suppose that 
	\[
		\lim_{n \to \infty} \sprod{\delta_n}{\alpha} = 0, \quad\text{for all}\quad \alpha \in \Phi
	\]
	where $\delta_n = n^{-1} \omega_n$. Then for any sequence of good vertices $(x_n : n \in \NN)$ such that
	the Weyl distance between $O$ and $x_n$ equals $\omega_n$, we have
	\[
		p(n; O, x_n) = n^{-\frac{r}{2}-\abs{\Phi^{++}}} P_{\omega_n}(0) \varrho^n e^{-n\phi(\delta_n)}
		\Big(C_0 + \calO\big(\norm{\delta_n}\big) + \calO\big(n^{-1}\big) \Big).
	\]
	The constant $C_0$ is absolute.
\end{main_theorem}
Theorem \ref{thm:A} is the direct consequence of a more general Theorem \ref{thm:4} which contains the detailed 
description of the asymptotic behavior of $p(n; \, x , y)$. We emphasize that the main difficulties lay in the
asymptotic analysis along the walls of the Weyl chamber when $\delta_n$ approaches the boundary of $\calM$.
Using Theorem \ref{thm:4}, we study the asymptotic behavior of the Green function $G_{\zeta}$ for
$\zeta \in (0, \varrho^{-1}]$. In particular, at the bottom of the spectrum, we obtain the following asymptotic formula
for the Green function $G_{\varrho^{-1}}$, see Theorem \ref{thm:7}. For the detailed study of the Green function
above the bottom of the spectrum see Theorem \ref{thm:6}.
\begin{main_theorem}
	For all $x \in V_P$, so that the Weyl distance between $O$ and $x$ equals $\omega$, we have
	\[
		G_{\varrho^{-1}}(O, x) = 
		P_{\omega}(0) \big(B_0^{-1}(\omega, \omega)\big)^{-\frac{r}{2}-\abs{\Phi^{++}}+1}
		\big(D_0 + o(1) \big),
	\]
	as $\norm{\omega}$ tends to infinity. The constant $D_0$ is absolute.
\end{main_theorem}
This paper is analytic in its nature as far as the tools of the proofs are concerned. However there are strong connections 
with Lie combinatorics associated to parametrization of representations. This is a well-known phenomenon in the field,
illustrated for instance by the case of the Fourier transform on non-Archimedean Lie groups. The starting point in
spherical harmonic analysis is to exhibit a suitable Gelfand pair: this was done by Satake in the 60’s who also showed
a combinatorial parametrization of the spherical functions providing the desired Fourier transform \cite{Satake1963}. 
The exact computation of the latter functions was achieved by Macdonald, leading to an explicit description of the
involved Plancherel measure \cite{macdo0}. The situation is so well understood now that it can be made completely
geometric, i.e. without any use of group action. The importance of the geometric approach is important since not every
affine building corresponds to a group of $p$-adic type. For exotic buildings of type $\tilde{A}_2$, Cartwright and
\pl{Młotkowski} \cite{CarMlo} proposed a construction of the spherical Fourier transform using the geometric and
combinatorial properties of the building. This approach was extended by Cartwright \cite{car} to buildings of type
$\tilde{A}_r$ and by Parkinson \cite{park2} to all affine buildings.

Let us now give a brief sketch of the proof of Theorem \ref{thm:4}. As usual, an application of the spherical
Fourier transform results in an oscillatory integral. Thanks to some geometric properties of the
support of the spherical Fourier transform of $p$, see Theorem \ref{thm:2}, the integral can be localized to
$\{\theta \in \mathfrak{a} : \norm{\theta} \leq \epsilon \}$. Therefore, the proof reduces to establishing
the asymptotic behavior, as $n$ approaches infinity, of
\[
	F_n(x) = \int\limits_{\norm{\theta} \leq \epsilon} 
	e^{n \varphi(x, \theta)} \frac{{\: \rm d} \theta}{\bfc(x + i \theta)},
\]
uniformly with respect to $x \in \cl \mathfrak{a}_+$ where $\mathfrak{a}_+$ is the Weyl chamber of
the underlying root system, and $\bfc$ is the non-Archimedean counterpart of Harish-Chandra $\bfc$-function. 
The function $\varphi$ is related to $\kappa$, for the definition we refer to \eqref{eq:62}. This paper 
relies on the study of oscillatory integrals in a uniform manner, and its core is contained in Theorem \ref{thm:3}
where the asymptotic behavior of $F_n$ along the walls is investigated. We use a variant of the steepest descent method.
However, there is an interplay between the time $n$ and the distance of $x$ to the walls.
Therefore, to identify the leading terms we need to utilize combinatorics of subroot systems.
In fact, if $x$ lies on a certain wall of $\mathfrak{a}_+$ then the function 
$\varphi(x, \cdot)$ retain symmetries in the directions orthogonal to that wall. Close to the wall
we take advantage of this by expanding $F_n$ into power series and using combinatorial methods
we identify remaining cancellations. In \cite{ascht}, a key combinatorial formula available for a
distinguished averaging operator allowed to avoid the difficult analysis of cancellations.

\subsection{Organization of the paper}
\label{subsec:1.1}
In Section \ref{sec:2.1} we present the definition of a function $s$ which underpins all
estimates for $p(n; \cdot, \cdot)$. We next prove two auxiliary lemmas: one analytic and
one combinatorial. In Section \ref{sec:4} the definitions of root systems and
affine buildings are recalled, and a number of spherical-analytic facts used across the paper are
collected. The main theorem is stated and proved in Section \ref{sec:4.4}. As an application,
the asymptotic behavior of the corresponding Green function are found (Theorem \ref{thm:6} and Theorem \ref{thm:7}).

We use the convention that $C, C', c, c', \ldots$ stands for a generic positive constant whose value can change
from line to line.

\subsection*{Acknowledge}
The author expresses his gratitude to Jean-Philippe Anker, Jacek \pl{Dziubański}, Bertrand R\'emy, Tim Steger,
and Ryszard Szwarc for extensive discussions, comments and support.

\section{Combinatorial and analytic preliminaries}
\label{sec:3}
This section contains most of the preliminaries necessary to the technical arguments used in this paper. 
This explains why it is varied in nature. The first subsection is dedicated to convex combinations of exponentials
in Euclidean spaces; they appear naturally in the study of random walks in spaces governed by Lie-theoretic data.
The optimization problem leads to defining a function providing asymptotic directions of random walks.
The second subsection is dedicated to multiple derivation while the last one contains a variation of the marriage
lemma useful later to handle root system combinatorics.

\subsection{Convex combinations of exponentials and the function s}
\label{sec:2.1}
Let $\mathfrak{a}$ be a $r$-dimensional real vector space with an inner product
$\sprod{\cdot}{\cdot}$. By $\mathfrak{a}_{\mathbb{C}}$ we denote its complexification. 
We fix a finite set of vectors $\mathcal{V} \subset \mathfrak{a}$ and a set of positive constants
$\{c_v: v \in \mathcal{V}\}$ satisfying $\sum_{v \in \mathcal{V}} c_v = 1$. Let
$\kappa: \mathfrak{a}_{\mathbb{C}} \rightarrow \mathbb{C}$ be a function given by the formula
\[
	\kappa(z) = \sum_{v \in \mathcal{V}} c_v e^{\sprod{z}{v}}.
\]
The motivation to study the function $\kappa$ comes from random walks. It is ultimately connected to the
Gelfand--Fourier transform of the corresponding averaging operator, see Section \ref{sec:4.1} for details.

For $x \in \mathfrak{a}$, by $B_x$ we denote a quadratic form $B_x(u, u) = D_u^2 \log \kappa(x)$ where $D_u$ is
the derivative along a vector $u$, i.e.,
\[
	D_u f(x) = \left. \frac{{\rm d}}{{\rm d} t} f(x + t u) \right|_{t = 0}.
\]
Since
\[
	D_u \log \kappa(x) = \sum_{v \in \calV} \scoef{x}{v} \sprod{u}{v},
\]
and
\[
	D_u \bigg(\frac{c_v e^{\sprod{x}{v}}}{\kappa(x)} \bigg) =
	\scoef{x}{v}\sprod{u}{v} - \sum_{v' \in \calV} \scoef{x}{v} \cdot \scoef{x}{v'} \sprod{u}{v'},
\]
we may write
\begin{equation}
	\label{eq:1}
	B_x(u,u) = 
	\frac{1}{2} \sum_{v, v' \in \mathcal{V}} \scoef{x}{v} \cdot \scoef{x}{v'} \sprod{u}{v-v'}^2.
\end{equation}
Let $\mathcal{M}$ be the interior of the convex hull of $\mathcal{V}$. We assume that $\mathcal{M}$
is \emph{not empty}. For the sake of completeness we provide the proof of the following well-known 
theorem.
\begin{theorem}
	\label{thm:1}
	For every $\delta \in \mathcal{M}$ a function $f(\delta,\cdot): \mathfrak{a} \rightarrow
	\mathbb{R}$ defined by
	$$
	f(\delta, x) = \sprod{x}{\delta} - \log \kappa(x)
	$$
	attains its maximum at the unique point $s \in \mathfrak{a}$ satisfying
	$\nabla \log \kappa(s) = \delta$.
\end{theorem}
\begin{proof}
	Without loss of generality, we may assume that $\nabla \kappa(0)=0$. Indeed, otherwise we will consider
	\[
		\tilde{\kappa}(z) = e^{-\sprod{z}{v_0}} \kappa(z) = \sum_{v \in \tilde{\calV}} c_{v + v_0} e^{\sprod{z}{v}}
	\]
	where $v_0 = \nabla \kappa(0)$, and $\tilde{\calV} = \calV - v_0$. Then $\tilde{\calM}$, the interior of the
	convex hull of $\tilde{V}$, equals $\calM - v_0$. For $\tilde{\delta} = \delta - v_0$, we have
	\[
		\tilde{f}(\tilde{\delta}, x) = \sprod{x}{\delta - v_0} - \log \tilde{\kappa}(x) 
		= \sprod{x}{\delta} - \log \kappa(x) = f(\delta, x).
	\]
	We conclude that if $s$ is the unique maximum of $\mathfrak{a} \ni x \mapsto \tilde{f}(\tilde{\delta}, x)$,
	then it is also the unique maximum of $\mathfrak{a} \ni x \mapsto f(\delta, x)$. Because
	\[
		\nabla \log \tilde{\kappa}(x) = \nabla \log \kappa(x) - v_0,
	\]
	we get $\nabla \log \kappa(s) = \tilde{\delta} - v_0 = \delta$, proving the claim.

	Fix $\delta \in \calM$. Since $\nabla \kappa(0) = 0$, by Taylor's theorem we have
	\[
		f(\delta, x) =  \sprod{x}{\delta} + \mathcal{O}(\norm{x}^2)
	\]
	as $\norm{x}$ approaches zero. Moreover, for any $x, u \in \mathfrak{a}$,
	\[
		D_u^2 f(\delta, x) = -B_x(u, u),
	\]
	thus the function $\mathfrak{a} \ni x \mapsto f(\delta, x)$ is strictly concave.

	Let us observe that 
	\[
		0 = \nabla \kappa(0) = \sum_{v \in \calV} c_v \cdot v \in \cl \calM.
	\]
	Since $\calM$ is not empty, the set $\calV$ cannot be contained in an affine hyperplane, thus, $0 \in \calM$.
	
	Now, $\delta \in \calM$ implies that there are $v_1, \ldots, v_r \in \partial \calM \cap \calV$ such that $\delta$
	belongs to the convex hull of $\{0, v_1, \ldots, v_r\}$, i.e. there are $t_0, t_1, \ldots t_r \in [0, 1]$
	satisfying
	\[
		\delta = t_0 \cdot 0 + \sum_{j=1}^r t_j \cdot v_j = \sum_{j=1}^r t_j \cdot v_j.
	\]
	Because $\delta \not \in \partial \calM$ we must have $t_0 > 0$, thus $\sum_{j = 1}^r t_j < 1$. Hence,
	\[
		\sum_{j=1}^r t_j \log \kappa(x) 
		\geq \sum_{j=1}^r t_j \big(\log c_{v_j} + \sprod{x}{v_j}\big)
		=\sum_{j=1}^r t_j \log c_{v_j} + \sprod{x}{\delta},
	\]
	and we get
	\begin{equation}
		\label{eq:69}
		f(\delta, x) = 
		\sprod{x}{\delta} - \log \kappa(x)
		\leq \Big(\sum_{j=1}^r t_j - 1\Big) \log \kappa(x) - \sum_{j=1}^r t_j \log c_{v_j}.
	\end{equation}
	Because
	\[
		\lim_{\norm{x} \to \infty} \log \kappa(x) = +\infty,
	\]
	the estimate \eqref{eq:69} implies that
	\[
		\lim_{\norm{x} \to \infty} f(\delta, x) = -\infty,
	\]
	and the proof is finished.
\end{proof}

In this article, for a given $\delta \in \calM$, we denote by $s \in \mathfrak{a}$ the unique solution to
\begin{equation}
	\label{eq:34}
	\delta = \nabla \log \kappa(s) = \sum_{v \in \calV} \frac{c_v e^{\sprod{s}{v}}}{\kappa(s)} \cdot v.
\end{equation}
Let $\phi: \mathcal{M} \rightarrow \mathbb{R}$ be defined by
\[
	\phi(\delta) = \max\big\{\sprod{x}{\delta} - \log \kappa(x) : x \in \mathfrak{a}\big\},
\]
thus, by Theorem \ref{thm:1},
\[
	\phi(\delta) = \sprod{\delta}{s} - \log \kappa(s).
\]
By \eqref{eq:34}, for any $u \in \mathfrak{a}$,
\[ 
	\sprod{\delta}{u} = D_u \log \kappa(s).
\]
Hence, for $u, u' \in \mathfrak{a}$,
\[
	\sprod{u}{u'} 
	=D_u \big(D_{u'} \log \kappa(s) \big)
	=\sum_{j = 1}^d D_j D_{u'} \log \kappa(s) D_u s_j
	= B_s(D_u s, u'),
\]
i.e., $D_u s = B_s^{-1} u$. Therefore, we can calculate
\[ 
	\nabla \phi(\delta) 
	= s + \sum_{j = 1}^d \delta_j \nabla s_j - \sum_{j =1}^d D_j \log \kappa(s) \nabla s_j
	= s,
\]
thus, 
\[
	D^2_u \phi(\delta) = D_u \big( \sprod{u}{s} \big) = B_s^{-1}(u,u).
\]
In particular, $\phi$ is a convex function on $\calM$. Let $\delta_0 = \nabla \log \kappa(0)$. By Taylor's theorem,
we have
\begin{equation}
	\label{eq:29}
	\phi(\delta) = \frac{1}{2} B_0^{-1}(\delta - \delta_0, \delta-\delta_0) +
	\mathcal{O}(\norm{\delta-\delta_0}^3)
\end{equation}
as $\delta$ approaches $\delta_0$. We claim that
\footnote{$A \asymp B$ means that $c B \leq A \leq C B$, for some constants $c,C>0$.}
for all $\delta \in \calM$,
\begin{equation}
	\label{eq:33}
	\phi(\delta) \asymp B_0^{-1}(\delta - \delta_0, \delta - \delta_0).
\end{equation}
Since $\phi$ is convex and satisfies \eqref{eq:29}, it is enough to show that $\phi$ is bounded from above.
Given $\delta \in \calM$, let $v_0 \in \calV$ be any vector satisfying
\[
	\sprod{s}{v_0} = \max\big\{\sprod{s}{v} : v \in \calV\big\}.
\]
Because
$$
\sprod{s}{\delta} - \sprod{s}{v_0} =
\sum_{v \in \mathcal{V}}
\frac{c_v e^{\sprod{s}{v}} }{\kappa(s)}\sprod{s}{v-v_0} \leq 0,
$$
we get 
\[
	\phi(\delta) = \sprod{s}{\delta} - \log \kappa(s) 
	\leq \sprod{s}{\delta} - \log \big(c_{v_0} e^{\sprod{s}{v_0}}\big) 
	\leq -\log c_{v_0},
\]
proving \eqref{eq:33}.

In general, there is no explicit formula for the function $\phi$. By the implicit function theorem, the function
$s$ is real-analytic on $\calM$. In particular, $s$ is bounded on any compact subset of $\calM$. From the other
side, $\norm{s}$ approaches infinity when $\delta$ tends to $\partial \calM$.
To see this, let us denote by $\calF$ a facet of $\calM$ such that $\delta$ approaches $\partial \calM \cap \calF$. Let
$u$ be an outward unit normal vector to $\calM$ at $\calF$. Then for each $v' \in \calF \cap \calV$ and 
$v'' \in \calV \setminus \calF$ we have
\begin{align*}
	\sprod{v' - \delta}{u} 
	& = \sum_{v \in \calV} \frac{c_v e^{\sprod{s}{v}}}{\kappa(s)} \sprod{v' - v}{u} \\
	& = \sum_{v \in \calV \setminus \calF} \frac{c_v e^{\sprod{s}{v}}}{\kappa(s)} \sprod{v' - v}{u}
	\geq \frac{c_{v''} e^{\sprod{s}{v''}}}{\kappa(s)} \sprod{v' - v''}{u}.
\end{align*}
Therefore, for any $v \in \calV \setminus \calF$,
\begin{equation}
	\label{eq:11}
	\lim_{\delta \to \partial \calM \cap \calF} \frac{e^{\sprod{s}{v}}}{\kappa(s)} = 0.
\end{equation}
The next theorem provides a control over the speed of convergence in \eqref{eq:11}. 
\begin{theorem}
	\label{thm:2}
	There are constants $\eta \geq 1$ and $C > 0$ such that for all $\delta \in \mathcal{M}$, and $v \in \mathcal{V}$
	we have
	\[
		\frac{e^{\sprod{s}{v}}}{\kappa(s)}
		\geq C \dist(\delta, \partial\mathcal{M})^\eta
	\]
	where $s=s(\delta)$ satisfies $\delta = \nabla \log \kappa(s)$. 
\end{theorem}
\begin{proof}
	We consider any enumeration of elements of $\calV = \{v_1, \ldots, v_N\}$. Define
	$$
	\Omega = \big\{\omega \in S^{r-1}:
		\sprod{\omega}{v_{i}} \geq \sprod{\omega}{v_{i+1}} \text{ for } i=1,\ldots,N-1 \big\}
	$$
	where $S^{r-1}$ is the unit sphere in $\mathfrak{a}$ centered at the origin. Since $\calV$ is finite, it is enough
	to prove that there are $C > 0$ and $\eta \geq 1$ such that for all $x \in \mathfrak{a}$, if
	$\frac{x}{\norm{x}} \in \Omega$ then for all $v \in \calV$,
	\[
		\frac{e^{\sprod{x}{v}}}{\kappa(x)}
		\geq
		C
		\dist(\delta, \partial \calM)^{\eta}
	\]
	where
	\[
		\delta = \sum_{v \in \calV} \frac{c_v e^{\sprod{x}{v}}}{\kappa(x)} \cdot v.
	\]
	Without loss of generality, we may assume that $\Omega \neq \emptyset$. Let $k$ be the smallest index such that
	points $\{v_1, \ldots, v_k\}$ do not lay on the same facet of $\calM$. Let us recall that a set $\calF$ is a facet of
	$\calM$ if there are $\lambda \in S^{r-1}$ and $c \in \RR$ such that for all $v \in \calV$,
	$\sprod{\lambda}{v} \leq c$, and
	\[
		\calF = \conv\{v \in \calV : \sprod{\lambda}{v} = c\}.
	\]
	Since $\{v_1, \ldots, v_k\}$ do not lay on the same facet of $\calM$ and $\Omega$ is a compact set, there is 
	$\epsilon > 0$ such that for all $\omega \in \Omega$ we have
	\begin{equation}
		\label{eq:8}
		\sprod{\omega}{v_1} \geq \sprod{\omega}{v_k} + \epsilon.
	\end{equation}
	Indeed, otherwise, there are $\omega_n \in \Omega$ such that
	\[
		\sprod{\omega_n}{v_k} \leq \sprod{\omega_n}{v_1} \leq \sprod{\omega_n}{v_k} + \frac{1}{n}.
	\]
	Since $\Omega$ is compact, there is $\omega_0 \in \Omega$ such that
	\[
		\sprod{\omega_0}{v_1} = \sprod{\omega_0}{v_k},
	\]
	and for each $i \in \{2, \ldots, N\}$,
	\[
		\sprod{\omega_0}{v_1} \geq \sprod{\omega_0}{v_i}.
	\]
	This contradicts that $\{v_1, \ldots, v_k\}$ do not lay on the same facet of $\calM$.

	Let $\calF$ be a facet containing $\{v_1,\ldots,v_{k-1}\}$ determined by $\lambda \in S^{r-1}$ and $c \in \RR$.
	Let us consider $x \in \RR^d$ such that $\frac{x}{\norm{x}} \in \Omega$ and
	$$
	\delta = \sum_{v\in\mathcal{V}} \frac{c_v e^{\sprod{x}{v}}}{\kappa(x)} \cdot v.
	$$
	The distance of $\delta$ to a plane containing the facet $\calF$ is not bigger than $c - \sprod{\lambda}{\delta}$, thus
	\begin{align*}
		\dist(\delta, \partial \mathcal{M}) 
		\leq 
		c - \sprod{\lambda}{\delta}
		& = 
		\sum_{v\in\mathcal{V}\setminus\mathcal{F}} 
		\frac{c_v e^{\sprod{x}{v}}}{\kappa(x)}
		\sprod{\lambda}{v_1-v} \\
		& \leq 
		2 \max\{\norm{v} : v \in \calV\} \cdot \frac{e^{\sprod{x}{v_k}}}{\kappa(x)}.
	\end{align*}
	Since
	\[
		c_{v_1} e^{\sprod{x}{v_1}} \leq \kappa(x) \leq e^{\sprod{x}{v_1}},
	\]
	we obtain 
	\[
		e^{\sprod{x}{v_k - v_1}} \geq 
		C \dist(\delta, \partial \calM).
	\]
	In particular, for $1 \leq j \leq k$, we have
	\[
		\frac{e^{\sprod{x}{v_j}}}{\kappa(x)} \geq 
		C \dist(\delta, \partial \calM).
	\]
	If $j > k$, we can estimate
	\[
		\frac{e^{\sprod{x}{v_j}}}{\kappa(x)}
		\geq
		e^{\sprod{x}{v_j - v_1}} 
		 = 
		\Big(e^{\sprod{x}{v_k - v_1}}\Big)^{\sprod{x}{v_1 - v_j}/\sprod{x}{v_1 - v_k}} 
		\geq C \dist(\delta, \partial\mathcal{M})^{\sprod{x}{v_1-v_j}/\sprod{x}{v_1 - v_k}}
	\]
	which finishes the proof since, by \eqref{eq:8},
	\[
		1 \leq \frac{\sprod{x}{v_1 - v_j}}{\sprod{x}{v_1 - v_k}} \leq \epsilon^{-1} \norm{v_1 - v_j},
	\]
	thus it is enough to take
	\[
		\eta = \epsilon^{-1} \cdot \max\{\norm{v_1 - v} : v \in \calV\}.
		\qedhere
	\]	
\end{proof}

\subsection{Analytic lemmas about multiple derivation}
\label{subsec:2.2}
For a multi-index $\sigma \in \mathbb{N}^r$ we denote by $X_\sigma$ a multi-set containing
$\sigma(i)$ copies of $i$. Let $\Pi_\sigma$ be a set of all partitions of $X_\sigma$ and 
let $\{u_1,\ldots,u_r\}$ be a basis of $\mathfrak{a}$.  For the convenience of the reader
we recall

\begin{lemma}[Fa\`a di Bruno's formula]
	\label{lem:1}
	There are positive constants $c_\pi$, $\pi \in \Pi_\sigma$, such that for sufficiently smooth
	functions $f: S \rightarrow T$, $F: T \rightarrow \mathbb{R}$, $T \subset \mathbb{R}$,
	$S \subset \mathbb{R}^r$, we have
	$$
	\der{\sigma} F(f(s)) = \sum_{\pi \in \Pi_\sigma} c_\pi \left.
	\frac{{\rm d}^m }{{\rm d} t^m} \right|_{t = f(s)} F(t)
	\prod_{j=1}^{m} \der{B_j} f(s)
	$$
	where $\pi = \{B_1, \ldots, B_m\}$.
\end{lemma}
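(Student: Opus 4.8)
The statement is the classical multivariate Faà di Bruno formula, and the plan is to prove it by induction on $\abs{\sigma} = \sum_i \sigma(i)$, differentiating one more time and tracking how the set of partitions $\Pi_\sigma$ grows. The base case $\abs{\sigma} = 1$ is just the ordinary chain rule $\der{\{i\}} F(f(s)) = F'(f(s)) \der{\{i\}} f(s)$, corresponding to the unique partition of the singleton multi-set $X_{\{i\}}$ into one block, with $c_\pi = 1$. For the inductive step, I would fix a multi-index $\sigma$ and an index $i$, assume the formula holds for $\sigma$, and apply $\der{\{i\}}$ to both sides of
$$
\der{\sigma} F(f(s)) = \sum_{\pi \in \Pi_\sigma} c_\pi\, \Big(\tfrac{{\rm d}^m}{{\rm d} t^m} F\Big)(f(s)) \prod_{j=1}^{m} \der{B_j} f(s),
$$
where $\pi = \{B_1, \ldots, B_m\}$. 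By the product rule and the one-variable chain rule, each term splits into $m+1$ pieces: one where $\der{\{i\}}$ hits the factor $(\tfrac{{\rm d}^m}{{\rm d} t^m} F)(f(s))$, producing $(\tfrac{{\rm d}^{m+1}}{{\rm d} t^{m+1}} F)(f(s)) \cdot \der{\{i\}} f(s)$ and thereby the partition of $X_{\sigma + e_i}$ obtained from $\pi$ by adjoining the new singleton block $\{i\}$; and, for each $j$, one where $\der{\{i\}}$ hits the factor $\der{B_j} f(s)$, turning it into $\der{B_j \cup \{i\}} f(s)$ and thereby the partition of $X_{\sigma + e_i}$ obtained from $\pi$ by inserting the extra element $i$ into block $B_j$.

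The combinatorial heart of the argument is the observation that every partition $\pi' \in \Pi_{\sigma + e_i}$ arises this way from a unique predecessor $\pi \in \Pi_\sigma$ (namely, delete the element $i$ from whichever block of $\pi'$ contains it, discarding that block if it becomes empty), and that the resulting recursion on the coefficients — $c_{\pi'}$ equals a sum of the $c_\pi$ over the finitely many ways $\pi'$ can be produced — manifestly keeps all $c_{\pi'}$ positive once the $c_\pi$ are. This is all that the lemma asserts: the precise values of $c_\pi$ are irrelevant for the applications, only positivity and the structural form matter. I would remark that one can, if desired, solve the recursion explicitly to get $c_\pi = \big(\prod_i \sigma(i)!\big) \big/ \big(\prod_{B} (\text{multiplicities within } B)!\big) \cdot (\text{something})$, but since the paper only needs the stated shape I would not dwell on it.

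The only genuine obstacle is bookkeeping: one must be careful that $X_\sigma$ is a \emph{multi-set} (repeated indices $i$ are distinguishable copies for the purpose of counting partitions, which is what generates the binomial-type multiplicities), and that "inserting $i$ into $B_j$" versus "creating a new singleton" are the only two moves, so no partition is produced twice from the same $\pi$ by two different moves — this is clear since the two moves change the block containing $i$ in distinguishable ways. Since this is a well-known classical identity recalled purely for the reader's convenience, I would keep the write-up brief and refer to the standard literature for the closed form of $c_\pi$.
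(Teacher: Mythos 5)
The paper does not actually prove Lemma~\ref{lem:1}: it is explicitly introduced with the words ``for the convenience of the reader we recall'' and no proof environment follows, since it is a classical identity. So there is no proof in the paper to compare yours against. Your inductive argument is the standard way to establish Fa\`a di Bruno's formula, and the overall structure (base case from the chain rule, inductive step via the product rule splitting into ``create a new singleton block'' versus ``enlarge an existing block'', positivity of $c_\pi$ preserved by the resulting recursion) is correct and exactly what one would write.

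One small caveat worth tightening if you were to write this up in full: your parenthetical claim that every $\pi' \in \Pi_{\sigma + e_i}$ arises from a \emph{unique} predecessor $\pi \in \Pi_\sigma$, and your later remark that no $\pi'$ is produced twice from the same $\pi$, are both slightly off because $X_\sigma$ is a multi-set. If $\pi'$ has several blocks containing copies of $i$, deleting a copy from different blocks can yield different predecessors (so the predecessor is not unique); and conversely, if $\pi$ has two identical blocks, inserting $i$ into either one produces the same $\pi'$ (so the same $\pi$ can feed $\pi'$ via two moves). Neither of these affects the conclusion you actually need --- every contribution to $c_{\pi'}$ in the recursion is a positive multiple of some $c_\pi > 0$, so positivity is inherited --- but the ``unique predecessor'' phrasing should be replaced by a statement that $c_{\pi'}$ is a finite positive-integer combination of the $c_\pi$, which is all the lemma requires.
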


Let us observe that for
\[
	F(t) = \frac{1}{2-t}, \qquad\text{and}\qquad f(s) = \prod_{j=1}^r \frac{1}{1 - s_j},
\]
the function $F(f(s))$ is real-analytic in some neighborhood of $s = 0$, thus, there is $C > 0$
such that for every $\sigma \in \mathbb{N}^r$,
\begin{equation}
	\label{eq:22}
	\sum_{\pi \in \Pi_\sigma} c_\pi m! \prod_{j=1}^m B_j! = \der{\sigma} F(f(0)) \leq C^{\abs{\sigma}+1} \sigma!
\end{equation}
where for a multi-set $B$ containing $\mu(i)$ copies of $i$ we have set 
\[
	B! = \prod_{i = 1}^r \mu(i)!
\]
Using Lemma \ref{lem:1} we can show
\begin{lemma}
	\label{lem:2}
	Let $\calV \subset \RR^d$ be a set of finite cardinality. Assume that for each $v \in \calV$, 
	we are given $a_v \in \mathbb{C}$, and $b_v > 0$. Then for $z = x+i\theta \in \CC^d$ such that
	\[
		\norm{\theta} \leq (2 \cdot \max\{\norm{v}: v \in \mathcal{V}\})^{-1},
	\]
	we have
	\begin{equation}
		\label{eq:3}
		\Big| \sum_{v \in \mathcal{V}} b_v e^{\sprod{z}{v}} \Big|
		\geq \frac{1}{\sqrt{2}} \sum_{v \in \mathcal{V}} b_v e^{\sprod{x}{v}}.
	\end{equation}
	Moreover, there is $C > 0$ such that for all $\sigma \in \mathbb{N}^d$,
	\begin{equation}
		\label{eq:48}
		\bigg| \der{\sigma} \bigg\{ \frac{ \sum_{v \in \mathcal{V}} a_v e^{\sprod{z}{v}} }
		{ \sum_{v \in \mathcal{V}} b_v e^{\sprod{z}{v}} } \bigg\} \bigg| 
		\leq C^{\abs{\sigma}} \sigma! \frac{ \sum_{v \in \mathcal{V}} \abs{a_v} e^{\sprod{x}{v}} }
		{ \sum_{v \in \mathcal{V}} b_v e^{\sprod{x}{v}} }.
	\end{equation}
\end{lemma}
\begin{proof}
	We start by proving \eqref{eq:3}. We have
	\begin{align*}
		\Big| \sum_{v \in \calV} b_v e^{\sprod{z}{v}} \Big|^2
		& =
		\sum_{v, v' \in \calV} b_v b_{v'} e^{\sprod{x}{v + v'}} \cos\sprod{\theta}{v-v'} \\
		& \geq
		\sum_{v, v' \in \calV} b_v b_{v'} e^{\sprod{x}{v + v'}} \left(1 - \frac{\sprod{\theta}{v-v'}^2}{2}\right) \\
		& \geq
		\frac{1}{2}
		\Big( \sum_{v \in \calV} b_v e^{\sprod{x}{v}} \Big)^2 
	\end{align*}
	because $\abs{\sprod{\theta}{v-v'}} \leq 1$.

	For the proof of \eqref{eq:48}, it is enough to show
	\begin{equation}
		\label{eq:50}
		\bigg|\partial^{\sigma} \bigg\{ \frac{1}{\sum_{v \in \calV} b_v e^{\sprod{z}{v}}}\bigg\} \bigg|
		\leq
		C^{\abs{\sigma} + 1} \sigma! \frac{1}{\sum_{v \in \calV} b_v e^{\sprod{x}{v}}}.
	\end{equation}
	Indeed, since
	\begin{equation}
		\label{eq:51}
		\Big|
		\partial^\alpha \Big\{\sum_{v \in \calV} a_v e^{\sprod{z}{v}} \Big\}
		\Big|
		\leq
		\sum_{v \in \calV} 
		\abs{a_v} \cdot \abs{v^\alpha} e^{\sprod{x}{v}} 
		\leq
		C^{\abs{\alpha}} 
		\sum_{v \in \calV} \abs{a_v} e^{\sprod{x}{v}},
	\end{equation}
	by \eqref{eq:50} and the Leibniz's rule we obtain \eqref{eq:48}.
	To show \eqref{eq:50}, we use Fa\`a di Bruno's formula with $F(t) = 1/t$. By Lemma \ref{lem:1} together with
	estimates \eqref{eq:3} and \eqref{eq:51}, we get
	\begin{align*}
		\bigg|
		\partial^{\sigma} 
		\bigg\{ \frac{1}{\sum_{v \in \calV} b_v e^{\sprod{z}{v}}} \bigg\}
		\bigg|
		&\leq
		\sum_{\pi \in \Pi_\sigma} c_\pi m! \Big(\sum_{v \in \calV} b_v e^{\sprod{x}{v}} \Big)^{-m-1}
		\prod_{j=1}^m \Big| \partial^{B_j}\Big\{\sum_{v \in \calV} b_v e^{\sprod{z}{v}} \Big\}\Big| \\
		&\leq
		C^{\abs{\sigma}}
		\frac{1}{\sum_{v \in \calV} b_v e^{\sprod{x}{v}}}
		\sum_{\pi \in \Pi_\sigma} c_\pi m! \prod_{j = 1}^m B_j!\\
		&\leq
		C^{\abs{\sigma} + 1}
		\frac{1}{\sum_{v \in \calV} b_v e^{\sprod{x}{v}}}
	\end{align*}
	where in the last inequality we have used \eqref{eq:22}.
\end{proof}

\subsection{Variation on the marriage lemma}
\label{subsec:1}
The following combinatorial lemma may be known but we include its proof for completeness and 
lack of reference. Let $(C_1, C_2, \ldots, C_r)$ be a fixed sequence of subsets of a finite set 
$X$. A multi-index $\gamma \in \mathbb{N}^r$ is called admissible if there is $(X_j : 1 \leq j \leq r)$
a partial partition of $X$ such that $X_j \subseteq C_j$ and $\abs{X_j} = \gamma(j)$. We set
$e_j$ to be a multi-index with $1$ on the $j$th position and $0$ elsewhere.
\begin{lemma}
	\label{lem:5}
	If $\gamma$ is admissible then for any partial partition $(X_j : 1 \leq j \leq r)$ corresponding
	to $\gamma$ we have
	\[
		\bigcup_{j \in J_\gamma} X_j = \bigcup_{j \in J_\gamma} C_j
	\]
	where $J_\gamma = \{j: \gamma+e_j \text{ is not admissible}\}$.
\end{lemma}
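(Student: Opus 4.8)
The plan is to argue by contradiction. Suppose $\gamma$ is admissible and fix a partial partition $\{X_j\}_{j=1}^r$ realizing it, so $X_j \subseteq C_j$ and $\abs{X_j} = \gamma(j)$. The inclusion $\bigcup_{j \in J_\gamma} X_j \subseteq \bigcup_{j \in J_\gamma} C_j$ is immediate, so the content is the reverse inclusion. Assume it fails: there exist $k \in J_\gamma$ and an element $x \in C_k$ with $x \notin \bigcup_{j \in J_\gamma} X_j$. Since $k \in J_\gamma$, the multi-index $\gamma + e_k$ is \emph{not} admissible. The strategy is to use $x$ to manufacture a partial partition witnessing the admissibility of $\gamma + e_k$, contradicting $k \in J_\gamma$.

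The obstruction to simply adjoining $x$ to $X_k$ is that $x$ may already belong to some $X_\ell$ with $\ell \notin J_\gamma$. In that case $\gamma + e_\ell$ \emph{is} admissible (by definition of $J_\gamma$), so let $\{Y_j\}_{j=1}^r$ be a partial partition realizing $\gamma + e_\ell$: thus $Y_j \subseteq C_j$, $\abs{Y_\ell} = \gamma(\ell)+1$, and $\abs{Y_j} = \gamma(j)$ for $j \neq \ell$. Now I would like to "rotate" the surplus from coordinate $\ell$ into coordinate $k$. Consider the symmetric difference argument: form the partial partition $\{Y_j\}$ and compare it with $\{X_j\}$ along the indices $\ell$ and $k$. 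The element $x$ lies in $X_\ell$ but, being absent from every $X_j$ with $j \in J_\gamma$ (in particular $x \notin X_k$ even though $x \in C_k$), it is a genuine candidate to be moved. Since $\abs{Y_\ell} > \abs{X_\ell}$ while the total sizes over all other coordinates match up to the single extra element, an alternating-path / augmenting-path argument in the bipartite incidence structure between elements of $X$ and the index set $\{1,\dots,r\}$ (with $j$ joined to the elements of $C_j$) produces a reassignment: we can redistribute so that coordinate $k$ gains one element while coordinate $\ell$ returns to size $\gamma(\ell)$ and all other coordinates keep their sizes. The key point making the augmenting path terminate correctly is precisely that $x \in C_k$ and $x$ is free for coordinate $k$ in the configuration $\{X_j\}$; following how the chain of swaps propagates through coordinates, the fact that any coordinate we are forced to \emph{decrease} lies in $J_\gamma$ (otherwise we would already have a witness for $\gamma$ plus that coordinate, handled by induction on the number of swaps) keeps us inside the set where decreases are permitted. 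The resulting partial partition has sizes $\gamma + e_k$, contradicting the non-admissibility of $\gamma + e_k$.

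I expect the main obstacle to be making the "rotation" / augmenting-path step precise without circularity: one must be careful that the chain of element-swaps used to move the surplus from $\ell$ to $k$ only ever forces a size decrease at coordinates that are themselves in $J_\gamma$, so that the intermediate configurations remain legitimate partial partitions of the correct total type. A clean way to organize this is to choose, among all partial partitions realizing $\gamma$, one that minimizes $\bigl|\bigcup_{j \in J_\gamma} X_j \mathbin{\triangle} \bigcup_{j \in J_\gamma} C_j\bigr|$ (equivalently, maximizes $\abs{\bigcup_{j \in J_\gamma} X_j}$), and then show that if this quantity is not zero, a single swap strictly improves it while preserving realization of $\gamma$ — unless the swap already exhibits admissibility of some $\gamma + e_j$ with $j \in J_\gamma$, which is the desired contradiction. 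This extremal formulation replaces the global augmenting path by a one-step local move and sidesteps the bookkeeping.
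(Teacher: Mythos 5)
Your general idea of rerouting elements via an augmenting path is the same engine the paper uses, but two of your steps do not hold up. The extremal reformulation at the end is vacuous: because the $X_j$ are pairwise disjoint with $\abs{X_j} = \gamma(j)$ and $X_j \subseteq C_j$, the quantity $\abs{\bigcup_{j \in J_\gamma} X_j} = \sum_{j \in J_\gamma} \gamma(j)$ is the \emph{same} for every partial partition realizing $\gamma$, so there is nothing to minimize and no single swap can strictly shrink the symmetric difference. (This constancy is genuinely useful --- it upgrades an equality of cardinalities to an equality of sets --- but not in the way you invoke it.) The invariant you rely on, that ``any coordinate we are forced to decrease lies in $J_\gamma$,'' fails already at the first step: you begin by taking the surplus away from coordinate $\ell$, and $\ell \notin J_\gamma$ by assumption; nor is there any mechanism along the alternating path that keeps later decreased coordinates inside $J_\gamma$. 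The ``induction on the number of swaps'' is not actually set up to deliver this.

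The paper sidesteps both issues by running the construction in the opposite direction. Fixing $m \in J_\gamma$, it builds the reachability closure $I \supseteq \{m\}$ under the rule ``add $j$ whenever $X_j$ meets $C_k$ for some $k$ already in $I$,'' and first proves $\bigcup_{j \in I} X_j = \bigcup_{j \in I} C_j$: if some $y \in C_{j_n}$ with $j_n \in I$ were missing, an explicit rotation $Y_{j_i} = (X_{j_i} \cup \{x_i\}) \setminus \{x_{i-1}\}$ along a chain from $m$ to $j_n$ would produce a type-$\gamma$ partial partition leaving an element of $C_m$ uncovered, contradicting $m \in J_\gamma$. Only \emph{afterwards} does it establish $I \subseteq J_\gamma$, by a pure counting argument: if $k \in I$ had $\gamma + e_k$ admissible, one could produce a type-$\gamma$ partial partition missing an element of $C_k$, yet $\sum_{j \in I} \gamma(j) = \abs{\bigcup_{j \in I} C_j}$ forces every type-$\gamma$ partial partition to cover $\bigcup_{j \in I} C_j$ exactly. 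So $I \subseteq J_\gamma$ is a \emph{conclusion} reached after the rotation, not an invariant maintained during it --- and that is precisely the step your sketch cannot supply.
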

\begin{proof}
	Given $m \in J_\gamma$ we construct a sequence $(I_j : 0 \leq j)$ as follows:
	$I_0=\{m\}$ and for $i \geq 0$,
	$$
	I_{i+1} = \big\{j : X_j \cap C_k \neq \emptyset \text{ for some } k \in I_i \big\}.
	$$
	We notice that $I_i \subseteq I_{i+1}$. Let $I = \limsup_{i \geq 0} I_i$ and 
	$
	V = \bigcup_{j \in I} X_j.
	$
	We claim that
	\begin{equation}
		\label{eq:10}
		V = \bigcup_{j \in I} C_j.
	\end{equation}
	Suppose that, contrary to the claim, there is
	\[
		y \in \bigcup_{j \in I} C_j \cap V^c.
	\]
	We first observe that $y \not\in \bigcup_{j=1}^r X_j$. Indeed, $y \in C_j \cap X_{j'}$ for some $j \in I$
	implies that $j' \in I$. Also there are sequences
	$(j_i : 1 \leq i \leq n)$ and $(x_i : 0 \leq i \leq n)$ of distinct elements such that
	$j_1 = m$, $y \in C_{j_n}$, $x_0 \in X_{j_1}$, $x_n = y$ and $x_i \in C_{j_i} \cap X_{j_{i+1}}$
	for $i \in \{1, \ldots, n-1\}$. By setting
	$$
	Y_j = 
	\begin{cases}
		\big(X_{j_i} \cup \{x_i\} \big)\setminus \{x_{i-1}\} & \text{ if } j = j_i \text{ for }
		\in i \in \{1, \ldots, n\}, \\
		X_j & \text{ otherwise,}
	\end{cases}
	$$
	we obtain a partial partition of $X$ corresponding to $\gamma$ such that
	$$
	x_0 \in C_m \cap \Big(\bigcup_{j=1}^r Y_j \Big)^c
	$$
	which is not possible since $m \in J_\gamma$, proving the claim.

	As a consequence of the claim, we have
	\[
		\abs{V} = \sum_{j \in I} \gamma(j).
	\]
	We next show that $I = J_\gamma$. Suppose that, on the contrary, there is $k \in I \cap J_\gamma^c$. Then there
	exists $(Y_j : 0 \leq j \leq r)$ a partial partition corresponding to $\gamma$ such that
	\begin{equation}
		\label{eq:31}
		C_k \cap \Big(\bigcup_{j=1}^r Y_j \Big)^c \neq \emptyset.
	\end{equation}
	Since $Y_j \subseteq C_j$ and
	$$
	\sum_{j \in I} \abs{Y_j} = \sum_{j \in I} \gamma(j),
	$$
	we must have
	$$
	\bigcup_{j \in I} Y_j = \bigcup_{j \in I} C_j
	$$
	which contradicts \eqref{eq:31}. Therefore, $I \subseteq J_\gamma$ and the lemma follows.
\end{proof}

\section{Affine buildings}
\label{sec:4}
This section presents the singular, usually higher-dimensional, spaces in which we wish to study the behavior of
the heat kernel. These spaces are called affine buildings and are discrete analogues of Riemannian symmetric spaces.
They are union of Euclidean tilings in tight connection with the theory of root systems. They have strong symmetry
properties, so that they often have a very transitive automorphism group. Still, we prefer to use them in a purely
geometric way. The last subsection illustrates this choice by presenting the spherical harmonic analysis we need.
Indeed, harmonic analysis on buildings started in a group-theoretic context by exhibiting Gelfand pairs 
(see \cite{Satake1963}) and then by computing explicitly the corresponding spherical functions 
(see \cite{macdo0,matsumoto69}), but these fundamental works have now geometric generalizations avoiding group actions,
thus allowing to consider a few more cases in dimension $2$. 

\subsection{Root systems, weights and coweigths}
We start by recalling basic facts about root systems and Coxeter groups. A general reference
is \cite{Bourbaki2002}.

Let $\Phi$ be an irreducible but not necessarily reduced finite root system in $\mathfrak{a}$. 
Let $\{\alpha_i: i \in I_0\}$ where $I_0=\{1, \ldots, r\}$ be a fixed base of $\Phi$, and $\Phi^+$ the corresponding set
of all positive roots. Let $\mathfrak{a}_+$ be the positive Weyl chamber, i.e.
$$
\mathfrak{a}_+=\big\{x \in \mathfrak{a}: \sprod{\alpha}{x} > 0 \text{ for all } 
\alpha \in \Phi^+\big\}.
$$
By $\alpha_0$, we denote the highest root of $\Phi$, that is a root
$$
\alpha_0 = \sum_{i \in I_0} m_i \alpha_i,
$$
such that for any $\alpha \in \Phi$, $\alpha = \sum_{i \in I_0} n_i \alpha_i$ we have $n_i \leq m_i$.
We set $m_0 = 1$ and $I = I_0 \cup \{0\}$. Let 
$$
I_P=\{i \in I: m_i = 1\}.
$$
The dual basis to $\{\alpha_i: i \in I_0\}$ is denoted by $\{\lambda_i: i \in I_0\}$. The co-weight
lattice $P$ is the $\zspan$ of fundamental co-weights $\{\lambda_i: i \in I_0\}$. A co-weight
$\lambda \in P$ is called dominant if $\lambda = \sum_{i \in I_0} x_i \lambda_i$, where
$x_i \geq 0$ for all $i \in I_0$. Finally, the cone of all dominant co-weights is denoted by $P^+$.

Let $H_i=\{x \in \mathfrak{a}: \sprod{\alpha_i}{x}=0\}$ for each $i \in I_0$. We denote by $r_i$
the orthogonal reflection in $H_i$, i.e. $r_i(x) = x - \sprod{\alpha_i}{x} \alpha_i\spcheck$ for
$x \in \mathfrak{a}$ where for $\alpha \in \Phi$ we put
$$
\alpha\spcheck = \frac{2\alpha}{\sprod{\alpha}{\alpha}}.
$$
By $Q$ we denote the co-root lattice, that is $\zspan$ of the co-roots $\{\alpha\spcheck : \alpha \in \Phi\}$.
The subgroup $W_0$ of $\GL(\mathfrak{a})$ generated by $\{r_i: i \in I_0\}$ is the Weyl group of
$\Phi$. Let $r_0$ be the orthogonal reflection in the affine hyperplane $H_0=\{x \in \mathfrak{a}:
\sprod{\alpha_0}{x} = 1\}$. Then the affine Weyl group $W$ of $\Phi$ is the subgroup
of $\Aff(\mathfrak{a})$ generated by $\{r_i: i \in I\}$. Finally, the extended affine Weyl group
of $\Phi$ is $\widetilde{W}=W_0 \ltimes P$. We set
\[
	\rho = \sum_{j = 1}^r \lambda_j = \frac{1}{2} \sum_{\alpha \in \Phi^+} \alpha\spcheck
\]
Let $M=(m_{ij})_{i,j \in I}$ be a symmetric matrix with entries in $\mathbb{Z} \cup \{\infty\}$
such that for all $i,j \in I$,
$$
m_{ij} = \left\{
            \begin{array}{ll}
			 \geq 2 & \text{ if } i \neq j,\\
			 1      & \text{ if } i = j.
			\end{array}
         \right.
$$
The Coxeter group of type $M$ is the group $W$ given by the presentation
$$
\big\langle r_i: (r_i r_j)^{m_{ij}}=1 \text{ for all } i,j \in I \big\rangle.
$$
For a word $f=i_1 \cdots i_k$ in the free monoid $I$ we denote by $r_f$ an element of $W$ of the
form $r_f= r_{i_1} \cdots r_{i_k}$. The length of $w \in W$, denoted $\ell(w)$, is the smallest
integer $k$ such that there is a word $f=i_1 \cdots i_k$ and $w=r_f$.  We say $f$ is reduced if
$\ell(r_f) = k$.

\subsection{Building, thicknesses and (co)type}
\label{sec:4.2}
For the theory of affine buildings we refer the reader to \cite{ron}.

A set $\mathscr{X}$ equipped with a family of equivalence relations $\{\sim_i:i \in I\}$ is a
chamber system and the elements of $\mathscr{X}$ are called chambers. A gallery of type
$f = i_1 \cdots i_k$ in $\mathscr{X}$ is a sequence of chambers $(c_0, \ldots, c_k)$ such that for
all $1 \leq j \leq k$, $c_{j-1} \sim_{i_j} c_j$ and $c_{j-1} \neq c_j$. If $J \subseteq I$,
$J$-residue is a subset of $\mathscr{X}$ such that any two chambers can be joined by a gallery of
type $f = i_1\cdots i_k$ with $i_1, \ldots, i_k \in J$.

Let $W$ be a Coxeter group of type $M$. For each $i \in I$, we define an equivalence relation on
$W$ by declaring that $w \sim_i w'$ if and only if $w = w'$ or $w = w'r_i$. Then $W$ equipped with
$ \{\sim_i : i \in I\}$ is a chamber system called Coxeter complex of $W$.
\begin{definition}
	Let $W$ be a Coxeter group. A chamber system $\mathscr{X}$ is a building of type $W$ if
	\begin{enumerate}
			 \item for all $x \in \mathscr{X}$ and $i \in I$, 
				 $\abs{\{y \in \mathscr{X}: y \sim_i x\}} \geq 2$,
			 \item there is $W$-distance function
				 $\delta: \mathscr{X} \times \mathscr{X} \rightarrow W$ such that if $f$ is a
				 reduced word, then $\delta(x, y) = r_f$ if and only if $x$ and $y$ can be joined
				 by a gallery of type $f$.
	\end{enumerate}
	If $W$ is an affine Weyl group, the building $\mathscr{X}$ is called affine.
\end{definition}
Notice that if we define $\delta_W: W \times W \rightarrow W$ by $\delta_{W}(w, w')=w^{-1}w'$
then $\delta_W$ is $W$-distance function. Thus a Coxeter complex of $W$ is a building of type $W$.

A subset $\mathcal{A} \subset \mathscr{X}$ is called an apartment if there is a mapping
$\psi: W \rightarrow \mathscr{X}$ such that $\mathcal{A} = \psi(W)$ and for all
$w, w' \in W$, $\delta(\psi(w), \psi(w')) = \delta_W(w, w')$.

A building $\mathscr{X}$ has a geometric realization as a simplicial complex $\Sigma(\mathscr{X})$
where a residue of type $J$ corresponds to a simplex of dimension $\abs{I}-\abs{J}-1$. Let
$V(\mathscr{X})$ denote the set of vertices of $\Sigma(\mathscr{X})$. Define a mapping
$\tau: V(\mathscr{X}) \rightarrow I$ by declaring $\tau(x)=i$ if $x$ corresponds to a residue of
type $I \setminus \{i\}$. 

For $x \in \mathscr{X}$ and $i \in I$, let $q_i(x)$ be equal to
\[
	q_i(x) = \abs{\{y \in \mathscr{X}: y \sim_i x\}} - 1.
\]
We assume that the building is regular that is $q_i(x)$ is independent of $x$. Denote the common value by $q_i$, 
and assume local finiteness: $q_i < \infty$.  

To any irreducible locally finite affine building we associate an irreducible, but not
necessary reduced, finite root system $\Phi$ (see \cite{park2}) such that the affine Weyl group
corresponding to $\Phi$ is isomorphic to $W$, and $q_{\tau(v)} = q_{\tau(v+\lambda)}$ for all
$\lambda \in P$ and $v \in \Sigma(W)$. Then the set of \emph{good} vertices is defined by
$$
V_P = \{v \in V(\mathscr{X}): \tau(v) \in I_P\}.
$$

\subsection{Spherical harmonic analysis}
\label{subsec:4.3}
In this subsection we summarize spherical harmonic analysis on affine buildings
(see \cite{macdo0, park2}).

Let $\mathscr{X}$ be an irreducible locally finite regular affine building. Given
$x \in V_P$ and $\lambda \in P^+$, let $V_\lambda(x)$ denote the set of all $y \in V_P$ such that
there are: an apartment $\mathcal{A}$ containing $x$ and $y$, a type-preserving isomorphism 
$\psi: \mathcal{A} \rightarrow \Sigma(W)$ and $w \in \widetilde{W}$ such that $\psi(x) = 0$ and
$\psi(y) = w \lambda$. It may be shown that $\abs{V_\lambda(x)}$ is independent of $x$. Let $N_\lambda$ denote
its common value.

For each $\lambda \in P^+$, we define an operator $A_\lambda$ acting on $f \in \ell^2(V_P)$ by
$$
A_\lambda f(x) = \frac{1}{N_\lambda} \sum_{y \in V_\lambda(x)} f(y).
$$
Then $\mathscr{A}_0=\cspan\{A_\lambda : \lambda \in P^+\}$ is a commutative $\star$-subalgebra of
the algebra of bounded linear operators on $\ell^2(V_P)$, see \cite[Theorem 5.24]{park3}. The multiplicative functionals on 
$\mathscr{A}_0$ can be described in terms of Macdonald spherical functions $P_\lambda$, $\lambda \in P^+$, see
\cite[Section 6.3]{park3}. Namely, each multiplicative functional $h_z$, $z \in \mathfrak{a}_\mathbb{C}$, is a linear map on
$\mathscr{A}_0$ such that
\[
	h_z(A_\lambda) = P_\lambda(z)
\]
for all $\lambda \in P^+$. Before we recall the definition of Macdonald spherical functions, 
let us introduce some notation. Let $\Phi^{++}$ be the set of roots $\alpha \in \Phi^+$ so that 
$\frac{1}{2}\alpha \notin \Phi^+$. If $\alpha \in \Phi^{++}$ then $q_\alpha = q_i$ provided that $\alpha \in W_0\cdot\alpha_i$ 
for some $i \in I$. We define
\[
	\tau_\alpha = 
	\begin{cases}
		1 & \text{if } \alpha \notin \Phi, \\
		q_\alpha & \text{if } \alpha \in \Phi, \text{ but } \frac{1}{2}\alpha, 2 \alpha \notin \Phi, \\
		q_{\alpha_0} & \text{if } \alpha, \frac{1}{2} \alpha \in \Phi\\
		q_\alpha q_{\alpha_0}^{-1} &\text{if } \alpha, 2\alpha \in \Phi.
	\end{cases}
\]
Let $\chi_0$ denote the fundamental character that is a multiplicative function on $P$,
\[
	\chi_0(\lambda) = \prod_{\alpha \in \Phi^+} \tau_\alpha^{\sprod{\lambda}{\alpha}}.
\]
If $w \in W_0$ has a reduced expression $w = r_{i_1} r_{i_2} \cdots r_{i_k}$, then $q_w=q_{i_1} \cdots q_{i_k}$. 
If $\lambda \in P^+$, the Macdonald spherical function $P_\lambda$ is (see \cite{macdo0})
\[
	P_\lambda(z) = \frac{\chi_0(\lambda)^{-\frac{1}{2}} }{W_0(q^{-1})} 
	\sum_{w \in W_0} \bfc(w \cdot z) e^{\sprod{w \cdot z}{\lambda}}
\]
where
\begin{align*}
	\bfc(z) &= \prod_{\alpha \in \Phi^+} \frac{1 - \tau_\alpha^{-1} \tau_{\alpha/2}^{-1/2} e^{-\sprod{z}{\alpha\spcheck}}}
	{1-\tau_{\alpha/2}^{-1/2} e^{-\sprod{z}{\alpha\spcheck}}} \\
	&=
	\prod_{\alpha \in \Phi^{++}}
	\frac{\Big(1 - \tau_{2\alpha}^{-1} \tau_{\alpha}^{-\frac{1}{2}} e^{-\frac{1}{2}\sprod{z}{\alpha\spcheck}}\Big)
	\Big(1 + \tau_{\alpha}^{-\frac{1}{2}} e^{-\frac{1}{2} \sprod{z}{\alpha\spcheck}}\Big)}
	{1 - e^{-{\sprod{z}{\alpha\spcheck}}}},
\end{align*}
and 
$$
W_0(q^{-1})=\sum_{w \in W_0} q_w^{-1}.
$$
Values of $P_\lambda$ where the denominator of the $\bfc$-function equals zero is obtained by taking proper limits.

By $\mathscr{A}_2$ we denote the closure of $\mathscr{A}_0$ in the operator norm. Then $\mathscr{A}_2$ is
$C^\star$-algebra. To describe the Gelfand transform as well as the Planchrel's measure we need to distinguish two
cases:

\vspace*{1ex}
\noindent \emph{The standard case.} Assume that $\tau_\alpha \geq 1$ for all $\alpha \in \Phi$. Then for each
$\theta \in U_0$, where
\[
	U_0=\left\{\theta \in \mathfrak{a} : \sprod{\theta}{\alpha\spcheck} \leq \pi \text{ for all } \alpha \in \Phi\right\},
\]
the multiplicative functional $h_{i\theta}$ extends to $\mathscr{A}_2$ in a continuous way. Moreover, for each
$A \in \mathscr{A}_0$, $x \in V_P$, and $y \in V_\lambda(x)$, we have
\begin{equation}
	\label{eq:9}
	(A \delta_x)(y) = \bigg(\frac{1}{2\pi}\bigg)^r \frac{W_0(q^{-1})}{|W_0|}
	\int_{U_0} h_{i\theta}(A) \overline{P_\lambda(i\theta)} \frac{{\rm d} \theta}{|\bfc(i\theta)|^2}
\end{equation}
where $\delta_x(y)$ is Dirac's delta at $x$, see \cite[Theorem 5.2 \& Corollary 5.5]{park2}.

\vspace*{1ex}
\noindent \emph{The exceptional case.} Suppose that $\tau_\alpha < 1$ for some $\alpha \in \Phi$. It is only possible 
when $\Phi$ is $\text{BC}_r$ root system and $q_r < q_0$, namely
\[
	\Phi = \left\{ \pm e_i, \pm 2e_i, \pm e_j \pm e_k : 1 \leq i \leq r, 1 \leq j < k \leq r
	\right\}
\]
where $\{e_1, e_2, \ldots, e_r\}$ is the standard basis of $\mathfrak{a}$. We set $a = \sqrt{q_r q_0}$ and
$b = \sqrt{q_r/q_0}$. Then
\[
	\bfc(z) = \bigg( \prod_{j = 1}^r \frac{(1-a^{-1} e^{-z_j}) (1 + b^{-1} e^{-z_j})}{1-e^{-2z_j}} \bigg)
	\bigg(
	\prod_{1 \leq j < k \leq r} \frac{(1-q_1^{-1} e^{-z_j - z_k})(1 - q_1^{-1}e^{-z_j+z_k})}
	{(1 - e^{-z_j-z_k})(1-e^{-z_j+z_k})}\bigg).
\]
Let $v = \log b - i \pi$. For $j =1, \ldots, r$, we set
\[
	U_j = \left\{
	\theta \in \left[-\tfrac{1}{2} \pi, \tfrac{3}{2} \pi \right]^r  : \theta_j = -v \right\}.
\]
and $U_0 = [-\pi/2, 3 \pi/2]^r$. For $\theta \in U_1$, we define
\[
	\phi_1(i\theta) = \lim_{t \to 0} \frac{|\bfc(i\theta + t e_j)|^2}
	{1-e^t}.
\]
Then for each $\theta \in U_0 \sqcup U_1$, the multiplicative functional $h_{i\theta}$ extends to $\mathscr{A}_2$
in a continuous way. Moreover, for $A \in \mathscr{A}_0$, $x \in V_P$ and $y \in V_\lambda(x)$, we have
\begin{equation}
	\label{eq:81}
	\begin{aligned}
	(A \delta_x)(y) 
	&= \bigg(\frac{1}{2\pi}\bigg)^r \frac{W_0(q^{-1})}{|W_0|}
	\int_{U_0} h_{i\theta}(A) \overline{P_\lambda(i\theta)} \frac{{\rm d} \theta}{|\bfc(i\theta)|^2} \\
	&\phantom{=}+
	\bigg(\frac{1}{2\pi} \bigg)^{r-1} \frac{W_0(q^{-1})}{|W_0'|}
	\int_{U_1} h_{i\theta}(A) \overline{P_\lambda(i\theta)} \frac{{\rm d} \theta}{\phi_1(i\theta)}
	\end{aligned}
\end{equation}
where $W_0'$ is the Coxeter group $C_{r-1}$ and the measure ${\rm d}\theta$ on $U_j$ equals
\[
	{\rm d}\theta = \prod_{\stackrel{k = 1}{k \neq j}}^r {\rm d}\theta_k
\]
for $j = 0, 1, \ldots, r$, see \cite[Theorem 5.7 \& Corollary 5.8]{park2}

\section{Asymptotics}
In this section, we prove the main result of the paper, on the asymptotic behavior of the heat kernel on affine buildings 
(see Theorem \ref{thm:4}). This requires to recall some facts on random walks in Section \ref{sec:4.1}, where we also
explain the relationship with Section \ref{sec:2.1} on convex combinations of exponentials. The longest Section
\ref{sec:4.4} deals with the proof of Theorem \ref{thm:4}: it is analytic in nature but requires some combinatorial
arguments with Lie-theoretic ingredients. It starts with an application of the spherical Fourier transform and the
contour deformation which results in an oscillatory integral studied by the steepest descent method. The rest of the
preliminary Section \ref{sec:3}, i.e. analytic lemmas on multiple derivations and a combinatorial one elaborating on the
marriage lemma, is used here. Considerations of root systems are used to determine the correct leading terms in the
desired asymptotics. At last, Section \ref{subsec:4.6} is dedicated to asymptotics for the Green functions.

\subsection{Random walks}
\label{sec:4.1}
In this paper we are interested in asymptotic behavior of \emph{isotropic} random walks on good vertices $V_P$, i.e.
random walks with the transition probabilities $p(x,y)$ constant on
\[
	\big\{(x,y) \in V_P \times V_P: y \in V_\lambda(x)\big\}
\]
for every $\lambda \in P^+$. Let $A$ denote the corresponding operator acting on $\ell^2(V_P)$, namely 
for $f \in \ell^2(V_P)$,
$$
A f (x) = \sum_{y \in V_P} p(x, y) f(y).
$$
Then $A$ belongs to the algebra $\mathscr{A}_2$ and may be expressed as
$$
A = \sum_{\mu\in P^+} a_\mu A_\mu
$$
where $a_\mu \geq 0$ and $\sum_{\mu \in P^+} a_\mu = 1$. We say that the random walk has a \emph{finite range,} if 
$a_\mu > 0$ for finitely many $\mu \in P^+$. We set $p(1; x, y) = p(x, y)$, and for $n \geq 2$,
\[
	p(n; x, y) = \sum_{z \in V_p} p(n-1; x, z) p(z, y).
\]
If $O$ is a fixed good vertex, we write $p(n; x) = p(n; O, x)$.

The random walk is \emph{irreducible,} if for any $x, y \in V_P$ there is $n \in \NN$ such that
$p(n; x, y) > 0$. Lastly, the walk is called \emph{aperiodic} if for every $x \in V_P$,
\[
	\gcd\big\{n \in \NN : p(n; x, x) > 0\big\} = 1.
\]
We shall be concern with irreducible and aperiodic random walks having a finite range. Then there are a finite set
$\mathcal{V} \subset P$, and positive real numbers $\{c_v:v \in \mathcal{V}\}$, such that
\[
	\kappa(z) = \sum_{v \in \mathcal{V}} c_v e^{\sprod{z}{v}}
\]
where we have set
\[
	\kappa(z) = \varrho^{-1} h_z(A)
\]
and $\varrho = h_0(A)$. We can use the results of Section \ref{sec:2.1}. Recall that $\calM \subset \mathfrak{a}$ is
the interior of the convex hull of $\calV$. The set $\calM$ is not empty as it contains the convex hull of
\[
	\bigg\{\frac{\lambda_1}{m}, -\frac{\lambda_1}{m}, \ldots, \frac{\lambda_r}{m}, 
	-\frac{\lambda_r}{m} \bigg\}
\]
where $m$ is such that $V_{\lambda_j}(O) \subseteq p(m; \cdot)$ for all $j \in I_0$. Because $\kappa$ is $W_0$-invariant, 
we have $\nabla \kappa(0) = 0$. If $\delta \in \mathcal{M}$ and $w \in W_0$, we can write
\[
	w \cdot \delta = w \cdot \nabla \log \kappa(s) = \nabla \log \kappa(w \cdot s)
\]
where $s = s(\delta)$. Hence, Theorem \ref{thm:1} implies that $w \cdot s(\delta) = s(w \cdot \delta)$. For
$\alpha \in \Phi$, we set
\[
	r_\alpha (x) = x - \sprod{\alpha\spcheck}{x} \alpha.
\]
Since
\[
	0 \leq \sprod{s}{\delta}- \log \kappa(s) - \sprod{r_\alpha s}{\delta} + \log \kappa(r_\alpha s) 
	=\sprod{s}{\alpha\spcheck} \sprod{\alpha}{\delta},
\]
by the implicit function theorem, the mapping $s: \mathcal{M} \rightarrow \mathfrak{a}$ is real-analytic and
$s(\mathcal{M} \cap \cl \mathfrak{a}_+) = \cl \mathfrak{a}_+$. In what follows, $\eta \geq 1$ is the number
determined in Theorem \ref{thm:2}.

\subsection{Heat kernels}
\label{sec:4.4}
Before stating the asymptotic formula for $p(n; v)$, we need to introduce some notation. Given
$\emptyset \neq J \subsetneq I_0$, by $\Psi$ denote the set consisting of $\alpha \in \Phi$ so that
$\sprod{\alpha}{\lambda_j} = 0$ for all $j \in I_0 \setminus J$. Then $\Psi$ is a root system in
$\mathfrak{a}_\Psi = \rspan \Psi$. By $T_\Psi : \mathfrak{a} \rightarrow \mathfrak{a}$ we denote the orthogonal projection
along $\mathfrak{a}_\Psi$. Let $\Psi^+ = \Psi \cap \Phi^+$. For $\omega \in P^+$ and $x \in \mathfrak{a}$, we set
\[
	\calP_\Psi(\omega) = 
	\frac{\chi_0(\omega)^{-\frac{1}{2}}}{\abs{\bfb_\Psi(0)}^2}
	\cdot
	\lim_{\theta \to 0}
	\frac{1}{\abs{W_0(\Psi)}}
	\sum_{w \in W_0(\Psi)} e^{-\sprod{w \cdot \theta}{\omega}}
    \bfc_\Psi(-w \cdot \theta), 
\]
and
\[
	\calQ_\Psi(x) =
	\bigg(\frac{1}{2\pi}\bigg)^r
	\int_{\mathfrak{a}}
	e^{-\frac{1}{2} B_x(u, u)} \abs{\bpi_\Psi(u)}^2 {\: \rm d}u
	\cdot
	\bigg(
	\prod_{\alpha \in \Phi^+ \setminus \Psi^+}
	\frac{1 - \tau_{\alpha/2}^{-1/2} e^{-\sprod{x}{\alpha\spcheck}}}
	{1 - \tau_\alpha^{-1} \tau_{\alpha/2}^{-1/2} e^{-\sprod{x}{\alpha\spcheck}}}
	\bigg)
\]
where
\begin{equation}
	\label{eq:21}
	\bpi_\Psi(x) = \prod_{\alpha \in \Psi^{++}} \sprod{x}{\alpha\spcheck},
\end{equation}
and
\begin{align*}
    \bfc_\Psi(x) &= \prod_{\alpha \in \Psi^+}
    \frac{1 - \tau_\alpha^{-1} \tau_{\alpha/2}^{-1/2} e^{-\sprod{x}{\alpha\spcheck}}}
	{1 - \tau_{\alpha/2}^{-1/2} e^{-\sprod{x}{\alpha\spcheck}}}, \\
    \bfb_\Psi(x) &= \prod_{\alpha \in \Psi^{++}} \Big(1 - \tau_{2\alpha}^{-1} \tau_\alpha^{-1/2}
	e^{-\frac{1}{2} \sprod{x}{\alpha\spcheck}}\Big)
	\Big(1 + \tau_\alpha^{-1/2} e^{-\frac{1}{2} \sprod{x}{\alpha\spcheck}} \Big). 
\end{align*}
If $J = \emptyset$, then $\Psi = \emptyset$, and
\[
	\calP_\Psi(\omega) = \chi_0(\omega)^{-\frac{1}{2}},
	\qquad\text{and}\qquad
	\calQ_\Psi(x) = 
	\bigg(\frac{1}{2\pi}\bigg)^r
	\int_{\mathfrak{a}} e^{-\frac{1}{2} B_x(u, u)} {\: \rm d}u \cdot \frac{1}{\bfc(x)}.
\]
\begin{theorem}
	\label{thm:4}
	Let $J \subsetneq I_0$. Suppose that $(\omega_n : n \in \NN)$ is a sequence of co-weights
	such that $V_{\omega_n}(O)$ is contained in the support of $p(n; \,\cdot\,)$. We assume that
	$\delta_n = n^{-1} \omega_n$ satisfies
	\begin{subequations}
		\begin{equation}
		\label{eq:70}
		\lim_{n \to \infty} n^{-1} \dist(\delta_n, \partial \calM)^{-2\eta} =0,\\
		\end{equation}
		\begin{equation}
		\label{eq:70a}
		\lim_{n \to \infty} \sprod{\delta_n}{\alpha} \dist(\delta_n, \partial \calM)^{-2\eta} =0,
		\quad\text{for all}\quad \alpha \in \Psi^+,
		\end{equation}
		\begin{equation}
		\label{eq:70b}
		\sprod{\delta_n}{\alpha} \geq \xi, \quad\text{for all}\quad \alpha \in \Phi^+\setminus\Psi^+,
		\end{equation}
	\end{subequations}
	for some $\xi > 0$. Then for any sequence of good vertices $(v_n : n \in \NN)$ such that $v_n \in V_{\omega_n}(O)$,
	\[
		p(n; v_n) = 
		n^{-\frac{r}{2}-\abs{\Psi^{++}}}
		\varrho^n e^{-n\phi(\delta_n)}
		\calP_\Psi(\omega_n) \calQ_\Psi(t_n) \big(1 + E_n(\delta_n)\big)
	\]
	with
	\[
		\abs{E_n(\delta_n)} \leq C \sum_{\alpha \in \Psi^+} \big(\sprod{\delta_n}{\alpha} + n^{-1}\big)
		\dist(\delta_n, \partial \calM)^{-2\eta}
	\]
	where $t_n = (I - T_\Psi) s_n$, $s_n = \nabla \phi(\delta_n)$, and
	\[
		\phi(\delta) = \max\big\{\sprod{u}{\delta} - \log \kappa(u) : u \in \mathfrak{a}\big\}.
	\]
\end{theorem}
\begin{proof}
	We consider the standard case. The necessary changes in the exceptional case are explained in Appendix \ref{app:a}.
	Let us recall that $\tau_\alpha \geq 1$ for all $\alpha \in \Phi$. By the inversion formula \eqref{eq:9}, we can write
	\[
		p(n; v_n) = \bigg(\frac{1}{2 \pi} \bigg)^r \frac{W_0(q^{-1})}{\abs{W_0}} \int_{U_0}
		\big(h_{i\theta}(A)\big)^n
		\overline{P_{\omega_n}(i\theta)} \frac{{\: \rm d} \theta}{\abs{\bfc(i\theta)}^2}.
	\]
	Using the definition of $P_\omega$ and $W_0$-invariance of the integrand, we get
	\begin{align*}
		p(n; v_n) 
		=
		\bigg(\frac{1}{2 \pi} \bigg)^r
		\chi_0(\omega_n)^{-\frac{1}{2}}
		\varrho^n
		\calF_n(\omega_n)
	\end{align*}
	where
	\[
		\calF_n(\omega) = \int_{U_0}
		\kappa(i\theta)^n e^{-i \sprod{\theta}{\omega}} \frac{{\rm d}\theta}{\bfc(i\theta)}.
	\]
	Suppose now that $\theta_0 \in U_0$ is such that $\kappa(i \theta_0) = e^{i t}$ for some $t \in [-\pi, \pi)$.
	Since $\kappa(i\theta_0)$ is a convex combination of complex numbers from the unit circle,
	$\kappa(i\theta_0) = e^{i t}$ if and only if $e^{i \sprod{\theta_0}{v}} = e^{i t}$ for all $v \in \calV$. Therefore,
	if $p(n; x) > 0$ for some $x \in V_\omega(O)$, then $e^{i \sprod{\theta_0}{\omega}} = e^{i n t}$. Since the random
	walk is irreducible and aperiodic, for all sufficiently large $n$ we have $p(n; x) > 0$, thus
	\[
		e^{i n t} =	e^{i \sprod{\theta_0}{\omega}} = e^{i (n+1) t}
	\]
	which implies that $t = 0$. Therefore, $e^{i \sprod{\theta_0}{\omega}} = 1$ for all $\omega \in P^+$, which entails 
	that $\theta_0 = 0$.

	Next, we observe that we can shift the integrand. In fact, we have the following claim.
	\begin{claim}
		\label{clm:1}
		For any $u \in \mathfrak{b}$ where
		\[
			\mathfrak{b} 
			= \left\{x \in \mathfrak{a} : \sprod{x}{\alpha\spcheck} > 
			-\log \tau_{\alpha} - \tfrac{1}{2} \log \tau_{\frac{\alpha}{2}}
			\text{ for all } \alpha \in \Phi^+ \right\},
		\] 
		we have
		\[
			\calF_n(\omega) 
			=
			\int_{U_0}
	    	\kappa(u+i\theta)^n e^{-\sprod{u + i\theta}{\omega}} \frac{{\rm d}\theta}{\bfc(u + i\theta)}.
		\]
	\end{claim}
	Let us first observe that the integrand is $2\pi Q$-periodic. Hence, the value of the integral stays unchanged if we
	replace $U_0$ by any other fundamental domain for the action of $2\pi Q$ on $\mathfrak{a}$. It will be more convenient
	to replace $U_0$ with
	\[
		V = \big\{\theta = \theta_1 \alpha_1 + \ldots + \theta_r \alpha_r : \theta_j \in [-\pi, \pi] \big\}.
	\]
	Now, it is easy to see that for any $\lambda \in P$ we have
	\begin{equation}
		\label{eq:49}
		\int_V e^{i \sprod{\theta}{\lambda}} e^{-i\sprod{\theta}{\omega}} {\: \rm d}\theta
		=
		\int_V e^{\sprod{u + i \theta}{\lambda}} e^{-\sprod{u+i\theta}{\omega}} {\: \rm d} \theta.
	\end{equation}
	Since $e^{-\sprod{u}{\alpha\spcheck}} \tau_\alpha^{-1} \tau_{\alpha/2}^{-1/2} < 1$, we can write
	\begin{align*}
		\frac{1-\tau_{\alpha/2}^{-1/2} e^{-\sprod{u+i\theta}{\alpha\spcheck}}}
		{1 - \tau_{\alpha}^{-1} \tau_{\alpha/2}^{-1/2} e^{-\sprod{u+i\theta}{\alpha\spcheck}}}
		&=
		\sum_{n \geq 0} \tau_\alpha^{-n} \tau_{\alpha/2}^{-n/2} e^{-n \sprod{u+i\theta}{\alpha\spcheck}}
		-
		\tau_\alpha
		\sum_{n \geq 1} \tau_\alpha^{-n} \tau_{\alpha/2}^{-n/2} e^{-n \sprod{u+i\theta}{\alpha\spcheck}}
		\\
		&=
		1 + (1-\tau_\alpha) \sum_{n_\alpha \geq 1} \tau_\alpha^{-n_\alpha} \tau_{\alpha/2}^{-n_\alpha/2} 
		e^{-n_\alpha \sprod{u+i\theta}{\alpha\spcheck}}
	\end{align*}
	where the series is uniformly and absolutely convergent. Hence, 
	\begin{align*}
		\frac{1}{\bfc(u + i \theta)} &=
		\prod_{\alpha \in \Phi_+} 
		\bigg(
		1 + (1 - \tau_\alpha) \sum_{n_\alpha \geq 1} \tau_\alpha^{-n_\alpha} \tau_{\alpha/2}^{-n_\alpha/2}
		    e^{-n_\alpha \sprod{u+i\theta}{\alpha\spcheck}}
		\bigg)\\
		&= 
		\sum_{\alpha\spcheck \in Q\spcheck} c(\alpha\spcheck, q) e^{\sprod{u+i\theta}{\alpha\spcheck}}.
	\end{align*}
	Thus, by the identity \eqref{eq:49}, we obtain
	\begin{align*}
		&\int_V \kappa(u+i\theta)^n e^{-\sprod{u+i\theta}{\omega}} \frac{{\rm d}\theta}{\bfc(u+i\theta)} \\
		&\qquad\qquad=
		\sum_{\alpha\spcheck \in Q\spcheck} c(\alpha\spcheck, q) 
		\sum_{v_1, \ldots, v_n \in \calV}
		\prod_{j = 1}^n c_{v_j}
		\int_V
		e^{\sprod{u+i\theta}{\sum_{j = 1}^n v_j}} e^{-\sprod{u+i\theta}{\omega}} e^{\sprod{u+i\theta}{\alpha\spcheck}}
		{\: \rm d}\theta\\
		&\qquad\qquad=
		\sum_{\alpha\spcheck \in Q\spcheck} c(\alpha\spcheck, q) 
		\sum_{v_1, \ldots, v_n \in \calV}
		\prod_{j = 1}^n c_{v_j}
		\int_V
		e^{\sprod{i\theta}{\sum_{j = 1}^n v_j}} e^{-i\sprod{\theta}{\omega}} e^{i \sprod{\theta}{\alpha\spcheck}}
		{\: \rm d}\theta\\
		&\qquad\qquad=
		\int_V \kappa(i\theta)^n e^{-i\sprod{\theta}{\omega}} \frac{{\rm d}\theta}{\bfc(i\theta)},
	\end{align*}
	proving the claim.

	Thanks to Claim \ref{clm:1}, we can choose the shift $u \in \mathfrak{b}$ depending on $\omega_n$ in such a way
	that the critical point of the phase function is at $\theta = 0$.
	
Let us notice that, if $p(n; v_n) > 0$ then $\delta_n = n^{-1} \omega_n \in \cl \calM$. Since
$\dist(\delta_n, \partial \calM) > 0$, by Theorem \ref{thm:1}, there is the unique $s_n = s(\delta_n)$ such that
$\nabla \log \kappa(s_n) = \delta_n$. Hence, by Claim \ref{clm:1}, we can write
\begin{align*}
	\calF_n(\omega_n)  
	= 
	e^{-n \phi(\delta_n)}
	\int_{U_0} \bigg(\frac{\kappa(s_n + i\theta)}{\kappa(s_n)}\bigg)^n 
	e^{-i \sprod{\theta}{\omega_n}} \frac{{\rm d} \theta}{\bfc(s_n + i \theta)},
\end{align*}
where
\[
	\phi(\delta) = \sprod{\delta}{s} - \log \kappa(s).
\]
Let $\epsilon > 0$ be small enough to satisfy \eqref{eq:52} and \eqref{eq:38}. We set
\[
	U_\epsilon = \big\{\theta \in \mathfrak{a} : \sprod{\theta}{\alpha\spcheck} < \epsilon, \text{ for all }
	\alpha \in \Phi \big\}.
\]
With a help of Theorem \ref{thm:2}, we can show that the integral over $U_0 \setminus U_\epsilon$ is negligible. 
To see this, we write
\begin{align}
	\nonumber
	1- \bigg|\frac{\kappa(u+i\theta)}{\kappa(u)}\bigg|^2 
	&= 1 - \sum_{v, v' \in \calV} 
	\frac{c_v e^{\sprod{u+i\theta}{v}}}{\kappa(u)}
	\cdot
	\frac{c_{v'} e^{\sprod{u-i\theta}{v'}}}{\kappa(u)} \\
	\label{eq:20}
	&=2 \sum_{v, v' \in \calV} \scoef{u}{v} \cdot \scoef{u}{v'}
	\Big(\sin \Big\langle \frac{\theta}{2}, v-v'\Big\rangle\Big)^2.
\end{align}
We need to show that for each $\theta \in U_0 \setminus U_\epsilon$, there is
always at least one nonzero term in \eqref{eq:20}. In fact, we show the following statement.
\begin{claim}
	\label{clm:2}
	For every $v_0 \in \calV$, there is $\xi > 0$ such that for all $\theta \in U_0 \setminus U_\epsilon$ there is
	$v' \in \calV$ satisfying
	\[
		\Big|
		\sin \Big\langle \frac{\theta}{2}, v' - v_0 \Big\rangle
		\Big|
		\geq
		\xi.
	\]
\end{claim}
For the proof, we assume to contrary that for some $v_0 \in \calV$ and all $m \in \NN$ there is
$\theta_m \in U_0 \setminus U_\epsilon$ such that for all $v \in \calV$,
\[
	\Big|
    \sin \Big\langle \frac{\theta_m}{2}, v - v_0 \Big\rangle
    \Big|
    \leq
	\frac{1}{m}.
\]
By compactness of $U_0 \setminus U_\epsilon$, there is a subsequence $(\theta_{m_k} : k \in \NN)$ convergent to
$\theta' \in U_0 \setminus U_\epsilon$. Then for all $v \in \calV$,
\[
	\sin \Big\langle \frac{\theta'}{2}, v -v_0\Big\rangle = 0,
\]
and thus $\abs{\kappa(i\theta')} = 1$, which is impossible since $0 = \theta' \notin U_0 \setminus U_\epsilon$.

Before we apply Claim \ref{clm:2}, we select any $v_0 \in \calV$ satisfying
\[
	\sprod{s_n}{v_0} = \max\big\{\sprod{s_n}{v} : v \in \calV\big\},
\]
thus $e^{\sprod{s_n}{v_0}} \geq \kappa(s_n)$. By Claim \ref{clm:2} and \eqref{eq:20}, for each
$\theta \in U_0 \setminus U_\epsilon$ there is $v' \in \calV$ such that
\begin{align*}
	1 - \bigg|\frac{\kappa(s_n + i \theta)}{\kappa(s_n)}\bigg|^2 
	&\geq 2 c_{v_0} \scoef{s_n}{v'} \xi^2 \\
	&\geq 2\xi^2 \min\big\{c_{v}^2 : v \in \calV\big\} \cdot \frac{e^{\sprod{s_n}{v'}}}{\kappa(s_n)}.
\end{align*}
Although $v'$ may depend on $\theta$ and $n$, by Theorem \ref{thm:2}, there are $C> 0$ and $\eta \geq 1$ such that for all
$\theta \in U_0 \setminus U_\epsilon$ and all $n \in \NN$,
\[
	1 - \bigg|\frac{\kappa(s_n + i \theta)}{\kappa(s_n)}\bigg|^2 \geq C \dist(\delta_n, \partial \calM)^\eta.
\]
Hence,
\[
	\bigg|\frac{\kappa(s_n + i \theta)}{\kappa(s_n)}\bigg|^2 \leq
	1 - C \dist(\delta_n, \partial \calM)^\eta \leq e^{-C \dist(\delta_n, \partial \calM)^\eta}.
\]
Since
\[
	\Bigg|
	\frac
	{1 - \tau_{\alpha/2}^{-1/2} e^{-\sprod{s_n+i\theta}{\alpha\spcheck}}}
	{1 - \tau_\alpha^{-1} \tau_{\alpha/2}^{-1/2} e^{-\sprod{s_n + i \theta}{\alpha\spcheck}}}
	\Bigg|
	\leq
	\frac{2}{1-\tau_\alpha^{-1} \tau_{\alpha/2}^{-1/2}},
\]
we conclude that
\begin{equation}
	\label{eq:79}
	\bigg|
	\int_{U_0 \setminus U_\epsilon}
	\bigg(\frac{\kappa(s_n + i \theta)}{\kappa(s_n)}\bigg)^n e^{-i\sprod{\theta}{\omega_n}}
	\frac{{\rm d} \theta}{\bfc(s_n + i\theta)}
	\bigg|
	\leq
	C \exp\big\{-C' n \dist(\delta_n, \partial \calM)^{\eta}\big\}.
\end{equation}
The argument above reduced the problem to studying the integral over $U_\epsilon$. Observe that, by \eqref{eq:3}, 
the function $\Log \kappa$ is analytic in a strip $\mathfrak{a} + i B$ where $\Log$ denotes the principal value of
the complex logarithm and
\[
	B = \Big\{\theta \in \mathfrak{a} : \norm{\theta} < \big(2 \cdot \max\{\norm{v} : v \in \calV\}\big)^{-1} \Big\}.
\]
Let $F_n$ be a function on $\mathfrak{b}$ defined by
\begin{equation}
	\label{eq:85}
	F_n(x) = \int_{U_\epsilon} e^{n \varphi(x, \theta)} \frac{{\rm d} \theta}{\bfc(x+i\theta)}
\end{equation}
wherein
\begin{equation}
	\label{eq:62}
	\varphi(x, \theta) = \Log \kappa(x+i\theta) - \Log \kappa(x) - i \sprod{\theta}{\nabla \log \kappa(x)},
\end{equation}
provided that $\epsilon$ is sufficiently small to guarantee that
\begin{equation}
	\label{eq:52}
	U_\epsilon \subseteq B.
\end{equation}
Hence, by \eqref{eq:79},
\[
	\calF_n(\omega_n) = e^{-n\phi(\delta_n)} \big(F_n(s_n) + E_n(\delta_n)\big)
\]
where
\[
	\abs{E_n(\delta_n)} \leq C\exp\big\{-C' n \dist(\delta_n, \partial \calM)^{\eta} \big\}. 
\]
Therefore, our aim is to find the asymptotic behavior of $(F_n(s_n) : n \in \NN)$. We notice that $F_n(x)$ is
an oscillatory integral depending depending on $x \in \mathfrak{b}$, and its asymptotic behavior depends on stabilizer
subgroup of $W_0$ with respect to $x$.

We start by proving some estimates on $\varphi$. Since for any $u, u' \in \mathfrak{a}$ and $z \in \mathfrak{a} + i B$
we have
\[
	D_u D_{u'} \Log \kappa(z) = \frac{1}{2} \sum_{v, v' \in \calV} \scoef{z}{v} \cdot \scoef{z}{v'} \sprod{u}{v-v'}
	\sprod{u'}{v-v'},
\]
by Lemma \ref{lem:2}, there is $C > 0$ such that for all $\sigma \in \NN^r$,
\begin{equation}
	\label{eq:67}
	\Big|
	\der{\sigma}  \big(D_u D_{u'} \Log \kappa(z) \big)
	\Big|
	\leq
	C^{\abs{\sigma}} \sigma! \sqrt{B_x(u, u) B_x(u', u')}
\end{equation}
where $z = x + i \theta$. By using the integral form for the reminder, we can write
\[
	\psi(x, \theta) = \varphi(x, \theta) - \frac{1}{2}B_x(\theta, \theta) = 
	-\frac{i}{2} \int_0^1 (1-t)^2 D_\theta^3 \Log \kappa(x+i \theta t) {\: \rm d}t.
\]
In view of \eqref{eq:67}, there is $c > 0$ such that for all $x \in \mathfrak{a}$ and $\theta \in B$,
\begin{equation}
	\label{eq:42}
	\abs{\psi(x, \theta)} \leq c \norm{\theta} B_x(\theta, \theta).
\end{equation}
Therefore, by choosing
\begin{equation}
	\label{eq:38}
	\epsilon < 
	\bigg(4 \cdot 
	\sup\bigg\{\frac{\abs{\psi(a, b)}}{\norm{b} B_a(b, b)} : a \in \mathfrak{a}, b \in B 
	\bigg\}\bigg)^{-1},
\end{equation}
if $\norm{\theta} < \epsilon$, then we may estimate
\begin{equation}
	\label{eq:23}
	\abs{\psi(x, \theta)} \leq \tfrac{1}{4} B_x(\theta, \theta).
\end{equation}
Hence,
\begin{equation}
	\label{eq:53}
	\Re \varphi(x, \theta) \leq -\tfrac{1}{4} B_x(\theta, \theta). 
\end{equation}
We next observe that the function $F_n$ is real-analytic on $\mathfrak{b}$. To see this, let us choose in $\mathfrak{a}$
coordinates $x_j = \sprod{x}{\alpha_j}$. By Lemma \ref{lem:2}, there is $C > 0$ such that for all $\mu \in \NN^r$
and $x + i \theta \in \mathfrak{b} + iU_\epsilon$,
\[
	\bigg|\der{\mu}_x\bigg(\frac{1}{\bfc(x+i\theta)}\bigg) \bigg| \leq C^{\abs{\mu}+1} \mu!.
\]
For $\nu \in \NN^r$, by Lemma \ref{lem:1} together with estimates \eqref{eq:67} and \eqref{eq:53}, we have
\[
	\big| \der{\nu}_x  e^{n\varphi(x, \theta)} \big| 
	\leq 
	C^{\abs{\nu}+1} 
	\sum_{\pi \in \Pi_\nu} c_\pi e^{-\frac{n}{4} B_x(\theta, \theta)} 
	(nB_x(\theta, \theta))^m \prod_{j=1}^m B_j!.
\]
Since
\[
	e^{-\frac{n}{4} B_x(\theta, \theta)} (nB_x(\theta, \theta))^m \leq 8^m m! e^{-\frac{n}{8} B_x(\theta, \theta)},
\]
by \eqref{eq:22}, we obtain
\begin{align}
	\nonumber
	\big| \der{\nu}_x  e^{n\varphi(x, \theta)} \big| 
	&\leq
	C^{\abs{\nu}+1} e^{-\frac{n}{8}B_x(\theta, \theta)} 
	\sum_{\pi \in \Pi_\nu} 8^m c_\pi m! \prod_{j=1}^m B_j!
	\\
	\label{eq:54} 
	&\leq 
	C^{\abs{\nu}+1} \nu! e^{-\frac{n}{8} B_x(\theta, \theta)}.
\end{align}
Therefore,
\begin{align*}
	\big|\der{\sigma}_x F_n (x) \big| 
	&\leq 
	C^{\abs{\sigma}+1} \sigma! \int_{U_\epsilon} e^{-\frac{n}{8} B_x(\theta, \theta)} \dth\\
	&\leq
	C^{\abs{\sigma}+1} \sigma! n^{-\frac{r}{2}} \big(\det B_x \big)^{-\frac{1}{2}},
\end{align*}
which implies that $F_n$ is real-analytic.

We start with the case $J \neq \emptyset$. Our aim is to describe the asymptotic behavior of $F_n(x)$ close to walls. 
Let $x_0 \in \partial \mathfrak{a}_+$ be such that $\sprod{x_0}{\alpha_j\spcheck} = 0$ for all $j \in J$. By $\Gamma_\Psi$
we denote the set of all multi-indices $\gamma$ such that $\der{\gamma} \bpi_\Psi \neq 0$ where $\bpi_\Psi$
is defined in \eqref{eq:21}. The following theorem is our key tool.
\begin{theorem}
	\label{thm:3}
	There are $C, C', R > 0$ such that for all $h \in \mathfrak{a}_\Psi$, $\norm{h} \leq R$,
	\begin{equation}
		\label{eq:57}
		F_n(x_0+h) =  \big(\det B_{x_0} \big)^{-\frac{1}{2}}
	\bpi_\Psi \big( B_{x_0}^{-1} \rho \big) 
	\sum_{\gamma \in \Gamma_\Psi} 
	\big(B_{x_0} h\big)^\gamma 
	n^{-\frac{r}{2}-\abs{\Psi^{++}} + \abs{\gamma}} A^\gamma_n(x_0, h)
	+ E_n(x_0, h)
	\end{equation}
	where $A^\gamma_n(x_0, h) = a_\gamma(x_0) + g_\gamma(x_0, h) + E^\gamma_n(x_0, h)$, and
	\begin{equation}
		\label{eq:59}
		\begin{aligned}
		&|a_\gamma(x_0)| \leq C, \qquad\qquad 
		&&|E_n^{\gamma}(x_0, h)| \leq C n^{-1} \mnorm{B_{x_0}^{-1}},\\
		&|g_{\gamma}(x_0, h)|   \leq C \norm{h},
		&&|E_n(x_0, h)| \leq C \exp\big\{-C' n \mnorm{B_{x_0}^{-1}}^{-1}\big\}.
		\end{aligned}
	\end{equation}
	The constants $C$, $C'$ and $R$ are independent of $x_0$ and $n$.
\end{theorem}
\begin{proof}
	We start by changing coordinates in $\mathfrak{a}$, namely for $x \in \mathfrak{a}$ we write
	\[
		x_j = \begin{cases}
			\sprod{x}{\alpha_j} & \text{if } j \in J, \\
			\sprod{x}{(I-T_\Psi)\alpha_j} & \text{if } j \in I_0 \setminus J.
		\end{cases}
	\]
	Therefore,
	\[
		\partial_j = 
		\begin{cases}
			D_{T_\Psi \lambda_j} & \text{if } j \in J, \\
			D_{\lambda_j} & \text{if } j \in I_0 \setminus J.
		\end{cases}
	\]
	Observe that for $j \in I_0 \setminus J$, we have $T_\Psi \lambda_j = 0$, because for any $k \in J$,
	\[
		\sprod{T_\Psi \lambda_j}{\alpha_k} = \sprod{\lambda_j}{\alpha_k} = 0.
	\]
	Since for all $w \in W_0$ and $x \in \mathfrak{a}$,
	\[
		B_{w \cdot x}(w \cdot u, w \cdot u') 
		= D_{w \cdot u} D_{w \cdot u'} \log \kappa(w \cdot x) = D_u D_{u'} \log \kappa(x) = B_x(u, u),
	\]
	for any $\alpha \in \Psi$, we have
	\[
		B_{x_0}(r_\alpha u, r_\alpha u') = B_{x_0}(u, u').
	\]
	Thus, for $k \in I_0$, $j \in J$, $j \neq k$, we have
	\[
		B_{x_0}(\lambda_k, \alpha_j) = B_{x_0}(r_j\lambda_k, r_j \alpha_j)
		= - B_{x_0}(\lambda_k, \alpha_j) = 0.
	\]
	Therefore, by setting
	\[
		\rho = \frac{1}{2} \sum_{\alpha \in \Phi^{++}} \alpha\spcheck = \sum_{j = 1}^r \lambda_j,
	\]
	we have
	\[
		B_{x_0} \alpha_j = B_{x_0}(\lambda_j, \alpha_j) \alpha_j = B_{x_0}(\rho, \alpha_j) \alpha_j.
	\]
	Hence, 
	\begin{equation}
		\label{eq:64}
		\sprod{B_{x_0} \rho}{\alpha_j} = \sprod{B_{x_0} \lambda_j}{\alpha_j}
		=\sprod{B_{x_0} T_\Psi \lambda_j}{\alpha_j}.
	\end{equation}
	Moreover, we have
	\begin{align}
		\nonumber
		T_\Psi B_{x_0} T_\Psi \lambda_j 
		&= \sum_{k \in J} \sprod{T_\Psi B_{x_0} T_{\Psi} \lambda_j}{\alpha_k} T_\Psi \lambda_k \\
		\label{eq:65}
		&= \sprod{B_{x_0} \rho}{\alpha_j} T_\Psi \lambda_j.
	\end{align}
	Without loss of generality, we may replace $\epsilon$ by any $0 < \epsilon' < \epsilon$. Indeed, by \eqref{eq:53} we
	have
	\[
		\bigg|\int_{U_\epsilon \setminus U_{\epsilon'}}	e^{n \varphi(x, \theta)} \frac{{\rm d} \theta}{\bfc(x+i\theta)}
		\bigg|
		\leq
		C \int_{U_\epsilon \setminus U_{\epsilon'}} e^{-\frac{n}{4} B_x(\theta, \theta)} {\: \rm d}\theta.
	\]
	Since the mapping $\mathfrak{a} \ni x \mapsto B_x(\theta, \theta)$ is
	real-analytic, by \eqref{eq:67} we have
	\[
		B_x(\theta, \theta) \geq \big(1 - C \norm{h}\big) B_{x_0}(\theta, \theta) \geq 
		\frac{1}{2} B_{x_0}(\theta, \theta),
	\]
	provided that $\norm{h} < (2 C)^{-1}$. Hence,
	\[
		\bigg|\int_{U_\epsilon \setminus U_{\epsilon'}} e^{n \varphi(x, \theta)} \frac{{\rm d} \theta}{\bfc(x+i\theta)}
        \bigg|
        \leq
		C \exp\big\{-\tfrac{1}{8} n \mnorm{B_{x_0}^{-1}}^{-1} \big\}.
	\]
	We next define a function $f$ on $\mathfrak{b}+iU_\epsilon$ by the formula
	\begin{align*}
		f(z) = \frac{1}{\bfc(z) \bpi_\Psi(z)}
		= \frac{1}{\bfb(z)} 
		\bigg(\prod_{\alpha \in \Phi^{++}\setminus\Psi^{++}} 1 - e^{-\sprod{z}{\alpha\spcheck}}\bigg)
		\bigg(\prod_{\alpha \in \Psi^{++}} \frac{1 - e^{-\sprod{z}{\alpha\spcheck}}}{\sprod{z}{\alpha\spcheck}}\bigg).
	\end{align*}
	Observe that each factor is real-analytic on $\mathfrak{b} + i U_\epsilon$, thus there is $C > 0$ such that
	for all $\mu, \nu \in \NN^r$ and $x+i\theta \in \mathfrak{b} + i U_\epsilon$,
	\begin{equation}
		\label{eq:55}
		\big\lvert \der{\mu}_\theta \der{\nu}_x f(x+i\theta) \big\rvert
		\leq C^{\abs{\nu}+\abs{\mu}+1} \nu! \mu!.
	\end{equation}
	We are going to show that there are positive constants $C$ and $C'$ such that for
	any $\sigma \in \mathbb{N}^J$,
	\begin{equation}
		\label{eq:56}
		\der{\sigma} F_n(x_0) 
		=
		\big(\det B_{x_0} \big)^{-\frac{1}{2}}
		\bpi_\Psi 
		\brho{-1}
		\sum_{\atop{\gamma \in \Gamma_\Psi}{\gamma \preceq \sigma}}
		\brho{}^{\gamma} 
		n^{- \frac{r}{2} - \abs{\Psi^{++}} + \abs{\gamma}}
		A^{\gamma}_{n, \sigma}(x_0)
		+ E_{n, \sigma}(x_0)
	\end{equation}
	where $A_{n, \sigma}^{\gamma} (x_0) = a^\sigma_{\gamma}(x_0) + E^\gamma_{n, \sigma}(x_0)$ and
	\begin{equation}
		\label{eq:60}
		\begin{aligned}
		&|a_\gamma^\sigma(x_0)| \leq C^{\abs{\sigma}+1} \sigma!, \qquad 
		&&|E_{n, \sigma}^{\gamma}(x_0)| 
		\leq C^{\abs{\sigma}+1} \sigma! n^{-1} \mnorm{B_{x_0}^{-1}},\\
		&
		&&|E_{n, \sigma}(x_0)| \leq C^{\abs{\sigma}+1} \sigma! \exp\big\{-C' n \mnorm{B_{x_0}^{-1}}^{-1}\big\}.
		\end{aligned}
	\end{equation}
	Recall that for two multi-indices $\sigma, \gamma \in \NN^r$, we write $\gamma \preceq \sigma$ if and only if
	$\gamma(j) \leq \sigma(j)$ for all $j \in I_0$. Let us check that \eqref{eq:56} implies \eqref{eq:57}. Notice that, 
	by \eqref{eq:64}, for $\gamma \in \Gamma_\Psi$,
	\[
		\big(B_{x_0} \rho\big)^\gamma h^\gamma 
		= \prod_{j \in J} \big(\sprod{B_{x_0} \rho}{\alpha_j} h_j\big)^{\gamma(j)} 
		= \prod_{j \in J} \big(\sprod{B_{x_0} T_\Psi \lambda_j}{\alpha_j} h_j\big)^{\gamma(j)} 
		= \big(B_{x_0} h\big)^\gamma.
	\]
	Since $F_n$ is real-analytic, for $h \in \mathfrak{a}_\Psi$, $\norm{h} < C^{-1}$ we have
	\[
		F_n(x_0+h) 
		=\big(\det B_{x_0} \big)^{-\frac{1}{2}}
		\bpi_\Psi
        \brho{-1}
		\sum_{\gamma \in \Gamma_\Psi}
		\big(B_{x_0} h\big)^\gamma
		n^{-\frac{r}{2} - \abs{\Psi^{++}} + \abs{\gamma}}
		\sum_{\sigma \succeq \gamma}
		h^{\sigma-\gamma} A^\gamma_{n, \sigma}(x_0) + E_n(x_0, h).
	\]
	Thus
	\[
		a_\gamma(x_0) = \frac{1}{\gamma!} a_\gamma^\gamma(x_0), \qquad g_\gamma(x_0, h) 
		= \sum_{\sigma \succ \gamma} \frac{1}{\sigma!} a_\gamma^\sigma(x_0) h^{\sigma-\gamma},
	\]
	and
	\[
		E_n^\gamma(x_0, h) = \sum_{\sigma \succeq \gamma} \frac{1}{\sigma!} E^\gamma_{n, \sigma}(x_0) h^{\sigma-\gamma},
		\qquad
		E_n(x_0, h) = \sum_{\sigma \in \NN^J} \frac{1}{\sigma!}E_{n,\sigma}(x_0) h^\sigma.
	\]
	The estimates \eqref{eq:59} clearly follow from \eqref{eq:60}.
	
	For the proof of \eqref{eq:56}, in view of \eqref{eq:54} and \eqref{eq:55}, we can write
	\[
		\der{\sigma} F_n(x_0) = 
		\sum_{\atop{\mu + \nu = \sigma}{\mu \in \Gamma_\Psi}}
		\frac{\sigma!}{\nu!\mu!}
		\int_{U_\epsilon}
   		\der{\nu}_x
		\Big|_{x = x_0}
    	\Big( e^{n\varphi(x, \theta)} f(x + i\theta) \Big)
	    \der{\mu} \bpi_\Psi(i\theta) \dth.
	\]
	For $\mu + \nu = \sigma$, $\mu \in \Gamma_\Psi$, we set
	\begin{equation}
		\label{eq:61}
		I_n^{\mu\nu} = \int_{U_\epsilon}
		\der{\mu}_\theta \der{\nu}_x
		\Big|_{x = x_0} 
		\Big( e^{n\varphi(x, \theta)} f(x + i\theta) \Big) 
		\bpi_\Psi(\theta) \dth.
	\end{equation}
	Then by the integration by parts, one can show that
	\begin{align*}
 		\bigg| 
		\int_{U_\epsilon} 
		\der{\nu}_x	
		\Big|_{x = x_0} 
		\Big( e^{n \varphi(x, \theta)} f(x + i\theta) \Big)
		\der{\mu} \bpi_\Psi(\theta) \dth 
		- (-1)^{\abs{\mu}} I^{\mu\nu}_n \bigg|
		& \leq C^{\abs{\sigma} + 1} \nu! \mu! 
		\int_{\partial U_\epsilon} e^{-\frac{n}{8} B_{x_0}(\theta, \theta)} {\: \rm d}S(\theta) \\
		& \leq C^{\abs{\sigma} + 1} \nu! \mu! 
		\exp\big\{-C' n  \lVert B_{x_0}^{-1} \rVert^{-1}\big\},
	\end{align*}
	because for $\theta \in \mathfrak{a}$,
	\[
		\big\lVert B_{x_0}^{-1} \big\rVert^{-1} \sprod{\theta}{\theta} \leq B_{x_0}(\theta, \theta).
	\]
	In this way, we have reduced the matter to finding the asymptotic of $I^{\mu\nu}_n$. Let $\gamma$ denote a maximal
	multi-index belonging to $\Gamma_\Psi$ satisfying $\mu \preceq \gamma \preceq \sigma$.
	We claim that
	\begin{equation}
		\label{eq:36}
		I_n^{\mu\nu} = \big(\det B_{x_0} \big)^{-\frac{1}{2}}
		\bpi_\Psi \brho{-1} \brho{}^{\gamma} n^{- \frac{r}{2} - \abs{\Psi^{++}} + \abs{\gamma}} 
		A^{\mu\nu}_n(x_0)
	\end{equation}
	where $A^{\mu\nu}_n(x_0) = a_{\mu\nu}(x_0) + E_n^{\mu\nu}(x_0)$ and
	\begin{align*}
		&|a_{\mu\nu}(x_0)|    \leq C^{\abs{\sigma} + 1} \mu!\nu!, \qquad
		&&|E_n^{\mu\nu}(x_0)|  \leq C^{\abs{\sigma} + 1} \mu!\nu! n^{-1}
		\big\lVert B_{x_0}^{-1} \big\rVert.
	\end{align*}
	We emphasize that the degree of $\bpi_\Psi(\theta)$ is $\abs{\Psi^{++}}$, thus the main difficulty in showing
	\eqref{eq:36} lies in finding the remaining cancellations. To do so, by Leibniz's rule together with Lemma \ref{lem:1}
	we express the integrand in \eqref{eq:61} as a linear combination of terms of a form
	\[
		n^m
		e^{n\varphi(x_0, \theta)} 
		\bigg(
		\prod_{j = 1}^m
		\der{\mu_j}_\theta \der{\nu_j}_x \varphi(x_0, \theta)\bigg) \der{\mu_0}_\theta\der{\nu_0}_x f(x_0+i\theta)
	\]
	where $m \in \NN$, $(\nu_j : 0 \leq j \leq m)$ and $(\mu_j : 0 \leq j \leq m)$ are sequences of multi-indices such
	that $\abs{\mu_j}+\abs{\nu_j} \geq 1$ for $j \geq 1$ and
	\[
		\mu = \sum_{j = 0}^m \mu_j, \qquad \nu = \sum_{j = 0}^m \nu_j.
	\]
	Therefore, to prove \eqref{eq:36} it is enough to establish the asymptotic of
	\begin{equation}
		\label{eq:41}
		I_n = \int_{U_\epsilon} 
		e^{n \varphi(x_0, \theta)} 
		\Big( \prod_{j=0}^m g_j(\theta) \Big)
		\bpi_\Psi(\theta) \dth
	\end{equation}
	where
	\[
		g_0(\theta) = \frac{1}{\nu_0! \mu_0!} \der{\mu_0}_\theta\der{\nu_0}_x f(x_0+i\theta),
	\]
	and for $j \in \{1, \ldots, m\}$,
	\[
		g_j(\theta) = \frac{1}{\nu_j! \mu_j!} \der{\mu_j}_\theta \der{\nu_j}_x \varphi(x_0, \theta).
	\]
	We claim that
	\begin{equation}
 		\label{eq:19}
		I_n = \big(\det B_{x_0}\big)^{-\frac{1}{2}} 
		\bpi_\Psi \brho{-1} \brho{}^{\gamma}
		n^{-\frac{r}{2} - \abs{\Psi^{++}} - m + \abs{\gamma}} A_n(x_0)
	\end{equation}
	where $A_n(x_0) = a(x_0) + E_n(x_0)$ and
	\begin{align*}
		&|a(x_0)|    \leq C^{\abs{\sigma} + 1}, \qquad
		&&|E_n(x_0)|  \leq C^{\abs{\sigma} + 1} n^{-1} \big\lVert B_{x_0}^{-1} \big\rVert.
	\end{align*}
	For the proof, let $J_\gamma = \{j \in J : \gamma + e_j \in \Gamma_\Psi\}$. We 
	introduce an auxiliary root system
	\[
		\Upsilon = \big\{\alpha \in \Psi : \sprod{\alpha}{\lambda_j} = 0 \text{ if } j \notin J_\gamma\big\}.
	\]
	This is the main idea that the root subsystem $\Upsilon$ describes the remaining symmetries of the
	integrand in \eqref{eq:41}.

	For a multi-index $\beta \in \NN^r$, we set
	\[
		\beta'(j) = 
		\begin{cases}
			\beta(j) & \text{if } \alpha_j \in \Upsilon,\\
			0        & \text{otherwise,}
		\end{cases}
	\]
	and $\beta'' = \beta - \beta'$. Let 
	\[
		\Lambda_0 = \big\{1 \leq j \leq m: \abs{\nu_j'}+\abs{\mu_j'} = 0 \big\},
		\qquad\text{and}\qquad
		\Lambda_0^c = \big\{0 \leq j \leq m : j \notin \Lambda_0 \big\}.
	\]
	We construct a sequence of multi-indices $(\beta_j : 0 \leq j \leq m)$ as follows: if $j \in \Lambda_0$ then we take 
	$\beta_j \preceq \mu_j$, $\abs{\beta_j} = \min\{2, \abs{\mu_j}\}$, otherwise
	$\beta_j \preceq 2(\nu_j'+\mu_j')$, $\abs{\beta_j} = 2$ and $\abs{\beta_0} = 0$. Let 
	$\beta = \sum_{j=0}^m \beta_j$. By maximality of $\gamma$ we have $\mu'' \preceq\gamma''$ and $\gamma' = \nu' + \mu'$,
	thus
	\begin{equation}
		\label{eq:12}
		\beta \preceq \mu'' + 2 \gamma' \preceq \gamma + \gamma'.
	\end{equation}
	We set 
	\[
		K_0 = \abs{\Upsilon^{++}} + \sum_{j \in \Lambda_0} (2-\abs{\beta_j}) = \abs{\Upsilon^{++}} + 2m - \abs{\beta}.
	\]
	Let us notice that if $j \in \Lambda_0$, then the function $g_j$ is $W_0(\Upsilon)$-invariant. 
	Indeed, by \eqref{eq:62}, for all $\alpha \in \Psi$ and $\theta \in U_\epsilon$,
	\[
		\varphi(x_0, r_\alpha \theta) = \varphi(r_\alpha x_0, r_\alpha \theta) = \varphi(x_0, \theta).
	\]
	Since $j \in \Lambda_0$, if $\nu_j(k) + \mu_j(k) > 0$ then $\alpha_k \in \Psi \setminus \Upsilon$,
	thus $\sprod{\alpha}{\lambda_k} = 0$ for all $\alpha \in \Upsilon$. Hence, for all $\theta \in U_\epsilon$
	and $\alpha \in \Upsilon$,
	\[
		\der{\mu_j}_\theta \der{\nu_j}_x \varphi(x_0, r_\alpha \theta) 
		= \der{\mu_j}_\theta \der{\nu_j}_x \varphi(x_0, \theta).
	\]
	Therefore, we may write
	\begin{equation}
		\label{eq:45}
		I_n = \frac{1}{\abs{W_0(\Upsilon)}}
		\int_{U_\epsilon}
		e^{n\varphi(x_0, \theta)}
		G(\theta) \bpi_\Psi(\theta)\dth
	\end{equation}
	where
	\[
		G(\theta) = \bigg(\prod_{j \in \Lambda_0} g_j(\theta) \bigg) \sum_{w \in W_0(\Upsilon)} (-1)^{\ell(w)} 
		\prod_{j \in \Lambda_0^c} g_j(w \cdot \theta).
	\]
	To identify cancellations in $I_n$, we need the following two propositions.
	\begin{proposition}
		\label{prop:1}
		Let $\tau \in \mathbb{N}^r$, $\abs{\tau} \geq 2$. If $\tau(k) \geq 1$ for $k \in J$ then
		\[
			|\der{\tau} \log \kappa(x_0)|
			\leq
			C^{\abs{\tau}+1} \tau! \sprod{\alpha_k}{B_{x_0} \rho}.
		\]
	\end{proposition}
	\begin{proof}
		Let $h(x) = \der{\tau - e_k} \log \kappa(x)$. Suppose that for each $j \in I_0$ such that 
		$e_j \preceq \tau - e_k$, we have $\sprod{\lambda_j}{T_\Psi \lambda_k} = 0$. Then
		$\sprod{\lambda_j}{T_\Psi \lambda_k} \neq 0$ implies that $j \in J$ and $h(r_j x) = h(x)$. Hence,
		\[
			D_{\alpha_j} h(x_0) = - D_{r_j \alpha_j} h(x_0) = -D_{\alpha_j} h(x_0) = 0.
		\]
		Therefore,
		\[
			\der{\tau} \log \kappa(x_0) = 
			\der{}_k h(x_0) = \sum_{j \in J} \sprod{\lambda_j}{T_\Psi \lambda_k}
			D_{\alpha_j} h(x_0) = 0.
		\]
		Otherwise, there is $j \in I_0$ such that $e_j \preceq \tau - e_k$ and $\sprod{\lambda_j}{T_\Psi \lambda_k}
		\neq 0$. Since $j \in J$, by \eqref{eq:65},
		\[
			\sprod{B_{x_0}\rho}{\alpha_j}\sprod{T_\Psi \lambda_j}{\lambda_k}
			=\sprod{T_\Psi B_{x_0} T_\Psi \lambda_j}{ \lambda_k}
			=\sprod{\lambda_j}{T_\Psi B_{x_0} T_\Psi \lambda_k}
			=\sprod{B_{x_0}\rho}{\alpha_k}\sprod{\lambda_j}{T_\Psi \lambda_k}
		\]
		and so $\sprod{B_{x_0}\rho}{\alpha_j} = \sprod{B_{x_0}\rho}{\alpha_k}$. Therefore, by \eqref{eq:67},
		we have
		\begin{align*}
			\big|\der{\tau} \log \kappa(x_0) \big| 
			&\leq C^{\abs{\tau} + 1} \tau!
			\sqrt{B_{x_0}(T_\Psi \lambda_k, T_\Psi \lambda_k) B_{x_0}(T_\Psi \lambda_j, T_\Psi \lambda_j)} \\
			&=
			C^{\abs{\tau} + 1} \tau!
			\lvert T_\Psi \lambda_k \rvert \cdot \lvert T_\Psi \lambda_j \rvert
			\sqrt{ \sprod{B_{x_0}\rho}{\alpha_k}  \sprod{B_{x_0}\rho}{\alpha_j}},
		\end{align*}
		which finishes the proof.
	\end{proof}

	In the next proposition, we use our variant of marriage lemma, see Lemma \ref{lem:5}.
	\begin{proposition}
		\label{prop:3}
		We have
		\[
			\bpi_\Upsilon(\rho) \cdot \bpi_\Psi \big(B_{x_0} \rho\big)
			=
			\big(B_{x_0} \rho\big)^{\gamma''} 
			\bpi_\Psi(\rho) \cdot \bpi_\Upsilon \big(B_{x_0} \rho\big).
		\]
	\end{proposition}
	\begin{proof}
		Let $X = \Psi^{++}$ and $C_i = \{\alpha \in \Psi^{++}: \sprod{\alpha}{\lambda_i} > 0\}$.
		Then $\gamma \in \Gamma_\Psi$ is admissible (see Section \ref{subsec:1}). We choose any
		partial partition $(X_j : j \in J)$ corresponding to $\gamma$. 
		
		For any $u \in \mathfrak{a}$ and $\alpha \in \Psi^{++}$, we have
		\[
			\sprod{B_{x_0} \alpha}{u} = -\sprod{B_{x_0} r_\alpha \alpha}{u} = -\sprod{B_{x_0} \alpha}{u} + 
			\sprod{\alpha}{u} \sprod{B_{x_0} \alpha}{\alpha\spcheck},
		\]
		thus 
		\[
			\frac{\sprod{B_{x_0}\alpha}{u}}{\sprod{\alpha}{u}} 
			= \frac{\sprod{B_{x_0}\alpha}{\alpha}}{\sprod{\alpha}{\alpha}},
		\]
		provided that $\sprod{\alpha}{u} \neq 0$. Therefore, for $\alpha \in X_j$,
		\[
			\frac{\sprod{B_{x_0}\alpha}{\rho}}{\sprod{\alpha}{\rho}} 
			=
			\frac{\sprod{B_{x_0}\alpha}{\alpha}}{\sprod{\alpha}{\alpha}}
			=
			\frac{\sprod{B_{x_0}\alpha}{T_\Psi \lambda_j}}{\sprod{\alpha}{T_\Psi \lambda_j}},
		\]
		which together with \eqref{eq:65} implies that 
		\[
			\sprod{B_{x_0}\alpha}{\rho}
        	=
			\sprod{B_{x_0} \rho}{\alpha_j} \sprod{\alpha}{\rho}.
		\]
		Hence, by Lemma \ref{lem:5}, we obtain
		\begin{align*}
			\prod_{\alpha \in \Psi^{++} \setminus \Upsilon^{++}} \sprod{\alpha}{B_{x_0} \rho}
			&=\prod_{j: \alpha_j \in \Psi^{++} \setminus \Upsilon^{++}} 
			\prod_{\alpha \in X_j} \sprod{\alpha}{B_{x_0} \rho} \\
			&=\big(B_{x_0} \rho\big)^{\gamma''} \prod_{j: \alpha_j \in \Psi^{++} \setminus \Upsilon^{++}} 
			\prod_{\alpha \in X_j} 
			\sprod{\alpha}{\rho} 
			=\big(B_{x_0} \rho\big)^{\gamma''} \prod_{\alpha \in \Psi^{++} \setminus \Upsilon^{++}} 
			\sprod{\alpha}{\rho}.
			\qedhere
		\end{align*}
	\end{proof}
	We are now in the position to prove \eqref{eq:19}. Since the function $G$ is real-analytic, we can expand $G(\theta)$
	about $\theta = 0$ into convergent power series. We are going to estimate $D^k_\theta G(0)$ for $k \in \NN$.
	Let $(k_j : j \in \Lambda_0)$ and $k_0 \in \NN$ be such that
	\[
		k_0 + \sum_{j \in \Lambda_0} k_j = k.
	\]
	We first consider $j \in \Lambda_0$. Observe that $k_j + \abs{\mu_j} \geq 2$, otherwise, by \eqref{eq:62}, 
	$D_\theta^{k_j} g_j(0) = 0$. We claim that, 
	\begin{equation}
		\label{eq:14}
		\big
		\lvert
		D_\theta^{k_j} g_j (0)
		\big
		\rvert
		\leq
		C^{k_j + \abs{\nu_j}+\abs{\mu_j}+1} k_j!
		\brho{1/2}^{\beta_j} \btheta^{2-\abs{\beta_j}} 
		\norm{\theta}^{k_j - 2 + \abs{\beta_j}}.
	\end{equation}
	For the proof, we need to consider three cases:	
	
	\noindent 
	{case 1: $\abs{\mu_j} = 0$.} Then $\abs{\beta_j} = 0$ and $k_j \geq 2$. By \eqref{eq:67}, we get
	\begin{align*}
		\big\lvert
        D_\theta^{k_j} g_j (0)
        \big\rvert
		&\leq
		C^{k_j + \abs{\nu_j} + 1} k_j! B_{x_0}(\theta, \theta) \norm{\theta}^{k_j-2} \\
		&=
		C^{k_j + \abs{\nu_j} + 1} k_j! \btheta^2 \norm{\theta}^{k_j-2}.
	\end{align*}

	\noindent
	{case 2: $\abs{\mu_j} = 1$.} Let $\beta_j = e_p$ for $p \in J$. Since $k_j \geq 1$, by \eqref{eq:67} and
	\eqref{eq:65}, we obtain
	\begin{align*}
		\big\lvert
        D_\theta^{k_j} g_j (0)
        \big\rvert
        &\leq
        C^{k_j + \abs{\mu_j}+\abs{\nu_j} + 1} k_j! \sqrt{B_{x_0}(\theta, \theta) 
		B_{x_0}(T_\Psi \lambda_p, T_\Psi \lambda_p)}
		\norm{\theta}^{k_j-1} \\
		&\leq
		C^{k_j + \abs{\mu_j}+\abs{\nu_j} + 1} k_j! \btheta \sprod{B_{x_0}^{1/2} \rho}{\alpha_p} \norm{\theta}^{k_j-1},
	\end{align*}
	
	\noindent
	{case 3: $\abs{\mu_j} \geq 2$.} Let $\beta_j = e_p + e_q$ for $p, q \in J$. Then we conclude that 
	\begin{align*}
        \big\lvert
        D_\theta^{k_j} g_j (0)
        \big\rvert
        &\leq
        C^{k_j + \abs{\mu_j}+\abs{\nu_j} + 1} k_j! \sqrt{B_{x_0}(T_\Psi \lambda_p, T_\Psi \lambda_p) 
		B_{x_0}(T_\Psi \lambda_q, T_\Psi \lambda_q)}
        \norm{\theta}^{k_j} \\
        &\leq
        C^{k_j + \abs{\mu_j}+\abs{\nu_j} + 1} k_j! \sprod{B_{x_0}^{1/2} \rho}{\alpha_p} 
		\sprod{B_{x_0}^{1/2} \rho}{\alpha_q}
		\norm{\theta}^{k_j}.
    \end{align*}	
	Let us next consider a sequence $(\tau_j : j \in \Lambda_0^c)$ of multi-indices from $\NN^r$ such that
	\begin{equation}
		\label{eq:78}
		k_0 = \sum_{j \in \Lambda_0^c} \abs{\tau_j}.
	\end{equation}
	We may assume that
	\begin{equation}
		\label{eq:77}
		\sum_{w \in W_0(\Upsilon)} (-1)^{\ell(w)} \prod_{j \in \Lambda_0^c} 
		\big(w \cdot \theta \big)^{\tau_j} \neq 0,
	\end{equation}
	in particular, $k_0 \geq \abs{\Upsilon^{++}}$. Since the left-hand side of \eqref{eq:77} is anti-invariant under
	the action of $W_0(\Upsilon)$, it is divisible by $\bpi_\Upsilon(\theta)$. Moreover,
	\[
		\bpi_\Upsilon(\theta) = \bpi_\Upsilon\big(B_{x_0}^{-1/2} \rho\big) \bpi_\Upsilon\big(B_{x_0}^{1/2} \theta\big),
	\]
	thus we obtain
	\[
		\Big|\sum_{w \in W_0(\Upsilon)} (-1)^{\ell(w)} \prod_{j \in \Lambda_0^c}
        \big(w \cdot \theta \big)^{\tau_j}\Big|
        \leq
        C
		\bpi_\Upsilon \brho{-1/2}
        \btheta^{\abs{\Upsilon^{++}}}.
	\]
	If $j \in \Lambda_0^c$, $j \geq 1$, then by Proposition \ref{prop:1}, we have
	\begin{equation*}
		\big\lvert \der{\tau_j} g_j(0) \big\rvert
		\leq C^{\abs{\tau_j}+\abs{\nu_j}+\abs{\mu_j}+1} \tau_j! \brho{1/2}^{\beta_j},
	\end{equation*}
	which is also correct for $j = 0$ because $\abs{\beta_0} = 0$. Therefore,
	\begin{align*}
		&
		\bigg|
		\Big(
		\sum_{w \in W_0(\Upsilon)} (-1)^{\ell(w)} \prod_{j \in \Lambda_0^c}
        \big(w \cdot \theta \big)^{\tau_j}
		\Big)
		\prod_{j \in \Lambda_0^c}
		\partial^{\tau_j} g_j(0)
		\bigg|\\
		&\qquad\qquad\leq
		C
		\bpi_\Upsilon \brho{-1/2}
        \btheta^{\abs{\Upsilon^{++}}}
		\norm{\theta}^{k_0-\abs{\Upsilon^{++}}}
		\prod_{j \in \Lambda_0^c} C^{\abs{\tau_j}+\abs{\nu_j}+\abs{\mu_j}+1} \tau_j! \brho{1/2}^{\beta_j}.
	\end{align*}
	By summing up over $(\tau_j : j \in \Lambda_0^c)$ satisfying \eqref{eq:78}, we arrive at
	\begin{align*}
		&\bigg\lvert
		\sum_{w \in W_0(\Upsilon)} (-1)^{\ell(w)} D_{w \cdot \theta}^{k_0} 
		\Big(\prod_{j \in \Lambda_0^c} g_j (\theta) \Big)_{\theta = 0}
		\bigg\rvert\\
		&\qquad\qquad\leq
		C^{k_0} k_0!
		\bpi_\Upsilon \brho{-1/2}
		\btheta^{\abs{\Upsilon^{++}}}
		\norm{\theta}^{k_0 - \abs{\Upsilon^{++}}}
		\prod_{j \in \Lambda_0^c} C^{\abs{\nu_j}+\abs{\mu_j}+1}
		\brho{1/2}^{\beta_j}.
	\end{align*}
	Finally, the above estimate together with \eqref{eq:14} imply that for $k \geq K_0$ we have
	\begin{equation}
		\label{eq:17}
		\big\lvert
		D_\theta^k G(0)
		\big\rvert
		\leq 
		C^{\abs{\sigma}+k +1} k!
		\bpi_{\Upsilon} \brho{-1/2}
		\brho{1/2}^{\beta}
		\btheta^{K_0}
		\norm{\theta}^{k - K_0},
	\end{equation}
	and $D^k_\theta G(0) = 0$ if $k < K_0$. By taking $\epsilon < C^{-1}$, for $\norm{\theta} \leq \epsilon$
	and $K \geq K_0$, we conclude that 
	\begin{equation}
		\label{eq:18}
		\bigg\lvert 
		\sum_{k \geq K} \frac{D_\theta^k G(0)}{k!} 
		\bigg\rvert
		\leq C^{\abs{\sigma}+K+1} \bpi_\Upsilon \brho{-1/2} \brho{1/2}^\beta
		\big\lVert B_{x_0}^{-1} \big \rVert^{(K-K_0)/2}\\
		\btheta^{K}.
	\end{equation}
	In particular, for $\norm{\theta} \leq \epsilon$,
	\begin{equation}
		\label{eq:63}
		\abs{ G(\theta) } \leq
		C^{\abs{\sigma} + K_0  + 1} \bpi_\Upsilon \brho{-1/2} \brho{1/2}^\beta
		\btheta^{K_0}.
	\end{equation}
	We are now ready to prove \eqref{eq:19}. We first treat the case when $K_0 + \abs{\Psi^{++}} \in 2 \mathbb{Z}$. Let us
	write
	\begin{align*}
		e^{n\psi(x_0, \theta)} G(\theta) 
		&= \big(e^{n\psi(x_0, \theta)} - 1 - n\psi(x_0, \theta) \big) G(\theta)
		+ n \psi(x_0, \theta) \bigg( G(\theta) - 
		\frac{D_\theta^{K_0} G(0)}{K_0!} \bigg)\\
		&\phantom{=}
		+ \bigg( G(\theta) - \frac{D_\theta^{K_0} G(0)}{K_0!} 
		- \frac{D_\theta^{K_0+1} G(0)}{(K_0+1)!}\bigg)
		+  n \bigg(\psi(x_0, \theta) - \frac{D_\theta^3 \psi(x_0, 0)}{3!} \bigg)
		\frac{D_\theta^{K_0} G(0)}{K_0!}\\
		&\phantom{=}
		+  n \bigg(\frac{D_\theta^3 \psi(x_0, 0)}{3!} \cdot
		\frac{D_\theta^{K_0} G(0)}{K_0!}\bigg)
		+ \frac{D_\theta^{K_0+1} G(0)}{(K_0+1)!}
		+  \frac{D_\theta^{K_0} G(0)}{K_0!}, 
	\end{align*}
	and split $I_n$ into seven corresponding integrals denoted by $\calI_1, \ldots, \calI_7$, respectively. 
	
	Since for $a \in \CC$,
	\[
		\big|e^a - 1 - a \big| \leq \frac{\abs{a}^2}{2} e^{\abs{a}},
	\]
	by \eqref{eq:42} and \eqref{eq:23}, we can estimate
	\begin{align*}
		\Big|
		e^{n\psi(x_0, \theta)} - 1 - n\psi(x_0, \theta)
		\Big|
		&\leq
		\frac{1}{2}
		e^{\frac{n}4 B_x(\theta, \theta)} \big(n\psi(x_0, \theta)\big)^2 
		\\
		&\leq
		C
		e^{\frac{n}4 B_x(\theta, \theta)} 
		n^2 \big \lVert B_{x_0}^{-1} \big \rVert \cdot
		\btheta^{6}.
	\end{align*}
	In view of Proposition \ref{prop:3},
	\begin{align*}
		\bpi_\Upsilon \big(B_{x_0}^{-1/2} \rho \big) \cdot \bpi_\Psi (\theta) 
		&= 
		\bpi_\Upsilon \big(B_{x_0}^{-1/2} \rho \big) \cdot \bpi_\Psi\big(B_{x_0}^{-1/2} \rho\big) \cdot
		\bpi_\Psi\big(B_{x_0}^{1/2} \theta\big)\\
		&=
		\frac{\bpi_\Upsilon(\rho)}{\bpi_\Psi(\rho)} \big(B_{x_0}^{1/2} \rho\big)^{\gamma''}
		\bpi_\Psi\big(B_{x_0}^{-1} \rho\big) \cdot \bpi_\Psi\big(B^{1/2}_{x_0} \theta \big),
	\end{align*}
	therefore, by \eqref{eq:63} we get
	\begin{align*}
		\abs{\calI_1} 
		&\leq
		C^{\abs{\sigma} + K_0+ 1}
		\bpi_\Psi \brho{-1}
		\brho{1/2}^{\beta+\gamma''}
		n^2 \big \lVert B_{x_0}^{-1} \big \rVert
		\int_\mathfrak{a} e^{-\frac{n}{2} B_{x_0}(\theta, \theta)} 
		\btheta^{K_0+\abs{\Psi^{++}}+6}
		\dth\\
		&\leq
		C^{\abs{\sigma} + K_0 + 1}
		\big(\det B_{x_0}\big)^{-\frac{1}{2}}
		\bpi_\Psi \big( B_{x_0}^{-1} \rho \big) \brho{1/2}^{\beta+\gamma''}
		n^{-\frac{1}{2} (K_0 + \abs{\Psi^{++}}+r)}
		n^{-1} \big \lVert B_{x_0}^{-1} \big \rVert. 
	\end{align*}
	For the second integrand, we use \eqref{eq:42} and \eqref{eq:18} to estimate
	\[
		\bigg|n \psi(x_0, \theta) \bigg(G(\theta) - \frac{D_\theta^{K_0} G(0)}{K_0!}\bigg)\bigg|
		\leq
		C^{\abs{\sigma} + K_0+1} \bpi_\Upsilon\big(B_{x_0}^{-1/2} \rho\big) \big(B_{x_0}^{1/2} \rho\big)^\beta
		n \mnorm{B_{x_0}^{-1}}\cdot
		\big\lvert B_{x_0}^{1/2} \theta \big\rvert^{K_0+4},
	\]
	thus
	\begin{align*}
		\abs{\calI_2} 
		&\leq
		C^{\abs{\sigma}+K_0+1} \bpi_\Psi \brho{-1} \brho{1/2}^{\beta+\gamma''} n \big \lVert B_{x_0}^{-1} \big \rVert
		\int_\mathfrak{a} e^{-\frac{n}{2} B_{x_0}(\theta, \theta)}
        \btheta^{K_0+\abs{\Psi^{++}}+4}
        \dth \\
		&\leq
		C^{\abs{\sigma} + K_0 + 1}
		\big(\det B_{x_0}\big)^{-\frac{1}{2}}
		\bpi_\Psi \big( B_{x_0}^{-1} \rho \big) \brho{1/2}^{\beta+\gamma''}
		n^{-\frac{1}{2} (K_0 + \abs{\Psi^{++}}+r)}
		n^{-1} \big \lVert B_{x_0}^{-1} \big \rVert.
	\end{align*}
	By \eqref{eq:18}, the third integrand is bounded by
	\[
		\bigg|
		G(\theta) - \frac{D_\theta^{K_0} G(0)}{K_0!}
        - \frac{D_\theta^{K_0+1} G(0)}{(K_0+1)!}\bigg|
		\leq
		C^{\abs{\sigma}+K_0+1} \bpi_\Upsilon\big(B_{x_0}^{-1/2} \rho\big)
		\big(B_{x_0}^{1/2} \rho\big)^\beta
		\mnorm{B_{x_0}^{-1}}\cdot
		\big\lvert B_{x_0}^{1/2} \theta \big\rvert^{K_0+2},
	\]
	hence,
	\begin{align*}
		\abs{\calI_3}
		&\leq
		C^{\abs{\sigma}+K_0+1} 
		\bpi_\Psi \brho{-1} 
		\brho{1/2}^{\beta+\gamma''}
		\big \lVert B_{x_0}^{-1} \big \rVert 
        \int_\mathfrak{a} e^{-\frac{n}{2} B_{x_0}(\theta, \theta)}
        \btheta^{K_0+\abs{\Psi^{++}}+2}
        \dth \\
        &\leq
        C^{\abs{\sigma} + K_0 + 1}
        \big(\det B_{x_0}\big)^{-\frac{1}{2}}
        \bpi_\Psi \big( B_{x_0}^{-1} \rho \big) \brho{1/2}^{\beta+\gamma''}
        n^{-\frac{1}{2} (K_0 + \abs{\Psi^{++}}+r)}
        n^{-1} \big \lVert B_{x_0}^{-1} \big \rVert.
	\end{align*}
	For the fourth integrand, we observe that by \eqref{eq:67},
	\begin{align*}
		\bigg|\psi(x_0, \theta) - \frac{D_\theta^3 \psi(x_0, \theta)}{3!}\bigg|
		&=
		\frac{1}{3!} \bigg|\int_0^1 (1-t)^3 D_\theta^4 \Log \kappa(x_0+i \theta t ) {\: \rm d}t\bigg| \\
		&\leq
		C \norm{\theta}^2 B_{x_0}(\theta, \theta).
	\end{align*}
	Therefore, by \eqref{eq:17}, we obtain
	\begin{align*}
		\bigg| n \bigg(\psi(x_0, \theta) - \frac{D_\theta^3 \psi(x_0, 0)}{3!} \bigg)
                \frac{D_\theta^{K_0} G(0)}{K_0!}
		\bigg| 
		&\leq
		C^{\abs{\sigma}+K_0+1} \bpi_\Upsilon\big(B_{x_0}^{-1/2}\rho\big) \big(B_{x_0}^{1/2} \rho\big)^\beta
		n \big\lVert B_{x_0}^{-1} \big\rVert
		\cdot
		\btheta^{K_0+4},
	\end{align*}
	and the corresponding integral is again bounded by
	\[
		C^{\abs{\sigma} + K_0+1}
        \big(\det B_{x_0}\big)^{-\frac{1}{2}}
        \bpi_\Psi \big( B_{x_0}^{-1} \rho \big) \brho{1/2}^{\beta+\gamma''}
        n^{-\frac{1}{2} (K_0 + \abs{\Psi^{++}}+r)}
        n^{-1} \big \lVert B_{x_0}^{-1} \big \rVert.
	\]
	The fifth and sixth integral equal zero because the integrands are odd functions as $3 + K_0 + \abs{\Psi^{++}}$ and
	$1 + K_0 + \abs{\Psi^{++}}$ are odd integers. Finally, by \eqref{eq:17}, we have
	\begin{align*}
		&\bigg|
		\int_{\mathfrak{a}}
		e^{-\frac{n}{2} B_{x_0}(\theta, \theta)}
		\frac{D_\theta^{K_0} G(0)}{K_0!}
		\bpi_\Psi(\theta)
		\dth
		-
		\calI_7 
		\bigg|\\
		&\qquad\qquad\leq
		C^{\abs{\sigma} + K_0+1}
		\bpi_\Psi \big(B_{x_0}^{-1/2}\big) \big(B_{x_0}^{1/2} \rho\big)^{\beta+\gamma''}
		\int_{U_\epsilon^c}
		e^{-\frac{n}{2} B_{x_0}(\theta, \theta)}
		\btheta^{K_0+\abs{\Psi^{++}}}
		\dth \\
		&\qquad\qquad
		\leq 
		C^{\abs{\sigma} + K_0+1} \big(\det B_{x_0}\big)^{-\frac{1}{2}}
		\bpi_\Psi \big( B_{x_0}^{-1} \rho \big) \brho{1/2}^{\beta+\gamma''}
		n^{-\frac{1}{2} (K_0 + \abs{\Psi^{++}}+r)}
		\exp\big\{-C' n \lVert B_{x_0}^{-1} \rVert^{-1}\big\}.
	\end{align*}
	By the change of variables, we obtain
	\[
		\int_{\mathfrak{a}} e^{-\frac{n}{2} B_{x_0}(\theta, \theta)} \frac{D_\theta^{K_0} G(0)}{K_0!}
        \bpi_\Psi(\theta)
        \dth
		=
		n^{-\frac{1}{2}(K_0+\abs{\Psi^{++}}+r)} \int_{\mathfrak{a}} e^{-\frac{1}{2} B_{x_0}(u, u)}
		\frac{D_u^{K_0} G(0)}{K_0!}
        \bpi_\Psi(u)
        {\: \rm d} u,
	\]
	hence, by \eqref{eq:17}, we get
	\[
		\bigg|\int_{\mathfrak{a}} e^{-\frac{1}{2} B_{x_0}(u, u)}
        \frac{D_u^{K_0} G(0)}{K_0!}
        \bpi_\Psi(u)
        {\: \rm d} u
		\bigg|
		\leq
		C^{\abs{\sigma}+K_0+1} \big(\det B_{x_0}\big)^{-\frac{1}{2}}
        \bpi_\Psi \big( B_{x_0}^{-1} \rho \big) \brho{1/2}^{\beta+\gamma''}.
	\]
	Therefore, we conclude that
	\begin{equation}
		\label{eq:68}
		I_n = \big(\det B_{x_0}\big)^{-\frac{1}{2}}
        \bpi_\Psi \big( B_{x_0}^{-1} \rho \big) 
		\brho{1/2}^{\beta+\gamma''}
		n^{-\frac{1}{2}(K_0+\abs{\Psi^{++}}+r)} 
		A_n(x_0).
	\end{equation}
	What is left is to compare the exponents. In view of \eqref{eq:12}, $\beta + \gamma'' \preceq 2 \gamma$, and
	by Proposition \ref{prop:3}, $\abs{\gamma''} + \abs{\Upsilon^{++}} = \abs{\Psi^{++}}$. Therefore,
	\begin{align*}
		K_0 + \abs{\Psi^{++}} - 2m 
		&= \abs{\Upsilon^{++}} - \abs{\beta} + \abs{\Psi^{++}} \\
		&= 2 \abs{\Psi^{++}} - \abs{\beta} - \abs{\gamma''},
	\end{align*}
	giving $2 (\abs{\Psi^{++}} - \abs{\gamma})$ in the case when $\beta + \gamma'' = 2 \gamma$. 
	If $\beta + \gamma'' \prec 2\gamma$, then 
	\[
		\brho{1/2}^{\beta+\gamma''} 
		n^{-\frac{1}{2} (k_0 + \abs{\Psi^{++}}+r)}
		\leq \brho{}^{\gamma}
		n^{-\frac{r}{2} - \abs{\Psi^{++}} -m + \abs{\gamma}}
		\big(n^{-1} \mnorm{B_{x_0}^{-1}}\big)^{\frac{1}{2} (\abs{\beta} + \abs{\gamma''}) - \abs{\gamma}}.
	\]
	which concludes the proof of \eqref{eq:19} when $K_0 + \abs{\Psi^{++}} \in 2\mathbb{Z}$.

	If $K_0 + \abs{\Psi^{++}} \notin 2\mathbb{Z}$, we write
	\[
		e^{n\psi(x_0, \theta)} G(\theta) 
		= (e^{n\psi(x_0, \theta)} - 1)G(\theta)
		+ \bigg(G(\theta) - \frac{D_\theta^{K_0} G(0)}{K_0!}\bigg)
		+ \frac{D_\theta^{K_0} G(0)}{K_0!}.
	\]
	By a reasoning analogous to the previous case, one can show that
	\begin{equation}
		\label{eq:43}
		\abs{I_n} \leq C^{\abs{\sigma}+K_0+1} \big(\det B_{x_0}\big)^{-\frac{1}{2}} 
		\bpi_\Psi \brho{-1} \brho{1/2}^{\beta+\gamma''} n^{-\frac{1}{2} (K_0+\abs{\Psi^{++}}+r)}
		n^{-\frac{1}{2}} \big\lVert B_{x_0}^{-1} \big\rVert^{1/2}.
	\end{equation}
	Since $\abs{\beta} + \abs{\gamma''} = \abs{\Psi^{++}} + 2m - K_0 \not\in 2\mathbb{Z}$, by
	\eqref{eq:12} we get $\abs{\beta} + \abs{\gamma''} < 2 \abs{\gamma}$. Thus
	\[
		\brho{1/2}^{\beta+\gamma''} \leq \brho{}^{\gamma} \big\lVert B_{x_0}^{-1} \big\rVert^{1/2}.
	\]
	Finally, \eqref{eq:68} together with \eqref{eq:43} imply \eqref{eq:19} and the proof of Theorem \ref{thm:3} is
	completed.
\end{proof}

In the generic case, that is when $J = \emptyset$, to determine the asyptotic behaviour of $F_n$ we can use the same reasoning 
as we have applied in Theorem \ref{thm:3} to study $I_n$ for $m=0$, $\mu = \nu = 0$, and
\[
	g_0(\theta) = \frac{1}{\bfc(x_0+i\theta)},
\]
resulting in the following corollary. 
\begin{corollary}
	\label{cor:2}
	If $J = \emptyset$, then there is $C > 0$ such that
	\[
		F_n(x_0) = \big(\det B_{x_0} \big)^{-\frac{1}{2}} n^{-\frac{r}{2}} \big(a_0(x_0) + E_n(x_0)\big)
	\]
	where
	\[
		|a_0(x_0)| \leq C,
		\qquad
		|E_n(x_0)| \leq C n^{-1} \big\|B_{x_0}^{-1}\big\|.
	\]
	The constant $C$ is independent of $x_0$ and $n$.
\end{corollary}

Based on Theorem \ref{thm:3} and Corollary \ref{cor:2}, we can finish the proof the theorem. Indeed, by taking
$h_n = s_n - t_n$, we get
\begin{align}
	\label{eq:35}
	F_n(s_n) = \big(\det B_{t_n}\big)^{-\frac{1}{2}} \bpi_\Psi\big(B_{t_n}^{-1} \rho \big)
	\sum_{\gamma \in \Gamma_\Psi} \big(B_{t_n} h_n \big)^\gamma n^{-\frac{r}{2} - \abs{\Psi^{++}} + \abs{\gamma}}
	A_n^\gamma(t_n, h_n) + E_n(t_n, h_n)
\end{align}
where $A_n^\gamma(t_n, h_n) = a_\gamma(t_n) + g_\gamma(t_n, h_n) + E_n^\gamma(t_n, h_n)$, and
\begin{align*}
	&\abs{a_\gamma(t_n)} \leq C, \qquad\qquad &&\abs{g_\gamma(t_n, h_n)} \leq C \norm{h_n},\\
	&\abs{E^\gamma_n(t_n, h_n)} \leq C n^{-1} \mnorm{B^{-1}_{t_n}}, \qquad\qquad
	&&\abs{E_n(t_n, h_n)} \leq C \exp \big\{-C' n \mnorm{B_{t_n}^{-1}}^{-1} \big\}.
\end{align*}
Now, our task is to estimate $\norm{h_n}$ and $\mnorm{B_{t_n}^{-1}}$ in terms of $\delta_n$. By \eqref{eq:1} and Theorem
\ref{thm:2}, we have
\[
	B_0(u, u) \geq B_{t_n}(u, u) \geq C \dist(\delta_n, \partial \calM)^{2\eta} B_0(u, u).
\]
Hence, we get
\begin{equation}
	\label{eq:39}
	\mnorm{B_{t_n}^{-1}} = \Big(\min\big\{B_{t_n}(u, u) : \norm{u} = 1 \big\}\Big)^{-1}
	\leq C \dist(\delta_n, \partial \calM)^{-2\eta},
\end{equation}
and
\[
	r! \det B_0 \geq \det B_{t_n} \geq C \dist(\delta_n, \partial \calM)^{2r\eta}.
\]
To control $\norm{h_n}$ we estimate $\sprod{s_n}{\alpha\spcheck}$ for $\alpha \in \Psi^+$. By
$W_0$-invariance and Theorem \ref{thm:1}, we have
\[
	\sprod{s_n}{\alpha\spcheck}=-\sprod{r_\alpha s_n}{\alpha\spcheck} =
	-\big\langle s\big(\delta_n - \sprod{\delta_n}{\alpha}\alpha\spcheck\big), \alpha\spcheck\big\rangle.
\]
By the triangle inequality, for any $t \in [0, 1]$ we have
\begin{align*}
	\dist\big(\delta_n - t\sprod{\delta_n}{\alpha} \alpha\spcheck, \partial \calM\big)
	&\geq
	\dist(\delta_n, \partial \calM) - \norm{\alpha\spcheck} \sprod{\delta_n}{\alpha} \\
	&\geq
	\frac{1}{2} \dist(\delta_n, \partial \calM),
\end{align*}
provided that $n$ is large enough because, by \eqref{eq:70} and \eqref{eq:70a},
\[
	\lim_{n \to \infty} \sprod{\delta_n}{\alpha} \dist(\delta_n, \partial \calM)^{-1} = 0.
\]
Hence, by \eqref{eq:39}, we can estimate 
\begin{align*}
	2 \sprod{s_n}{\alpha\spcheck} 
	&= 
	\sprod{s_n}{\alpha\spcheck} 
	-\big\langle s\big(\delta_n - \sprod{\delta_n}{\alpha}\alpha\spcheck\big), \alpha\spcheck\big\rangle\\
	&\leq
	\sprod{\delta_n}{\alpha}
	\sup_{0 \leq t \leq 1} \Big\langle B^{-1}_{s\big(\delta_n - t \sprod{\delta_n}{\alpha}\alpha\spcheck\big)}
	\alpha\spcheck, \alpha\spcheck\Big\rangle\\
	&\leq
	C \sprod{\delta_n}{\alpha} \dist(\delta_n, \partial \calM)^{-2\eta},
\end{align*}
which gives
\begin{equation}
	\label{eq:16}
	\norm{h_n} \leq C \sum_{\alpha \in \Psi^+} \sprod{\delta_n}{\alpha} \dist(\delta_n, \partial \calM)^{-2\eta}.
\end{equation}
Lastly, since
\[
	n \dist(\delta_n, \partial \calM)^\eta 
	= n^{\frac{1}{2}} \big(n \dist(\delta_n, \partial \calM)^{2\eta} \big)^{\frac{1}{2}},
\]
we obtain
\begin{equation}
	\label{eq:28}
	\exp\big\{-C' n \dist(\delta_n, \partial \calM)^\eta \big\}
	\leq
	C'
	\big(\det B_{t_n} \big)^{-\frac{1}{2}} \bpi_\Psi\big(B_{t_n}^{-1} \rho\big) n^{-\frac{r}{2} - \abs{\Psi^{++}}-1}
	\dist(\delta_n, \partial \calM)^{-2\eta}. 
\end{equation}
The argument above allows us to control the approximation in \eqref{eq:35} in terms of $\delta_n$.

We next claim
\begin{claim}
	\label{clm:5}
	There are $R, C > 0$ such that for all $h \in \mathfrak{a}_\Psi$, $\norm{h} \leq R$ and $\alpha \in \Psi$,
	\[
		\big| D_\alpha \log \kappa(x_0+h) - B_{x_0} (\alpha, x_0 + h) \big|
		\leq C \lvert B_{x_0} (\alpha, x_0+h) \rvert \cdot \norm{h}.
	\]
\end{claim}
For the proof, let us observe that $\log \kappa$ is a real-analytic function on $\mathfrak{a}$.
By Lemma \ref{lem:2}, there is $C > 0$ such that for $j \geq 1$,
\[
	D_h^j \log \kappa(x_0) \leq C^{j+1} j! \norm{h}^j
\]
where $C > 0$ is independent of $x_0$. Hence, for $\norm{h} < C^{-1}$ and $\alpha \in \Psi$, we can write
\[
	D_\alpha \log \kappa(x_0+h) = \sum_{k \geq 0} \frac{1}{k!} D^k_h D_\alpha \log \kappa(x_0).
\]
Let us consider $k \geq 2$. Then
\begin{align*}
	D_h^k D_\alpha \log \kappa(x_0) 
	&=
	D_{r_\alpha h}^k D_{r_\alpha \alpha} \log \kappa(x_0) \\
	&=
	-\big(D_h - \sprod{h}{\alpha\spcheck} D_{\alpha}\big)^k D_\alpha \log \kappa(x_0)\\
	&=
	-D_h D_\alpha \log \kappa(x_0) - \sum_{j = 1}^k \frac{k!}{j!(k-j)!} (-1)^j \sprod{h}{\alpha\spcheck}^j 
D_h^{k-j} D_\alpha^{j+1} \log \kappa(x_0).
\end{align*}
For $j \in \{1, \ldots, k\}$, by \eqref{eq:67},
\[
	\big|D^{k-j}_h D^{j+1}_\alpha \log \kappa(x_0) \big| 
	\leq C^{k+1} (k-j)! j! B_{x_0}(\alpha, \alpha) \norm{h}^{k-j} \norm{\alpha}^j.
\]
Since 
\begin{align*}
	\sprod{B_{x_0} \alpha}{h} &= - \sprod{r_\alpha\big( B_{x_0} \alpha \big)}{h} \\
	&= -\sprod{B_{x_0} \alpha}{h} + B_{x_0}(\alpha, \alpha) \sprod{\alpha\spcheck}{h},
\end{align*}
we get
\[
	\big|D_h^k D_\alpha \log \kappa(x_0) \big|
	\leq
	C^{k+1} k! \lvert B_{x_0}(\alpha,h) \rvert \cdot \norm{h}^{k-1}.
\]
Hence,
\[
	\big\lvert D_\alpha \log \kappa(x_0+h) - D_h D_\alpha \log \kappa(x_0) \big\rvert 
	\leq C \lvert B_{x_0}(\alpha, h)\rvert \cdot \norm{h},
\]
proving the claim. 

With a help of Claim \ref{clm:5}, for all $\alpha \in \Psi$,
\[
	\big| \sprod{\delta_n}{\alpha} - \sprod{B_{t_n} h_n}{\alpha} \big|
	\leq
	C \abs{\sprod{B_{t_n} h_n}{\alpha}} \cdot \norm{h_n}.
\]
Therefore, by \eqref{eq:39}--\eqref{eq:28}, we can write
\begin{equation}
	\label{eq:72}
	\calF_n(\omega_n) = 
	\big(\det B_{t_n}\big)^{-\frac{1}{2}} \bpi_\Psi\big(B_{t_n}^{-1} \rho \big)
	n^{-\frac{r}{2}-\abs{\Psi^{++}}}
	e^{-n\phi(\delta_n)}
    \sum_{\gamma \in \Gamma_\Psi} \omega_n^\gamma A_n^\gamma(t_n, h_n)
\end{equation}
where $A_n^\gamma(t_n, h_n) = a_\gamma(t_n) + E_n^\gamma(t_n, h_n)$, and
\begin{align*}
	\abs{a_\gamma(t_n)} \leq C, \qquad\qquad 
	\abs{E_n^\gamma(t_n, h_n)} \leq C \sum_{\alpha \in \Psi^{++}} \big(\sprod{\delta_n}{\alpha} + n^{-1}\big)
	\dist(\delta_n, \partial \calM)^{-2 \eta}.
\end{align*}
Notice that
\[
	\big(\det B_{t_n}\big)^{-\frac{1}{2}} \bpi_\Psi\big(B_{t_n}^{-1} \rho \big)
	c_\Psi
	=
	\int_{\mathfrak{a}}
	e^{-\frac{1}{2}B_{t_n}(u, u)} \abs{\bpi_\Psi(u)}^2 {\: \rm d} u
\]
where
\[
	c_\Psi = \int_{\mathfrak{a}} e^{-\frac{1}{2} \norm{u}^2} \abs{\bpi_\Psi(u)}^2 {\: \rm d} u.
\]
Analogously, in the generic case, by Corollary \ref{cor:2}, we obtain
\[
	\calF_n(\omega_n) = \big(\det B_{s_n}\big)^{-\frac{1}{2}} n^{-\frac{r}{2}} e^{-n\phi(\delta_n)}
	\big(a_0(s_n) + E_n(s_n)\big),
\]
where
\[
	\abs{a_0(s_n)} \leq C, \qquad\qquad
	\abs{E_n(s_n)} \leq C n^{-1} \dist(\delta_n, \partial \calM)^{-2 \eta}.
\]

The final task is to identify the function
\begin{equation}
	\label{eq:46}
	\mathfrak{a}^\perp_\Psi \ni t \mapsto \sum_{\gamma \in \Gamma_\Psi} \eta^\gamma a_\gamma(t).
\end{equation}
To do so, we perform analysis resembling a proof of the local limit theorem towards the wall of 
$\mathfrak{a}_+$. Fix $\omega, \eta \in P^+$ such that $\sprod{\omega}{\alpha} = 0$ for all $\alpha \in \Psi$. 
There is $m$ such that
$V_\eta(O), V_\omega(O) \subseteq p(n; \cdot)$, for all $n \geq m$. By increasing $m$, we may assume that
$\delta = m^{-1} \omega$ belongs to $\calM$. Let $(\omega_n : n \geq j)$ be a
sequence of co-weights such that for $(k+1)m \leq n < (k+2)m$, $k \in \NN \cup \{0\}$, 
\[
	\omega_n = k \omega + \eta.
\]
We set $\delta_n = n^{-1} \omega_n$, $s_n = s(\delta_n)$ and $t = s(\delta)$. By \eqref{eq:72}, we have
\begin{equation}
	\label{eq:73}
	\lim_{n \to \infty} 
	n^{\frac{r}{2}+\abs{\Psi^{++}}} e^{n\phi(\delta_n)} \calF_n(\omega_n)
	=
	\big(\det B_t \big)^{-\frac{1}{2}}
	\bpi_{\Psi}\big(B_t^{-1} \rho\big) \sum_{\gamma \in \Gamma_\Psi} \eta^\gamma a_\gamma(t).
\end{equation}
The limit \eqref{eq:73} can be also computed by different method. By Claim \ref{clm:1},
\[
	\calF_n(\omega_n) =  \kappa(t)^n e^{-\sprod{t}{\omega_n}} 
	\int_U \bigg(\frac{\kappa(t+i\theta)}{\kappa(t)}\bigg)^n 
	e^{-i\sprod{\theta}{\omega_n}} \frac{{\rm d}\theta}{\bfc(t + i \theta)},
\]
thus,
\begin{equation}
	\label{eq:32}
	\calF_n(\omega_n) =  \kappa(t)^n e^{-\sprod{t}{\omega_n}} \bigg(
	\int_{U_\epsilon} e^{n\varphi(t, \theta)} e^{-i\sprod{\theta}{\omega_n - n \delta}} 
	\frac{{\rm d} \theta}{\bfc(t+i\theta)}+E_n(\delta_n)\bigg)
\end{equation}
where
\[
	\abs{E_n(\delta_n)} \leq C e^{-C' n}.
\]
We first show that
\begin{equation}
	\label{eq:75}
	\lim_{n \to \infty} n \big(\phi(\delta_n) - \log \kappa(t) + \sprod{t}{\delta_n}\big) = 0.
\end{equation}
By writing Taylor's polynomial for $\log \kappa$ centered at $t$, we get
\[
	\big| \log \kappa(s_n) - \log \kappa(t) - \sprod{s_n - t}{\nabla \log \kappa(t)} \big|
	\leq C \norm{s_n - t}^2.
\]
Since $\delta = \nabla \log \kappa(t)$, we have
\[
	\big| \sprod{s_n - t}{\nabla \log \kappa(t)} - \sprod{s_n - t}{\delta_n} \big|
	\leq
	\norm{s_n - t} \cdot \norm{\delta_n - \delta}.
\]
Hence, 
\begin{align*}
	\big| \phi(\delta_n) - \log \kappa(t) + \sprod{t}{\delta_n} \big| &=
	\big| \log \kappa(s_n) - \log \kappa(t) - \sprod{s_n - t}{\delta_n} \big| \\
	&\leq
	C \norm{s_n - t}^2 + \norm{s_n-t} \cdot \norm{\delta_n -\delta} \\
	&\leq
	C' \norm{\delta_n - \delta}^2,
\end{align*}
which proves \eqref{eq:75}, because $n \norm{\delta_n - \delta} \leq \norm{\eta} + 2 \norm{\omega}$.

We next deal with the integral over $U_\epsilon$. 
\begin{claim}
	\label{clm:6}
	\[
		\lim_{n \to \infty} n^{\frac{r}{2}+\abs{\Psi^{++}}} 
		\int_{U_\epsilon} 
		e^{n\varphi(t, \theta)} e^{-i\sprod{\theta}{\omega_n-n\delta}} 
		\frac{{\rm d}\theta}{\bfc(t+i\theta)}
		=
		(2\pi)^r
		\calQ_\Psi(t)  
		\frac{G_0(\eta)}{\abs{\bfb_\Psi(0)}^2}
	\]
	where
	\[
		G_0(\eta) = \lim_{\theta \to 0}
		\frac{1}{\abs{W_0(\Psi)}}
		\sum_{w \in W_0(\Psi)}
		e^{-i\sprod{w \cdot \theta}{\eta}} \bfc_\Psi(-i w \cdot \theta).
	\]
\end{claim}
For the proof, we consider a sequence of functions on $U_\epsilon$ defined by
\[
	f_n(\theta) = e^{n\psi(t, \theta)}
	e^{-i\sprod{\theta}{k \omega + (I-T_\Psi)\eta- n \delta}}
	\frac{\bfc_\Psi(i\theta)}{\bfc(t + i \theta)}
	\cdot
	\frac{1}{\big|\bpi_\Psi(\theta) \bfc_\Psi(i \theta) \big|^2}.
\]
Note that a simple reflection $r_j$ for $j \in J$, sends $\alpha_j$ to $-\alpha_j$ and permutes elements in 
$\Phi^+\setminus \Psi^+$. Consequently, $f_n$ is $W_0(\Psi)$-invariant. Since
\[
	\frac{1}{ \big|\bpi_\Psi(\theta) \bfc_\Psi(i \theta) \big|}
	= \prod_{\alpha \in \Psi^{++}} 
	\bigg| \frac{1 - e^{-i\sprod{\theta}{\alpha\spcheck}}}{\sprod{\theta}{\alpha\spcheck}}\bigg| 
	\cdot
	\Big|1-\tau_{2\alpha}^{-1} \tau_\alpha^{-1/2} e^{-\frac{i}{2}\sprod{\theta}{\alpha\spcheck}}\Big|^{-1}
	\cdot 
	\Big|1 + \tau_{\alpha}^{-1/2} e^{-\frac{i}{2} \sprod{\theta}{\alpha\spcheck}}\Big|^{-1}
	\leq C,
\]
and
\[
	\bigg|\frac{\bfc_\Psi(i\theta)}{\bfc(t+i\theta)} \bigg|
	=
	\prod_{\alpha \in \Phi^+\setminus\Psi^+}
	\left|\frac{1 -\tau_{\alpha/2}^{-1/2} e^{-\sprod{t+i\theta}{\alpha\spcheck}}}
	{1 - \tau_{\alpha}^{-1} \tau_{\alpha/2}^{-1/2} e^{-\sprod{t+i\theta}{\alpha\spcheck}}}
	\right|
	\leq
	C,
\]
by \eqref{eq:23}, for $u \in U_{\sqrt{n} \epsilon}$ we get
\[
	\abs{f_n(n^{-1/2} u)} \leq e^{\frac{1}{4} B_t(u, u)}.
\]
Moreover, we have
\[
	\big|\big\langle n^{-1/2} u, k \omega + (I-T_\Psi)\eta - n \delta \big\rangle \big| \leq n^{-1/2} \norm{u} 
	\big(\norm{\eta} + 2 \norm{\omega}\big),
\]
thus, by \eqref{eq:42}, we obtain
\[
	\lim_{n \to \infty} f_n\big(n^{-1/2} u\big) =
	\frac{1}{\abs{b_\Psi(0)}^2}
	\cdot
	\prod_{\alpha \in \Phi^+\setminus\Psi^+} 
	\frac{1 - \tau_{\alpha/2}^{-1/2} e^{-\sprod{t}{\alpha\spcheck}}} 
	{1 - \tau_{\alpha}^{-1} \tau_{\alpha/2}^{-1/2} e^{-\sprod{t}{\alpha\spcheck}}}.
\]
We now use $W_0(\Psi)$-invariance of $f_n$ to write
\[
	\int_{U_\epsilon} e^{n \varphi(t, \theta)} e^{-i\sprod{\theta}{\omega_n - n \delta}} 
	\frac{{\rm d}\theta}{\bfc(t + i \theta)}
	=
	\int_{U_\epsilon} e^{-\frac{n}{2} B_t(\theta, \theta)} f_n(\theta) g(\theta)
	\abs{\bpi_\Psi(\theta)}^2 {\: \rm d}\theta
\]
where
\[
	g(\theta) = \frac{1}{W_0(\Psi)} \sum_{w \in W_0(\Psi)}
	e^{-i\sprod{w \cdot \theta}{T_\Psi \eta}} \bfc_\Psi(-i w \cdot \theta).
\]
Because the function
\[
	\theta \mapsto
	\sum_{w \in W_0(\Psi)} (-1)^{\ell(w)} e^{-i\sprod{w \cdot \theta}{T_\Psi \eta+\rho_\Psi}} \bfb_\Psi(-iw\cdot \theta)
\]
is an anti-invariant exponential polynomial, it is divisible by the Weyl denominator
\[
	\Delta_\Psi(i\theta) = 
	\prod_{\alpha \in \Psi^{++}} 
	\Big(e^{i\sprod{\theta}{\alpha\spcheck}/2} - e^{-i\sprod{\theta}{\alpha\spcheck}/2}\Big).
\]
Hence, for $\theta \in U_\epsilon$,
\[
	\abs{g(\theta)}
	=
	\bigg|
	\frac{1}{\Delta_\Psi(i\theta)}
	\sum_{w \in W_0(\Psi)} (-1)^{\ell(w)} 
	e^{-i\sprod{w \cdot \theta}{T_\Psi \eta + \rho_\Psi}} \bfb_\Psi(-i w \cdot \theta)
	\bigg| \leq C.
\]
Finally, using the dominated convergence we can evaluate the limit
\begin{align*}
	&\lim_{n \to \infty} n^{\frac{r}{2} + \abs{\Psi^{++}}} 
	\int_{U_\epsilon} e^{n \varphi(t, \theta)} e^{-i\sprod{\theta}{\omega_n - n \delta_n}}
	\frac{{\rm d}\theta}{\bfc(t + i \theta)} \\
	&\qquad\qquad=
	\lim_{n \to \infty}
	\int_{U_{\sqrt{n} \epsilon}} e^{-\frac{1}{2} B_t(u, u)}
	f_n\big(n^{-1/2} u\big) g\big(n^{-1/2} u\big) \abs{\bpi_\Psi(u)}^2 {\: \rm d} u \\
	&\qquad\qquad=
	(2\pi)^r
	\calQ_\Psi(t) 
	\frac{G_0(\eta)}{\abs{\bfb_\Psi(0)}^2},
\end{align*}
proving the claim.

We now apply Claim \ref{clm:6} together with \eqref{eq:75} to the formula \eqref{eq:32} to get
\begin{equation}
	\label{eq:30}
	\lim_{n \to \infty} n^{\frac{r}{2}+\abs{\Psi^{++}}} e^{n\phi(\delta_n)} \calF_n(\omega_n) =
	(2\pi)^r
	\calQ_\Psi(t)
	\frac{G_0(\eta)}{\abs{\bfb_\Psi(0)}^2}.
\end{equation}
In view of Theorem \ref{thm:3}, the function \eqref{eq:46} is continuous, thus comparison \eqref{eq:30} with \eqref{eq:73}
gives
\[
	\sum_{\gamma \in \Gamma_\Psi} \eta^\gamma a_\gamma(t) 
	=
	c_\Psi
	\bigg(
	\prod_{\alpha \in \Phi^+\setminus \Psi^+}
    \frac{1 - e^{-\sprod{t}{\alpha\spcheck}}}{1 - q_\alpha^{-1} e^{-\sprod{t}{\alpha\spcheck}}}
    \bigg)
    \frac{G_0(\eta)}{\abs{\bfb_\Psi(0)}^2}.
\]
Our final step is to show how to control the error term in \eqref{eq:72}. Let us observe that for all $x, y, u \in
\mathfrak{a}$,
\begin{align*}
	\big|D_u \log \kappa(x) - D_u \log \kappa(y) \big| 
	&\leq
	\sup_{0 \leq t \leq 1} \abs{B_{x + t(y-x)}(u, x-y)} \\
	&\leq
	\sqrt{B_0(u, u) B_0(x-y, x-y)} \\
	&\leq
	C \norm{u} \cdot \norm{x - y}.
\end{align*}
Since for $\alpha \in \Phi^+$,
\[
	D_\alpha \log \kappa(s_n)
	=
	-D_{r_\alpha \alpha} \log \kappa(s_n)
	=-D_\alpha \log \kappa\big(s_n - \sprod{s_n}{\alpha\spcheck}\alpha\big),
\]
we obtain
\begin{align}
	\nonumber
	2 \sprod{\delta_n}{\alpha} &= D_\alpha \log \kappa(s_n) - 
	D_\alpha \log \kappa\big(s_n - \sprod{s_n}{\alpha\spcheck}\alpha\big) \\
	\label{eq:44}
	&\leq 
	C \sprod{s_n}{\alpha\spcheck}.
\end{align}
Therefore, \eqref{eq:70b} and \eqref{eq:16} imply that
\[
	\sprod{t_n}{\alpha\spcheck} \geq \sprod{s_n}{\alpha\spcheck} - \norm{h_n}
	\geq C^{-1} \xi, \qquad\text{for all}\qquad \alpha \in \Phi^+ \setminus \Psi^+.
\]
In particular, there is $C > 0$ such that
\[
    \prod_{\alpha \in \Phi^+\setminus \Psi^+}
    \frac{1 - \tau_{\alpha/2}^{-1/2} e^{-\sprod{t_n}{\alpha\spcheck}}}
	{1 - \tau_\alpha^{-1} \tau_{\alpha/2}^{-1/2} e^{-\sprod{t_n}{\alpha\spcheck}}}
    \geq C,
\]
and since (see \cite{a1, ascht})
\[ 
	C^{-1} \bpi_\Psi(\omega_n+\rho) \leq G_0(\omega_n),
\]
given $\gamma \in \Gamma_\Psi$, we can estimate
\[
	\omega_n^\gamma 
	\leq \bpi_\Psi(\omega_n + \rho) 
	\leq C \left(
    \prod_{\alpha \in \Phi^+\setminus \Psi^+}
	\frac{1 - \tau_{\alpha/2}^{-1/2} e^{-\sprod{t_n}{\alpha\spcheck}}}
	{1 - \tau_{\alpha}^{-1}\tau_{\alpha/2}^{-1/2} e^{-\sprod{t_n}{\alpha\spcheck}}}
    \right)
    \frac{G_0(\omega_n)}{\abs{\bfb_\Psi(0)}^2}.
\]
Hence,
\[
	\calF_n(\omega_n) = (2\pi)^r n^{-\frac{r}{2}-\abs{\Psi^{++}}} e^{-n \phi(\delta_n)} 
	\calQ_{\Psi}(t_n) \frac{G_0(\omega_n)}{\abs{\bfb_\Psi(0)}^2}
	\big(1 + E_n(\delta_n)\big),
\]
which completes the proof of Theorem \ref{thm:4}.
\end{proof}

The asymptotic in Theorem \ref{thm:4} is uniform on a large region with respect to $n$ and $v$, but it depends on the
implicit function $\delta \mapsto s(\delta)$. In most applications, one needs the asymptotic of the heat kernel in
the region where $\omega_n = o(n)$ accompanied by global upper estimates. For this reason we state the following corollary
which is a direct consequence of Theorem \ref{thm:4}.
\begin{corollary}
	\label{cor:1}
	Let $(\omega_n : n \in \NN)$ be a sequence of co-weights such that $V_{\omega_n}(O) \subseteq \supp p(n; \,\cdot\,)$.
	We assume that $\delta_n = n^{-1} \omega_n$ satisfies
	\[
		\lim_{n \to \infty} \sprod{\delta_n}{\alpha} = 0, \qquad\text{for all}\qquad \alpha \in \Phi.
	\]
	Then for any sequence of good vertices $(v_n : n \in \NN)$ such that $v_n \in V_{\omega_n}(O)$,
	\begin{equation}
		\label{eq:40}
		p(n; v_n) = n^{-\frac{r}{2}-\abs{\Phi^{++}}} P_{\omega_n}(0) \varrho^n e^{-n\phi(\delta_n)}
		\Big(C_0 + \calO\big(\norm{\delta_n}\big) + \calO\big(n^{-1}\big) \Big)
	\end{equation}
	where
	\[
		C_0 = W_0\big(q^{-1}\big) \frac{1}{\abs{\bfb_\Phi(0)}^2}
		\bigg(\frac{1}{2\pi}\bigg)^r \int_{\mathfrak{a}} e^{-\frac{1}{2} B_0(u, u)} \abs{\bpi_\Phi(u)}^2 {\: \rm d}u.
	\]
	The implied constants in \eqref{eq:40} are absolute.
\end{corollary}

\begin{remark}
	It is not possible to replace $\phi(\delta_n)$ by $\frac{1}{2} B_0^{-1}(\delta_n, \delta_n)$ without introducing an
	error term of a very different nature. Indeed, by \eqref{eq:29}, 
	\[
		\exp\big\{-n\phi(\delta_n)\big\}
		= 
		\exp\Big\{-\tfrac{n}{2}B_0^{-1}(\delta_n, \delta_n)\Big\} 
		\exp\big\{\calO\big(n \norm{\delta_n}^3\big)\big\}.
	\]
	If $\delta_n$ approaches $\partial \calM$ then $n \norm{\delta_n}^3$ cannot be small. Note that, the third power may
	be replaced by higher degree whenever the random walk has vanishing moments.
\end{remark}

\begin{remark}
	\label{rem:1}
	It is relatively easy to obtain a global upper bounds on $p(n; v)$, namely, by Claim \ref{clm:1}, for any
	$u \in \mathfrak{b}$ and $v \in V_\omega(O)$, we have
	\[
		p(n; v) = \bigg(\frac{1}{2\pi} \bigg)^r \chi_0(\omega)^{-\frac{1}{2}}
		\varrho^n \kappa(u)^n e^{-\sprod{u}{\omega}}
		\int_U \bigg(\frac{\kappa(u+i\theta)}{\kappa(u)}\bigg)^n e^{-i\sprod{\theta}{\omega}} 
		\frac{{\rm d}\theta}{\bfc(u+i\theta)}.
	\]
	Thus, 
	\begin{align}
		\nonumber
		p(n; v)
		&\leq
		C \chi_0(\omega)^{-\frac{1}{2}}
		\varrho^n \Big(\min\big\{\kappa(u) e^{-\sprod{u}{\delta}} : u \in \mathfrak{b} \big\}\Big)^n \\
		\label{eq:15}
		&=C \chi_0(\omega)^{-\frac{1}{2}}
		\varrho^n e^{-n \phi(\delta)}.
	\end{align}
\end{remark}

\subsection{Green functions}
\label{subsec:4.6}
In this section we prove the asymptotic formula for Green function of the random walk with the transition
probability $p$. Let us recall that Green function $G_\zeta$ is defined for $\zeta \in (0, \varrho^{-1}]$,
and $x, y \in V_P$ by the formula
\[
	G_\zeta(x, y) = \sum_{n \geq 0} \zeta^n p(n; x, y).
\]
We set $G_\zeta(x) = G_\zeta(O, x)$. 

We first treat the case $\zeta \in (0, \varrho^{-1})$. Let
\[
	\mathcal{C} = \big\{x \in \mathfrak{a} : \kappa(x) = (\zeta \varrho)^{-1} \big\}.
\]
For $u \in S^{r-1}$, the unit sphere in $\mathfrak{a}$ centered at the origin, there is the unique point
$s_u \in \mathcal{C}$ such that
\[
	\nabla \kappa(s_u) = \norm{ \nabla \kappa(s_u) } u.
\]
We have
\begin{theorem}
	\label{thm:6}
	Let $\Psi \subsetneq \Phi$. Suppose that $u = \norm{\omega}^{-1} \omega$ for $\omega \in P^+$
	satisfies
	\begin{subequations}
	\begin{equation}
		\label{eq:71a}
		\lim_{\norm{\omega} \to \infty} \sprod{u}{\alpha} = 0, \quad\text{for all}\quad \alpha \in \Psi,
	\end{equation}
	\begin{equation}
		\label{eq:71b}
		\sprod{u}{\alpha} \geq \xi, \quad\text{for all}\quad \alpha \in \Phi^+ \setminus \Psi^+,
	\end{equation}
	\end{subequations}
	for some $\xi > 0$. Then for all $x \in V_\omega(O)$,
	\begin{equation}
		\label{eq:76}
		G_\zeta(x) = 
		\norm{\omega}^{-\frac{r-1}{2}-\abs{\Psi^{++}}}
		\calP_\Psi(\omega) \calR_\Psi(u) e^{-\sprod{s_u}{\omega}}
		\big(1 + o(1)\big),
	\end{equation}
	as $\norm{\omega}$ tends to infinity where
	\[
		\calR_\Psi(u) = \sqrt{2 \pi} \norm{\nabla \log \kappa(s_u)}^{\frac{r-3}{2} +\abs{\Psi^{++}}} 
		\big(B_{s_u}^{-1}(u, u)\big)^{-\frac{1}{2}} \calQ_\Psi(s_u).
	\]
\end{theorem}
\begin{proof}
	Fix $u \in S^{r-1}$ and let $t_0 = \min \{t > 0: t^{-1} u \in \calM\}$. For $t > t_0$ we have $t^{-1} u \in \calM$,
	thus we may define $s_t = s\big(t^{-1} u\big)$. Consider a function on $(t_0, \infty)$ given by the formula
	\[
		\psi_u(t) = t \big(\log (\zeta \varrho) - \phi(t^{-1} u)\big).
	\]
	A simple calculation leads to
	\[
		\psi_u'(t) 
		= \log(\zeta \varrho) + \log \kappa(s_t), \qquad\text{and}\qquad
		\psi_u''(t) = -\frac{1}{t^3} B_{s_t}^{-1} (u,u).
	\]
	Hence, $\psi_u$ is concave in $(t_0, \infty)$. Since
	\[
		\lim_{t \to t_0} \kappa(s_t) = +\infty, \qquad\text{and}\qquad \lim_{t \to +\infty} \kappa(s_t) = 1,
	\]
	there is the unique maximum attained at $t_u > t_0$ satisfying
	\[
		0 = \psi_u'(t_u) = \log(\zeta \varrho) + \log \kappa(s_{t_u}).
	\]
	Because $\nabla \log \kappa(s_{t_u}) = t_u^{-1} u$, we conclude that $s_u = s_{t_u}$ and
	\begin{equation}
		\label{eq:27}
		\norm{\omega} \cdot \psi_u(t_u) = -\sprod{s_u}{\omega}.
	\end{equation}
	By compactness of $S^{r-1}$, there is $\delta > 0$ such that for all $u \in S^{r-1}$,
	\[
		\dist\big(t_u^{-1} u, \partial \calM\big) \geq 2 \delta.
	\]
	Hence, for all $t \in I_\delta$ where
	\[
		I_\delta = \Big\{t \in \RR : \big|t^{-1} - t_u^{-1} \big| \leq \delta \Big\},
	\]
	we have
	\[
		\dist\big(t^{-1} u, \partial \calM\big) \geq \delta,
	\]
	which entails that the mapping $I_\delta \ni t \mapsto s_t$ and all its derivatives are bounded independent of
	$u \in S^{r-1}$. Therefore, there is $C > 0$ such that for all $t \in I_\delta$ and $u \in S^{r-1}$,
 	\begin{equation}
		\label{eq:37}
		\bigg| \psi_u(t) - \psi_u(t_u) + \frac{1}{2 t_u^3} B_{s_u}^{-1}(u, u) \abs{t - t_u}^2 \bigg|
		\leq
		C \abs{t-t_u}^3.
	\end{equation}
	Moreover, $\psi_u$ is concave thus there is $c > 0$ such that for all $t > t_0$ and $u \in S^{r-1}$,
	\begin{align}
		\label{eq:74}
		\psi_u(t) - \psi_u(t_u) &\leq -\frac{1}{2 t_u^3}B_{s_u}^{-1}(u, u) \abs{t - t_u}^2 \\
		&\leq
		\label{eq:26}
		-2 c \abs{t -t_u}^2.
	\end{align}
	By a straightforward computation one can check that the function $\calQ_\Psi$ and all its derivatives are bounded on
	compact sets. Therefore, for all $t \in I_\delta$ and $u \in S^{r-1}$ satisfying \eqref{eq:71b} we can estimate
	\begin{align}
		\nonumber
		\Big|\calQ_\Psi\big((I-T_\Psi) s_t\big) - \calQ_\Psi(s_u) \Big|
		&\leq
		C\big( \norm{s_t - s_u} + \norm{T_\Psi s_u}\big) \\
		\label{eq:25}
		&\leq
		C\Big(\abs{t^{-1} - t_u^{-1}} + \sum_{\alpha \in \Psi^+} \sprod{u}{\alpha}\Big)
	\end{align}
	where in the last inequality we have used \eqref{eq:16}. Finally, by \eqref{eq:44}, there is $C > 0$ such that
	for all $u \in S^{r-1}$ satisfying \eqref{eq:71b},
	\[
		C^{-1} \leq \calQ_\Psi(s_u) \leq C.
	\]
	We are now ready to deal with Green function $G_\zeta(x)$. We write
	\[
		G_\zeta(x) = \sum_{n \in A} \zeta^n p(n; x) + \sum_{n \in B} \zeta^n p(n; x)
	\]
	where
	\[
		B=\Big\{n \in \mathbb{N}: \big|n - \norm{\omega} t_u \big| \geq \norm{\omega}^{\frac{3}{5}}, 
		\text{ and } n \geq \norm{\omega} t_0 \Big\},
	\]
	and
	\[
		A = \Big\{n \in \mathbb{N}: \big|n - t_u \norm{\omega} \big| < \norm{\omega}^\frac{3}{5} \Big\}.
	\]
	We can assume that
	\[
		\norm{\omega} \geq 32 \max\big\{t_u^5 : u \in S^{r-1}\big\} + \delta^{-\frac{5}{2}}.
	\]
	Let us first estimate the sum over $B$. Since (see \cite{a1, ascht})
	\[
		C \bigg(\prod_{\alpha \in \Phi^+} q_\alpha^{-\frac{1}{2}\sprod{\alpha}{\omega}} \bigg) 
		\leq \calP_\Psi(\omega),
	\]
	by \eqref{eq:15}, \eqref{eq:27} and \eqref{eq:26}, we get
	\begin{align*}
		\sum_{n \in B} \zeta^n p(n; x) 
		&\leq 
		C \calP_\Psi(\omega) \sum_{n \in B} e^{\norm{\omega} \cdot \psi_u(\norm{\omega}^{-1} n)} \\
		&\leq
		C \calP_\Psi(\omega) e^{-\sprod{s_u}{\omega}} 
		e^{-c \norm{\omega}^\frac{1}{5}} 
		\sum_{n \in \ZZ}
		\exp\left\{-c\norm{\omega} \cdot \big|\norm{\omega}^{-1} n - t_u \big|^2 \right\}. 
	\end{align*}
	In the light of
	\[
		\sum_{n \in \ZZ} 
        \exp\Big\{-c\norm{\omega} \cdot \big|\norm{\omega}^{-1} n - t_u \big|^2 \Big\}
		\leq
		2 \norm{\omega} \int_\RR \exp\big\{-c \norm{\omega} u^2 \big\} {\: \rm d} u
		\leq
		C \norm{\omega}^{\frac{1}{2}},
	\]
	we obtain
	\[
		\sum_{n \in B} \zeta^n p(n; x) \leq C \calP_\Psi(\omega) 
		e^{-\sprod{s_u}{\omega}} e^{-c \norm{\omega}^\frac{1}{5}}
		\norm{\omega}^{\frac{1}{2}}.
	\]
	To deal with the sum over $A$, we notice that for $n \in A$,
	\[
		\dist\big(n^{-1} \omega, \partial \calM\big) \geq \delta,
	\]
	which justifies the application of Theorem \ref{thm:4}. Hence,
	\[
		\sum_{n \in A} \zeta^n p(n; x) = \calP_{\Psi}(\omega) \sum_{n \in A} n^{-\frac{r}{2} - \abs{\Psi^{++}}}
		\calQ_\Psi\Big((I-T_\Psi) s\big(n^{-1} \omega\big) \Big)
		e^{\norm{\omega} \cdot \psi_u(\norm{\omega}^{-1} n)} 
		\Big(1 + E_n\big(n^{-1} \omega\big)\Big)
	\]
	where
	\[
		\big| E_n\big(n^{-1} \omega \big) \big| \leq C \sum_{\alpha \in \Psi^+} \frac{1}{n} \sprod{\omega+\rho}{\alpha}.
	\]
	Let us consider the following sum
	\[
		S(\omega) = \sum_{n \in A} n^{-\frac{r}{2} - \abs{\Psi^{++}}}
        e^{\norm{\omega} \cdot \psi_u(\norm{\omega}^{-1} n)}.
	\]
	For $n \in A$, we have
	\[
		\bigg| \frac{\omega}{n} - \frac{u}{t_u} \bigg|
		=
		\bigg| \frac{\norm{\omega}}{n} - \frac{1}{t_u} \bigg| \leq \norm{\omega}^{-\frac{2}{5}},
	\]
	thus, by \eqref{eq:25},
	\begin{align*}
		&
		\Big|
		\sum_{n\in A}
		n^{-\frac{r}{2} - \abs{\Psi^{++}}}
        \calQ_\Psi\Big((I-T_\Psi) s\big(n^{-1} \omega \big)\Big)
        e^{\norm{\omega} \cdot \psi_u(\norm{\omega}^{-1} n)}
		-
		\calQ_\Psi(s_u) S(\omega) \Big| \\
		&\qquad\qquad\qquad
		\leq
		C\Big(\norm{\omega}^{-\frac{2}{5}} + \sum_{\alpha \in \Psi^+} \sprod{u}{\alpha}\Big) S(\omega).
	\end{align*}
	Furthermore, we have
	\[
		\Big|
		\sum_{n\in A}
        n^{-\frac{r}{2} - \abs{\Psi^{++}}}
        \calQ_\Psi\Big((I-T_\Psi) s\big(n^{-1} \omega\big)\Big)
        e^{\norm{\omega} \cdot \psi_u(\norm{\omega}^{-1} n)}
		E_n\big(n^{-1} \omega\big)
		\Big|
		\leq
		C
		\Big(\norm{\omega}^{-1} + \sum_{\alpha \in \Psi^+} \sprod{u}{\alpha}\Big) S(\omega),
	\]
	because for $n \in A$,
	\[
        \big| E_n\big(n^{-1} \omega \big) \big| \leq
        C \Big(\norm{\omega}^{-1} + \sum_{\alpha \in \Psi^+} \sprod{u}{\alpha}\Big).
    \]
	Consequently, the problem reduces to establishing the asymptotic behavior of $S(\omega)$. To do so, let us introduce
	\[
		S_0(\omega) =
		\sum_{n \in A} 
		\exp\Big\{\tfrac{1}{2} \norm{\omega} \cdot \psi_u''(t_u) \cdot \big|t_u - \norm{\omega}^{-1} n \big|^2\Big\}.
	\]
	By \eqref{eq:37} and \eqref{eq:74}, we have
	\begin{align*}
		&
		\bigg|
		\exp\Big\{\norm{\omega} \cdot \psi_u\big(\norm{\omega}^{-1} n \big) \Big\}
		-
		\exp\Big\{\norm{\omega} 
		\Big(\psi_u(t_u) + \tfrac{1}{2} \psi_u''(t_u) \cdot \big|\norm{\omega}^{-1} n - t_u\big|^2\Big)\Big\}
		\bigg|\\
		&\qquad\qquad\leq
		C
		\norm{\omega}^{-\frac{1}{5}}
		\exp\Big\{\norm{\omega}
        \Big(\psi_u(t_u) + \tfrac{1}{2} \psi_u''(t_u) \cdot \big|\norm{\omega}^{-1} n - t_u\big|^2\Big)\Big\}.
	\end{align*}
	Furthermore, by the mean value theorem, we can estimate
	\[
		\Big|n^{-\frac{r}{2} - \abs{\Psi^{++}}} - \big(t_u \norm{\omega}\big)^{-\frac{r}{2}-\abs{\Psi^{++}}} \Big|
		\leq
		C \big(t_u \norm{\omega}\big)^{-\frac{r}{2}-\abs{\Psi^{++}}} \norm{\omega}^{-\frac{2}{5}},
	\]
	because $\norm{\omega}^{-\frac{1}{5}} \leq \frac{1}{2} t_u$. Hence,
	\[
		\Big|
		S(\omega) - e^{-\sprod{s_u}{\omega}} \big(t_u \norm{\omega}\big)^{-\frac{r}{2} -\abs{\Psi^{++}}} S_0(\omega)
		\Big|
		\leq
		C \norm{\omega}^{-\frac{2}{5}}  
		e^{-\sprod{s_u}{\omega}} \big(t_u \norm{\omega}\big)^{-\frac{r}{2} -\abs{\Psi^{++}}} S_0(\omega).
	\]
	Lastly, we replace the sum $S_0(\omega)$ by the corresponding integral, that is
	\[
		I(\omega) = \int_\RR
		\exp\Big\{\tfrac{1}{2} \norm{\omega} \cdot \psi_u''(t_u) \cdot \big|t_u - \norm{\omega}^{-1} t \big|^2\Big\}
        {\:\rm d}t
		=
		\sqrt{2\pi} \norm{\omega}^{\frac{1}{2}} \big(-\psi_u''(t_u)\big)^{-\frac{1}{2}}.
	\]
	For $n \in A$ such that $n < \norm{\omega} t_u$ and $n \leq t \leq \min\{n+1, \norm{\omega} t_u\}$, we have
	\begin{align*}
		&\Big|
		\exp\Big\{\tfrac{1}{2} \norm{\omega} \cdot \psi_u''(t_u) \cdot \big|t_u - \norm{\omega}^{-1} n \big|^2\Big\}
		-
		\exp\Big\{\tfrac{1}{2} \norm{\omega} \cdot \psi_u''(t_u) \cdot \big|t_u - \norm{\omega}^{-1} t \big|^2\Big\}
		\Big| \\
		&\qquad\qquad\leq
		C \norm{\omega}^{-\frac{2}{5}}
		\exp\Big\{\tfrac{1}{2} \norm{\omega} \cdot \psi_u''(t_u) \cdot \big|t_u - \norm{\omega}^{-1} t \big|^2\Big\}.
	\end{align*}
	Analogously, for $n \in A$ such that $n \geq \norm{\omega} t_u$ and $n \leq t \leq n+1$,
	\begin{align*}
        &\Big|
        \exp\Big\{\tfrac{1}{2} \norm{\omega} \cdot \psi_u''(t_u) \cdot \big|t_u - \norm{\omega}^{-1} n \big|^2\Big\}
        -
        \exp\Big\{\tfrac{1}{2} \norm{\omega} \cdot \psi_u''(t_u) \cdot \big|t_u - \norm{\omega}^{-1} t \big|^2\Big\}
        \Big| \\
        &\qquad\qquad\leq
        C \norm{\omega}^{-\frac{2}{5}}
        \exp\Big\{\tfrac{1}{2} \norm{\omega} \cdot \psi_u''(t_u) \cdot \big|t_u - \norm{\omega}^{-1} n \big|^2\Big\}.
    \end{align*}
	Hence, we deduce that 
	\[
		\big|S_0(\omega) - I(\omega) \big|
		\leq
		C
		\norm{\omega}^{-\frac{2}{5}} I(\omega).
	\]
	By putting these estimates together, we obtain
	\[
		S(\omega) = \sqrt{2 \pi} \big(t_u \norm{\omega}\big)^{-\frac{r-1}{2}-\abs{\Psi^{++}}} 
		\big(t_u^{-2} B_{s_u}^{-1}(u, u)\big)^{-\frac{1}{2}} e^{-\sprod{s_u}{\omega}}
		\big(1+o(1)\big),
	\]
	which entails \eqref{eq:76} because $t_u = \norm{\nabla \log \kappa(s_u)}^{-1}$.
\end{proof}

We now turn to the case $\zeta = \varrho^{-1}$.
\begin{theorem}
	\label{thm:7}
	For all $x \in V_\omega(O)$,
	\[
		G_{\varrho^{-1}}(x) = 
		P_{\omega}(0) \big(B_0^{-1}(\omega, \omega)\big)^{-\frac{r}{2}-\abs{\Phi^{++}}+1}
		\big(D_0 + o(1) \big),
	\]
	as $\norm{\omega}$ tends to infinity where
	\[
		D_0 =
		2^{\frac{r}{2}+\abs{\Phi^{++}}-1}
		\Gamma\big(\tfrac{r}{2}+\abs{\Phi^{++}} - 1\big)
		W_0\big(q^{-1}\big) \frac{1}{\abs{\bfb_\Phi(0)}^2}
        \bigg(\frac{1}{2\pi}\bigg)^r \int_{\mathfrak{a}} e^{-\frac{1}{2} B_0(u, u)} \abs{\bpi_\Phi(u)}^2 {\: \rm d}u.
	\]
\end{theorem}
\begin{proof}
	Let $n_0 = \min\{n \in \NN : n^{-1} \omega \in \calM\}$. We write
	\[
		G_{\varrho^{-1}} (x) = \sum_{n \in A} \varrho^{-n} p(n; x) + \sum_{n \in B} \varrho^{-n} p(n; x)
	\]
	where
	\[
		A = \Big\{n \in \NN : n \geq \norm{\omega}^{\frac{7}{4}} \Big\},
	\]
	and
	\[
		B = \Big\{n \in \NN : \norm{\omega}^{\frac{7}{4}} > n \geq n_0\Big\}.
	\]
	We first treat the sum over $B$. By \eqref{eq:15} and \eqref{eq:33},
	\begin{align*}
		\sum_{n \in B} \varrho^{-n} p(n; x) &\leq C \sum_{n \in B} e^{-2 c n^{-1} \norm{\omega}^2} \\
		&\leq C e^{-2 c \norm{\omega}^{\frac{1}{4}}} \norm{\omega}^{\frac{7}{4}} \\
		&\leq C e^{-c\norm{\omega}^\frac{1}{4}}.
	\end{align*}
	To the sum over $A$, we apply Corollary \ref{cor:1}. To justify its use, we observe that for $n \in A$,
	\[
		\frac{\norm{\omega}}{n} \leq \norm{\omega}^{-\frac{3}{4}}.
	\]
	Hence,
	\[
		\sum_{n \in A} \varrho^{-n} p(n; x) = P_\omega(0) \sum_{n \in A} n^{-\frac{r}{2} - \abs{\Phi^{++}}} 
		e^{-n\phi(n^{-1} \omega)} 
		\Big(C_0 + E_n\big(n^{-1} \omega\big) \Big)
	\]
	where
	\[
		\abs{E_n\big(n^{-1} \omega\big)} \leq C n^{-1} \big(\norm{\omega} + 1\big).
	\]
	Since for $n \in A$, 
	\[
		\abs{E_n\big(n^{-1} \omega\big)} \leq 2 C \norm{\omega}^{-\frac{3}{4}},
	\]
	it is enough to find the asymptotic of the sum
	\[
		\sum_{n \in A} n^{-\frac{r}{2}-\abs{\Phi^{++}}} e^{-n \phi(n^{-1} \omega)}. 
	\]
	To do so, let us introduce
	\[
		S_0(\omega) = \sum_{n \in A} n^{-\frac{r}{2}-\abs{\Phi^{++}}}
		\exp\Big\{-\tfrac{1}{2 n} B_0^{-1}(\omega,\omega) \Big\}.
	\]
	Because for $n \in A$,
	\begin{align*}
		\Big|
		n \phi\big(n^{-1} \omega\big) - \tfrac{1}{2 n} B_0^{-1}(\omega, \omega)
		\Big|
		\leq
		C
		\frac{\norm{\omega}^3}{n^2}
		\leq
		C \norm{\omega}^{-\frac{1}{2}},
	\end{align*}
	we see that
	\[
		\Big|
		\sum_{n \in A} n^{-\frac{r}{2}-\abs{\Psi^{++}}} e^{-n \phi(n^{-1} \omega)} - S_0(\omega)
		\Big|
		\leq
		C \norm{\omega}^{-\frac{1}{2}} S_0(\omega).
	\]
	By taking $n \in A$ and $n \leq t \leq n+1$, we can estimate
	\[
		\Big|
		\exp\Big\{-\tfrac{1}{2 n} B_0^{-1}(\omega, \omega)\Big\}
		-
		\exp\Big\{-\tfrac{1}{2 t} B_0^{-1}(\omega, \omega)\Big\}
		\Big|
		\leq
		C
		\norm{\omega}^{-\frac{3}{2}} \exp\Big\{-\tfrac{1}{2 t} B_0^{-1}(\omega, \omega)\Big\},
	\]
	and
	\[
		\big| n^{-\frac{r}{2}-\abs{\Phi^{++}}} - t^{-\frac{r}{2} - \abs{\Phi^{++}}} \big|
		\leq
		C
		t^{-\frac{r}{2} - \abs{\Phi^{++}}} \norm{\omega}^{-\frac{7}{4}},
	\]
	thus
	\begin{align*}
		&\bigg|
		S_0(\omega) - 
		\int_{\norm{\omega}^{-\frac{7}{4}}}^{\infty}
		t^{-\frac{r}{2}-\abs{\Phi^{++}}}
		\exp\Big\{-\tfrac{1}{2 t} B_0^{-1}(\omega, \omega) \Big\}
		{\: \rm d} t
		\Big| \\
		&\qquad\qquad
		\leq
		C \norm{\omega}^{-\frac{3}{2}}
		\int_{\norm{\omega}^{-\frac{7}{4}}}^\infty
		t^{-\frac{r}{2}-\abs{\Phi^{++}}}
		\exp\Big\{-\tfrac{1}{2 t} B_0^{-1}(\omega, \omega) \Big\}
		{\: \rm d} t.
	\end{align*}
	Finally, a straightforward computation shows that
	\[
		\int_{\norm{\omega}^{-\frac{7}{4}}}^\infty
		t^{-\frac{r}{2}-\abs{\Phi^{++}}}
		\exp\Big\{-\tfrac{1}{2 t} B_0^{-1}(\omega, \omega) \Big\}
        {\: \rm d} t
		=
		\big(B_0^{-1}(\omega, \omega)\big)^{-\frac{r}{2}-\abs{\Phi^{++}} + 1}
		\big(c_0 + o(1)\big)
	\]
	where
	\[
		c_0 = 2^{\frac{r}{2}+\abs{\Phi^{++}}-1} \Gamma\big(\tfrac{r}{2} + \abs{\Phi^{++}} - 1\big),
	\]
	which completes the proof.
\end{proof}

\appendix
\section{Asymptotic in the exceptional case}
\label{app:a}
In the appendix we indicate the necessary changes to the proof of Theorem \ref{thm:4} in the exceptional case, 
that is when $\tau_\alpha < 1$ for some $\alpha \in \Phi$. Then the root system $\Phi$ is $\text{BC}_r$ and
$q_r < q_0$. In view of the inversion formula \eqref{eq:81}, for $v_n \in V_{\omega_n}(O)$,
\begin{align*}
	p(n; v_n) 
	&= \bigg(\frac{1}{2\pi} \bigg)^r \frac{W_0(q^{-1})}{|W_0|} 
	\int_{U_0} \big(h_{i\theta}(A) \big)^n \overline{P_{\omega_n}(i\theta)} \frac{{\rm d} \theta}{|\bfc(i\theta)|^2} \\
	&\phantom{=}+
	\bigg(\frac{1}{2\pi}\bigg)^{r-1} \frac{W_0(q^{-1})}{|W_0'|}
	\int_{U_1} \big(h_{i\theta}(A) \big)^n \overline{P_{\omega_n}(i\theta)} \frac{{\rm d} \theta}{\phi_1(i\theta)}.
\end{align*}
Using $W_0$-invariance of the integrand and the definition of $P_{\omega_n}$ we can write
\begin{align*}
	p(n; v_n)
	&=\chi_0(\omega_n)^{-\frac{1}{2}} \bigg(\frac{1}{2\pi}\bigg)^r \int_{U_0} (h_{i\theta}(A))^n
	e^{-i\sprod{\theta}{\omega_n}} \frac{{\rm d}\theta}{\bfc{(i\theta)}} \\
	&\phantom{=}+
	\chi_0(\omega_n)^{-\frac{1}{2}}\bigg(\frac{1}{2\pi}\bigg)^{r-1} 
	\sum_{j = 1}^r \int_{U_j} (h_{i\theta}(A))^n e^{-\sprod{\theta}{\omega_n}} 
	\frac{{\rm d} \theta}{\tilde{\bfc}_j(i\theta)}
\end{align*}
where
\[
	\tilde{\bfc}_j(z_1, \ldots, z_{j-1}, -v, z_{j+1}, \ldots, z_r) = 
	\lim_{z_j \to -v} \frac{\bfc(z_1 e_1 + \ldots + z_r e_r)}{1 + b^{-1} e^{-z_j}}.
\]
For $u > 0$, if $u \neq -\log b$, we denote by $\gamma_u$ the line segment $u + i [-\pi/2, 3\pi/2]$, otherwise 
\[
	\gamma_u(t) = -\log b+
	\begin{cases}
		i t & \text{if } t \in [-\pi/2, \pi-\tau], \\
		i \pi - 
		\tau \exp\left( i\pi \left(1 - \frac{t-\pi}{2\tau}\right)\right) & \text{if } t \in (\pi-\tau, \pi + \tau],\\
		i t & \text{if } t \in (\pi+t, 3 \pi/2]
	\end{cases}
\]
where $0 < 4 \tau < \min\{-\log b, \pi\}$. Let
\[
	H = \left\{z \in \CC : \Re z > \min\{-\log q_1, -\log a\} \right\}.
\]
We set $s_n = s(\delta_n)$. Let $j \in \{1, \ldots, r\}$ and fix $z_1, \ldots, z_{j-1}, z_{j+1}, \ldots, z_r \in 
\{z \in H : |z + v| \geq \tau\}$. We consider the function
\begin{equation}
	\label{eq:82}
	H \ni z_j \mapsto (h_z(A))^n e^{-\sprod{z}{\omega_n}} \frac{1}{\bfc(z)}
\end{equation}
where $z = (z_1, \ldots, z_r)$. Since $b < 1$, the mapping \eqref{eq:82} is meromorphic in $H$ with a pole at $-v$. 
Moreover, it is $2\pi i$-periodic. Therefore, if $0 \leq s_{n; j} \leq -\log b$ then
\begin{align*}
	\int_{\gamma_0} (h_{z}(A))^n
	e^{-\sprod{z}{\omega_n}} 
	\frac{{\rm d} z_j}{\bfc(z)}
	=
	\int_{\gamma_{s_{n; j}}} (h_{z}(A))^n
	e^{-\sprod{z}{\omega_n}} \frac{{\rm d} z_j}{\bfc(z)},
\end{align*}
otherwise
\begin{equation}
	\label{eq:83}
	\begin{aligned}
	\int_{\gamma_0} (h_{z}(A))^n
	e^{-\sprod{z}{\omega_n}}
	\frac{{\rm d} z_j}{\bfc(z)}
	&=
	\int_{\gamma_{s_{n; j}}} (h_{z}(A))^n
	e^{-\sprod{z}{\omega_n}} \frac{{\rm d} z_j}{\bfc(z)} \\
	&\phantom{=}
	- 2\pi i
	\lim_{z_j \to -v} 
	(h_{z}(A))^n
	e^{-\sprod{z}{\omega_n}} \frac{1 + b^{-1} e^{-z_j}} {\bfc(z)}.
	\end{aligned}
\end{equation}
Since $q_1 > 1$, we must have $q_1 b\geq 1$, see \cite[Lemma 5.6]{park2} for details. Therefore, the second term
in \eqref{eq:82} as a function of $z_k$ is holomorphic function in $H$ for $k = 1, \ldots, j-1, j+1, \ldots, r$.
Hence, by repeated change of the contour of integration, we get
\begin{equation}
	\label{eq:84}
	\begin{aligned}
	p(n; v_n)
	&=
	\chi_0(\omega_n)^{-\frac{1}{2}} \bigg(\frac{1}{2\pi i}\bigg)^r
	\int_{\gamma_{s_{n; 1}}} \cdots \int_{\gamma_{s_{n; r}}} (h_z(A))^n
	e^{-\sprod{z}{\omega_n}}
	\frac{{\rm d} z}{\bfc(z)} \\
	&\phantom{=}+
	\chi_0(\omega_n)^{-\frac{1}{2}} \bigg(\frac{1}{2\pi}\bigg)^{r-1}
	\sum_{j: s_{n; j} \leq -\log b}
	\int_{U_j} (h_{\hat{s}_{n; j}+i\theta}(A))^n e^{-\sprod{\hat{s}_{n; j} + i\theta}{\omega_n}} 
	\frac{{\rm d} \theta}{\tilde{\bfc}_j(\hat{s}_{n; j} + i\theta)}
	\end{aligned}
\end{equation}
where $\hat{s}_{n; j}=(s_{n; 1}, \ldots, s_{n; j-1}, 0, s_{n; j+1}, \ldots, s_{n; r})$. 

Let us consider the first integral in \eqref{eq:84}. Select $\epsilon$ satisfying \eqref{eq:52} and
\eqref{eq:38} and let $U_\epsilon = [-\epsilon, \epsilon]^r$. Every $z \in \gamma_{s_{n; 1}} \times \ldots \times
\gamma_{s_{n; r}}$ can be written as $z = s_n + x + i \theta$ with $\norm{x} \leq \tau$ and $\theta \in U_0$. Hence, if
$\theta \in U_0 \setminus U_\epsilon$, by \eqref{eq:20} and Claim \ref{clm:2},
\begin{align*}
	1 - \bigg|\frac{\kappa(s_n+x+i\theta)}{\kappa(s_n)} \bigg|^2
	&=
	2 \sum_{v, v' \in \calV} \scoef{s_n+x}{v} \scoef{s_n+x}{v'} 
	\bigg(\sin\bigg\langle\frac{\theta}{2}, v-v'\bigg\rangle\bigg)^2 \\
	&\geq
	2 \xi
	\sum_{v, v' \in \calV}  \scoef{s_n+x}{v} \scoef{s_n+x}{v'}
	\geq
	C \cdot \scoef{s_n}{v'}.
\end{align*}
Thus, by Theorem \ref{thm:2},
\[
	\bigg|\frac{\kappa(s_n+x+i\theta)}{\kappa(s_n)} \bigg|^2
	\leq
	e^{-C \dist(\delta_n, \partial \calM)^{\eta}},
\]
and so
\[
	\frac{1}{i^r}
    \int_{\gamma_{s_{n; 1}}} \cdots \int_{\gamma_{s_{n; r}}} (h_z(A))^n
	e^{-\sprod{z}{\omega_n}}
	\frac{{\rm d} z}{\bfc(z)}
	=
	e^{-n \phi(\delta_n)} \big( F_n(s_n) + E_n(\delta_n)\big)
\]
where $F_n$ is given by the formula \eqref{eq:85}, and
\[
	|E_n(\delta_n)| \leq C \exp\left\{-C'n \dist(\delta_n, \partial \calM)^{\eta} \right\}.
\]
We can now repeat the reasoning from Theorem \ref{thm:4} to obtain the asymptotic of $F_n(s_n)$. Hence, it remains to show
that the second term in \eqref{eq:84} is negligible, that is
\begin{equation}
	\label{eq:86}
	\bigg|
	\int_{U_j}
	(h_{\hat{s}_{n; j}+i\theta}(A))^n e^{-\sprod{\hat{s}_{n; j} + i\theta}{\omega_n}}
	\frac{{\rm d} \theta}{\tilde{\bfc}_j(\hat{s}_{n; j} + i\theta)}
	\bigg| 
	\leq
	C \varrho^n e^{-n \phi(\delta_n)} \exp{-C' \dist(\delta_n, \partial \calM)^\eta}
\end{equation}
provided that $s_{n; j} \leq -\log b$. To do so we need the estimate on $P_\lambda(z)$ if $z_1 = v$. We
start with the following theorem.
\begin{theorem}
	\label{thm:5}
	Suppose that $b = \sqrt{q_r/q_0} < 1$. Then for each $\lambda \in P^+$ and $u+i\theta \in \CC^{r-1}$,
	\begin{equation}
		\label{eq:87}
		\big|
		P_{\lambda}(u+i\theta, v)
		\big|
		\leq
		P_{\lambda}(u, 0).
	\end{equation}
	where $v = \log b - i \pi$.
\end{theorem}
\begin{proof}
	Let us consider the simples case $r = 1$, that is a semi-homogeneous tree. Then for $\lambda = k \lambda_1$,
	$k \in \NN$,
	\[
		P_{\lambda}(z) = \frac{\chi_0(\lambda)^{-\frac{1}{2}}}{1+q_1^{-1}} 
		\big(
		e^{k z} c(z) + e^{-k z} c(-z)
		\big)
	\]
	where
	\[
		c(z) = \frac{(1-a^{-1} e^{-z})(1 + b^{-1} e^{-z})}{1 - e^{-2 z}}
	\]
	and $a = \sqrt{q_0 q_1}$, $b = \sqrt{q_1 / q_0}$. Hence,
	\[
		P_{\lambda}(v) = \frac{\chi_0(\lambda)^{-\frac{1}{2}}}{1+q^{-1}} (-b)^k \big(1+a^{-1} b^{-1}\big).
	\]
	Since
	\[
		P_{\lambda}(z) = \frac{\chi_0(\lambda)^{-\frac{1}{2}}}{1+q^{-1}} 
		\bigg(\frac{e^{(k+1)z}-e^{-(k+1)z}}{e^z - e^{-z}} 
		+ (b^{-1} - a^{-1}) \frac{e^{k z}-e^{-k z}}{e^z - e^{-z}} 
		+ \frac{e^{(k-1)z}-e^{-(k-1)z}}{e^z - e^{-z}}
		\bigg),
	\]
	we easily get
	\[
		P_{\lambda}(0) = \frac{\chi_0(\lambda)^{-\frac{1}{2}}}{1+q^{-1}} k \big(1 + b^{-1} - a^{-1}\big).
	\]
	Thus
	\[
		\big|P_{\lambda}(v)\big| 
		\leq 
		\frac{\chi_0(\lambda)^{-\frac{1}{2}}}{1+q^{-1}} \big(1 + a^{-1} b^{-1}\big) \\
		\leq 
		P_{\lambda}(0).
	\]
	For $r \geq 2$, we use the integral representation of Macdonald spherical functions. Namely, there is a measure $\nu$
	on the maximal boundary $\Omega$ of the affine building $\mathscr{X}$ such that for any $x \in V_{\lambda}(O)$,
	\begin{equation}
		\label{eq:89}
		P_\lambda(z) = \int_\Omega \chi_0\big(h(O, x; \omega)\big)^{\frac{1}{2}} e^{\sprod{z}{h(O, x; \omega)}} 
		\nu({\rm d} \omega)
	\end{equation}
	where $h(O, x; \omega)$ is the horocycle function, see \cite[Section 3]{park2}. Furthermore, in view of
	\cite[Section 4]{mz1}, we can decompose $\Omega$ as a disjoint union
	\[
		\Omega = \bigsqcup_{\eta \in \Omega_r} \partial \mathbb{T}_\eta
	\]
	where each $\partial \mathbb{T}_\eta$ denotes the maximal boundary of a semi-homogeneous tree with parameters 
	$(q_0, q_r)$. On $\Omega_r$ there is a probability measure $\mu_r$ such that
	\begin{align*}
		P_{\lambda}(z) = 
		\int_{\Omega_r} \int_{\partial \mathbf{T}_\eta} 
		\chi_0\big(h(O, x; \omega)\big)^{\frac{1}{2}} e^{\sprod{z}{h(O, x; \omega)}}
		\nu_\eta({\rm d} \omega) 
		\mu_r({\rm d} \eta)
	\end{align*}
	where $\nu_\eta$ is the probability measure on $\mathbb{T}_\eta$ determined by \eqref{eq:89}. For a fixed
	$\eta \in \Omega_r$ the mapping
	\[
		\partial \mathbb{T}_\eta \ni \omega \mapsto Q\big(h(O, x; \omega)\big) 
		= \sum_{j = 1}^{r-1} \sprod{h(O, x; \omega)} {e_j}
	\]
	is constant (see \cite[Proposition 4.13]{mz1}). Moreover,
	\[
		 \partial \mathbb{T}_\eta \ni \omega \mapsto \sprod{h(O, x; \omega)}{e_r}
	\]
	is the horocycle function between the projections of $O$ and $x$ onto $\mathbb{T}(\eta)$
	(see \cite[Proposition 4.13]{mz1}). Then by the first part of the proof we have
	\[
		\bigg|
		\int_{\partial \mathbb{T}_\eta} \chi_{0, r}\big(\sprod{h(O, x; \omega)}{e_r}\big)^{\frac{1}{2}} 
		(-b)^{\sprod{h(O, x; \omega)}{e_r}}
		\nu_\eta({\rm d} \omega)
		\bigg|
		\leq
		\int_{\partial \mathbb{T}_\eta} 
		\chi_{0, r}\big(\sprod{h(O, x; \omega)}{e_r}\big)^{\frac{1}{2}} \nu_\eta({\rm d} \omega)
	\]
	where we have set
	\[
		\chi_{0, r}(k) = \tau_{\alpha_r}^k \tau_{2\alpha_r}^{2k}, \qquad k \in \ZZ.
	\]
	Hence, 
	\[
		\big|P_{\lambda}(u+i\theta, v)\big|
		\leq
		\int_{\Omega_r} \int_{\partial \mathbb{T}_\eta} 
		\chi_0\big(h(O, x; \omega)\big)^{\frac{1}{2}} 
		e^{\sprod{(x, 0)}{h(O, x; \omega)}}
		\nu_\eta({\rm d} \omega) \: \mu_r({\rm d} \eta)
		=
		P_{\lambda}(u, 0)
	\]
	and the theorem follows.
\end{proof}
In the next step we improve the estimate \eqref{eq:87}.
\begin{theorem}
	\label{thm:8}
	Suppose that $b = \sqrt{q_r/q_0} < 1$. Then for each $\lambda \in P^+$ there is
	$c_\lambda > 0$ so that for all $u+i\theta \in \CC^{r-1}$,
	\[
		\big|P_\lambda(u+i\theta, v)\big| \leq P_\lambda(u, 0) - \frac{c_\lambda}{P_{\lambda}(u, 0)}.
	\]
	where $v = \log b - i \pi$.
\end{theorem}
\begin{proof}
	Let us first show that
	\begin{equation}
		\label{eq:88}
		\big|P_{\lambda_1}(u+ i\theta, v)\big| \leq P_{\lambda_1}(u, 0) - c_{\lambda_1}
	\end{equation}
	for some $c_{\lambda_1} > 0$. Indeed, by \cite[Lemma B.3.2]{park}
	\[
		P_{\lambda_1}(z) = \frac{1}{N_{\lambda_1}}
		\Big(a_1 + a_2 \sum_{j = 1}^r \big(e^{z_j} + e^{-z_j}\big)\Big)
	\]
	where $a_1 = (q_0 - 1)(1+q_1+\ldots + q_1^{r-1})$, $a_2 = \sqrt{q_0 q_r} q_1^{r-1}$. Thus
	\[
		P_{\lambda_1}(u+i\theta, v) = \frac{1}{N_{\lambda_1}}
		\bigg(
		a_1 - a_2 \frac{q_0 + q_r}{\sqrt{q_0 q_r}} + a_2 \sum_{j = 1}^{r-1} 
		\big(e^{u_j+i\theta_j} + e^{-u_j-i\theta_j}\big)
		\bigg).
	\]
	If
	\[
		a_1 - a_2 \frac{q_0+q_r}{\sqrt{q_0+q_r}} \geq 0, 
	\]
	then
	\begin{align*}
		\big|P_{\lambda_1}(u+i\theta, v)\big| 
		&\leq 
		\frac{1}{N_{\lambda_1}}
		\bigg(a_1 - a_2 \frac{q_0 + q_r}{\sqrt{q_0 q_r}} + a_2 \sum_{j=1}^{r-1}
		\big(e^{u_j} + e^{-u_j}\big)\bigg)\\ 
		&= 
		P_{\lambda_1}(u, 0) - 2 a_2 - a_2 \frac{q_0 + q_r}{\sqrt{q_0 q_r}}.
	\end{align*}
	Otherwise, by \cite[Theorem B.3.3]{park},
	\[
		a_1 - a_2\frac{q_0+q_r}{\sqrt{q_0+q_r}} > -a_1 -2a_2,
	\]
	thus
	\begin{align*}
		\big|P_{\lambda_1}(u+i\theta, v)\big| 
		&\leq \frac{1}{N_{\lambda_1}}\bigg(a_2 \frac{q_0+q_r}{\sqrt{q_0+q_r}} - a_1 + a_2 \sum_{j = 1}^{r-1} 
		\big(e^{u_j} + e^{-u_j}\big)\bigg)\\
		&\leq P_{\lambda_1}(u, 0) - 2a_1 - 2 a_2 + a_2 \frac{q_0+q_r}{\sqrt{q_0+q_r}}.
	\end{align*}
	Now, by the triangularity condition for Macdonald spherical functions, and estimates \eqref{eq:87} and Theorem \ref{thm:5},
	we get
	\begin{align*}
		\big|P_\lambda(u+i\theta, v)\big|^2 
		&\leq \sum_{\mu \in P^+} a_{\lambda, \lambda; \mu} \big|P_\mu(u+ i\theta, v) \big| \\
		&\leq \sum_{\mu \in P^+} a_{\lambda, \lambda; \mu} P_\mu(u, 0) - a_{\lambda, \lambda; \lambda_1} c_{\lambda_1} \\
		&=(P_\lambda(u, 0))^2 - a_{\lambda, \lambda; \lambda_1} c_{\lambda_1},
	\end{align*}
	which completes the proof because by \cite[Lemma B.3.4]{park} we have $a_{\lambda, \lambda; \lambda_1} > 0$.
\end{proof}

We return to proving \eqref{eq:86}. Since the random walk has finite range, by Theorem \ref{thm:8}, we easily get
\[
	\big|h_{(v, x+i\theta)}(A) \big| \leq h_{(0, x)}(A) - \frac{c}{h_{(0, x)}(A)}.
\]
Using $W_0$-invariance, for any $t \in \RR$ we have
\[
	h_{(t, x)}(A) = \tfrac{1}{2} 
	\sum_{\mu \in \calV} c_\mu \big(e^{t \sprod{e_1}{\mu}}+e^{-t\sprod{e_1}{\mu}}\big) e^{\sprod{x}{\mu}}
	\geq 
	\sum_{\mu \in \calV} c_\mu e^{\sprod{x}{\mu}} = h_{(0, x)}(A).
\]
Therefore, for $\theta \in U_j$,
\[
	\big|h_{(\hat{s}_{n; j} +i \theta)}(A) \big| \leq h_{s_n}(A)\bigg(1 - \frac{c}{h_{s_n}^2(A)}\bigg),
\]
thus, by Theorem \ref{thm:2},
\begin{align*}
	\big|h_{(\hat{s}_{n; j} +i \theta)}(A) \big|^n
	&\leq (h_{s_n}(A))^n \exp\left(-c \frac{n}{h_{s_n}^2(A)} \right) \\
	&\leq (h_{s_n}(A))^n \exp\left\{-C' n \dist(\delta_n, \partial M)^{2\eta}\right\}.
\end{align*}
Since
\[
	(h_{s_n}(A))^n e^{-\sprod{\hat{s}_{n; j}}{\omega_n}}
	\leq \varrho^n e^{-n \phi(\delta_n)} e^{\log b \sprod{e_j}{\omega_n}},
\]
we obtain \eqref{eq:86}. This completes the proof of Theorem \ref{thm:3} in the exceptional case.

\begin{bibliography}{buildings}
\bibliographystyle{plain}
\end{bibliography}
\end{document}